\documentclass[11pt, leqno]{amsart}
\usepackage{amsmath, amssymb, latexsym}
\usepackage{mathrsfs}
\usepackage{enumerate}
\usepackage{color}
\usepackage[colorlinks=true, pdfstartview=FitV, linkcolor=blue,%
citecolor=blue, urlcolor=blue]{hyperref}
\usepackage[all, knot]{xy}

\numberwithin{equation}{section}

\newtheorem{theorem}{Theorem}[section]
\newtheorem{corollary}[theorem]{Corollary}
\newtheorem{lemma}[theorem]{Lemma}
\newtheorem{proposition}[theorem]{Proposition}

\theoremstyle{definition}
\newtheorem{definition}[theorem]{Definition}
\newtheorem{remark}[theorem]{Remark}

\newcommand{\Hom}{\operatorname{Hom}}

\newcommand{\C}{\mathbb{C}}

\newcommand{\Q}{\mathbb{Q}}
\newcommand{\N}{\mathbb{N}}
\newcommand{\Z}{\mathbb{Z}}
\newcommand{\h}{\mathfrak{h}}
\newcommand{\g}{\mathfrak{g}}
\newcommand{\R}{\mathscr{R}}
\newcommand{\U}{U_{q}(\mathfrak{g})}

\title[quantum Borcherds-Bozec superalgebras]
{quantum Borcherds-Bozec superalgebras}

\author[Zhaobing Fan]{Zhaobing Fan}
\address{Harbin Engineering University,
	Harbin, China}
\email{fanzhaobing@hrbeu.edu.cn}

\author[Jiaqi Huang]{Jiaqi Huang}
\address{Harbin Engineering University,
	Harbin, China}
\email{jiaqihuang@hrbeu.edu.cn}
\thanks{}

\keywords{quantum Borcherds-Bozec superalgebra, highest weight module, character formula}

\subjclass[2010] {17B37, 17B67, 16G20}

\begin{document}

\begin{abstract}
 We introduce quantum Borcherds-Bozec superalgebras. We present and prove various results of the quantum superalgebras including a bilinear form, higher Serre relation, quasi-R-matrix,
 character formula for the irreducible highest weight modules. We also prove the category of integrable
 representations is semi-simple.
\end{abstract}

\maketitle

\section*{Introduction}

In \cite{Bor88}, R. E. Borcherds introduced {\it Borcherds algebras} associated with Borcherds-Cartan matrices.
The diagonal entries of these matrices may be non-positive, which means the Borcherds algebras
may have simple roots with the norm $\le 0$, the {\it imaginary} simple roots.
Hence the index set $I$ has a partition $I = I^{\text{re}} \sqcup I^{\text{im}}$,
where  $I^{\text{re}} = \{i \in I \mid a_{ii} = 2\}$,
$I^{\text{im}} = \{i \in I \mid a_{ii} \le 0\}$.
Borcherds algebras
played an important role in Borcherds' proof of the   famous {\it  Moonshine conjecture} \cite{Bor92}.
\vskip 2mm
 T.Bozec  introduced the {\it quantum Borcherds-Bozec algebras} in his geometric investigation of the representation theory of quivers with loops \cite{Bozec2014a,Bozec2014b}. By using simple perverse sheaves on the representation varieties of such quivers, he provided a construction of Lusztig＊s canonical basis for the positive half of a quantum Borcherds-Bozec algebra.
The Cartan matrices for quantum Borcherds-Bozec algebras are the same for quantum Borcherds algebras, but they have higher degree simple root vectors.
The index set for these simple root vectors is denoted by
$I^{\infty}=(I^{\text{re}}\times \{1\}) \sqcup (I^{\text{im}}\times \Z_{\textgreater 0})$.
The Weyl-Kac type character formula for the irreducible highest weight modules was proved in \cite{BSV2016}.
In \cite{FKKT21,FKRT21,FHKX}, Fan and his collaborators have studied the global bases and crystal bases of quantum Borcherds-Bozec algebras.
 Kang, Kim and Tong \cite{KKT} categorified the quantum Borcherds-Bozec algebra  for an arbitrary
 Borcherds-Cartan datum by constructing their associated Khovanov-Lauda-Rouquier algebras and they showed that the cyclotomic Khovanov-Lauda-Rouquier
 algebras provide a categorification of the irreducible highest weight modules.
\vskip 2mm
On the other hand, quantum superalgebras have developed rapidly, particularly with remarkable breakthroughs in structural analysis, representation theory and cross-disciplinary applications.
 In \cite{BKM}, Benkart-Kang-Melville gave the definition of quantum Borcherds superalgebras. In \cite{CHW},  S. Clark, D. Hill, and W. Wang introduced a quantum
 covering and super groups with no isotropic odd simple root and provided substantial research achievements.
In \cite{CHW2}, S. Clark, Z. Fan, Y. Li and W. Wang showed that the modified
 quantum group and supergroup are isomorphic over the rational function field adjoined
 with $\sqrt{-1}$, by constructing a twistor on the modified covering quantum group. We proved the
Weyl-Kac type character formula for the irreducible highest
weight modules with dominant integral highest weights of Borcherds-Bozec superalgebras\cite{FHKS}.
 \vskip 2mm
The goal of this paper is to lay the foundations  for the theory of quantum Borcherds-Bozec superalgebras . We define quantum Borcherds-Bozec superalgebras.
We prove various
 structural results of the quantum superalgebras including a bilinear form,integral forms, bar-involution, quantum higher Serre relation, quasi-R-matrix, Casimir, characters for
 integrable modules, we also investigate representation theory, we prove that integrable modules of this quantum superalgebras are completely reducible.
\vskip 2mm
Quantum Borcherds-Bozec superalgebras are a generalization of the quantum
supergroups proposed by Wang and his collaborators in a series of papers. Compared with the theory of quantum supergroups with no isotropic odd simple root,
 the difficulties in quantum Borcherds-Bozec superalgebras come from the countable infinite
 index set, the imaginary roots and the superalgebra structure. These cause the higher Serre relations in  our algebras to be more complicated and when we aim to show that the highest weight module in the category $O_{int}$ is simple, we can not follow the methods provided by Lusztig or Kang, instead we present a new approach.
When Bozec introduced quantum Borcherds-Bozec algebras, he provided Chevalley generators and primitive generators. Unlike  Chevalley generators, primitive generators have a more concise coproduct. In this paper, we adopt primitive generators to facilitate computations and simplify the coproduct and generating relations.

\vskip 2mm
This paper is organized as follows. In Section 1, We give an description of the radical of Lusztig's bilinear form.  In Section 2, we introduce the quantum Borcherds-Bozec superalgebra and investigate the property of the highest weight modules. In Section 3, we prove that the quantum Borcherds-Bozec superalgebras have a triangular
 decomposition.  In Section 4, we formulate the quasi-R-matrix and
 establish its basic properties.  We construct the quantum Casimir. Finally, we prove that the category $O_{int}$ is semisimple and give the character formula for the simple objects of $O_{int}$.
 \vskip 2mm
 This paper lays the foundation for further studies of quantum Borcherds-Bozec superalgebras. In our next  paper, we will construct  the crystal basis of quantum Borcherds-Bozec superalgebras and of integral modules.
\vskip 2mm
{\it Acknowledgements.} Z.Fan was partially supported by the NSF of China grant $12271120$.
 We would like to express our sincere gratitude to Professor Seok-Jin
 Kang for his helpful discussions.
\vskip 5mm

\section{The Algebra F}

Let $I$ be an index set which can be countably infinite.
An integer-valued matrix $A=(a_{ij})_{i,j \in I}$ is called  a {\it
symmetrizable Borcherds-Cartan matrix} if it satisfies the following conditions:
\begin{itemize}
\item[(i)] $a_{ii}=2, 0, -2, -4, ...$,

\item[(ii)] $a_{ij}\le 0$ for $i \neq j$,

\item[(iii)] there exists a diagonal matrix $D=\text{diag} (d_{i} \in \Z_{>0} \mid i \in I)$ such that $DA$ is symmetric.
\end{itemize}

\vskip 2mm

Set $I^{\text{re}}=\{i \in I \mid a_{ii}=2 \}$,
$I^{\text{im}}=\{i \in I \mid a_{ii} \le 0\}$ ,
$I^{\text{iso}}=\{i \in I \mid a_{ii}=0 \}$ and $I^{\infty}:= (I^{\text{re}} \times \{1\}) \sqcup (I^{\text{im}}
\times \Z_{>0})$. For simplicity, we will often write $i$ for $(i,1)$.

\vskip 2mm

{ An {\it super Borcherds-Cartan datum}} consists of :

\begin{itemize}

\item[(a)] a symmetrizable Borcherds-Cartan matrix $A=(a_{ij})_{i,j \in I}$,

\item[(b)] a free abelian group $P$, the {\it weight lattice},

\item[(c)] $P^{\vee} := \Hom(P, \Z)$, the {\it dual weight lattice},

\item[(d)] $\Pi=\{\alpha_{i} \in P  \mid i \in I \}$, the set of {\it simple roots},

\item[(e)] $\Pi^{\vee}=\{h_i \in P^{\vee} \mid i \in I \}$, the set of {\it simple coroots},

\item[(f)]  $I=I_{\bar{0}}\sqcup I_{\bar{1}}$,

\end{itemize}

\noindent satisfying the following conditions

\begin{itemize}

\item[(i)] $\langle h_i, \alpha_j \rangle = a_{ij}$ for all $i, j \in I$,

\item[(ii)] $\Pi$ is linearly independent over $\C$,

\item[(iii)] { for all $i \in I_{\bar{1}}$, we have $a_{ij} \in 2 \Z $},

\item[(iv)] for each $i \in I$, there exists an element $\Lambda_{i} \in P$ such that
$$\langle h_j , \Lambda_i \rangle = \delta_{ij} \ \ \ \text{for all} \ i, j \in I,$$

\end{itemize}

\vskip 2mm

{ We denote $I_{\bar{0}} = I \setminus I_{\bar{1}}$, the set of {\it even indices}}.
Given  a symmetrizable Borcherds-Cartan matrix, such
an super Borcherds-Cartan datum always exists, which
is not necessarily unique. The $\Lambda_i$  $(i \in I)$ are called the {\it fundamental weights}.

An super Borcherds-Cartan datum is called bar-consistent if it satisfies
$$d_{i}\equiv p(i)\pmod{2}, \ \forall i\in I,$$
where $p$ is a parity function $p:I\rightarrow \{0,1\}$ such that $p(I_{\bar{1}})=1,p(I_{\bar{0}})=0$.
\vskip 2mm

We define
\begin{equation} \label{eq:P}
P^{+}:=\{\lambda \in P \mid \langle h_i, \lambda \rangle \ge 0 \ \text{for} \ i \in I,
\ \ \langle h_{i}, \lambda \rangle \in 2 \Z \ \text{for} \ i \in I^{\text{re}} \cap I_{\bar{1}}  \}.
\end{equation}
The elements in $P^{+}$ are called the {\it dominant integral weights}.

\vskip 2mm

The free abelian group $Q:= \bigoplus_{i \in I} \Z \, \alpha_i$ is called the {\it root lattice}.
Set $Q_{+}: = \sum_{i \in I} \Z_{\ge 0}\, \alpha_{i}$ and $Q_{-}: = -Q_{+}$.
For $\beta = \sum k_i \alpha_i \in Q\textbf{}_{+}$, we define its {\it height} to be
{ $\text{ht} (\beta):=\sum k_i$}.

\vskip 2mm

Let ${\mathfrak h} := \C \otimes_{\Z} P^{\vee}$ be the {\it Cartan subalgebra}.
We define
a partial order on ${\mathfrak h}^{*}$ by setting $\lambda \ge \mu$
if and only if $\lambda - \mu  \in Q_{+}$ for $\lambda, \mu \in {\mathfrak h}^{*}$.

\vskip 2mm

 We decompose
${\mathfrak h}= \bigoplus_{i \in I} \C \, h_{i}$ .
Since $A$ is symmetrizable, we can define
a symmetric bilinear form $( \ \, , \ \, )$ on ${\mathfrak h}$ by
\begin{equation} \label{eq:h-bilinear}
\begin{aligned}
& (h, h_i) = d_{i}^{-1} \langle h, \alpha_ {i} \rangle \ \ \text{for} \ h \in {\mathfrak h}, \\
& (h', h'') = 0 \  \ \text{for} \ h', h'' \in {\mathfrak h}''.
\end{aligned}
\end{equation}

{In particular}, we have
$$(h_i, h_j) = d_{j}^{-1} a_{ij} = d_{i}^{-1} a_{ji} = (h_j, h_i).$$

\vskip 2mm

Moreover, since $\Pi$ is linearly independent,
it is straightforward to verify that $( \ \, , \ \, )$ is non-degenerate on ${\mathfrak h}$.
Hence there is a linear isomorphism $\nu: \mathfrak{h} \rightarrow \mathfrak{h}^{*}$
defined by
\begin{equation} \label{eq:linear_iso}
\nu(h)(h') = (h, h') \ \ \text{for} \  h, h' \in {\mathfrak h},
\end{equation}
{ which implies}
$$\nu(h_i) = d_{i}^{-1} \alpha_{i}, \ \ \nu^{-1}(\alpha_i) = d_i h_i.$$

Then we obtain the induced non-degenerate symmetric bilinear for $(\ \, , \ \, )$
on ${\mathfrak h}^{*}$ defined by
\begin{equation} \label{eq:hstar}
(\lambda, \mu): = (\nu^{-1}(\lambda), \nu^{-1}(\mu)) \ \ \text{for} \ \lambda, \mu \in {\mathfrak h}^{*}.
\end{equation}
In particular, we have
\begin{equation} \label{eq:alpha_i}
(\alpha_i, \lambda) = d_{i} \langle h_{i}, \lambda \rangle \ \ \text{for} \ i \in I.
\end{equation}

\vskip 2mm

For each  $i \in I^{\text{re}}$, we  define the {\it simple reflection}
$r_{i}:{\mathfrak h}^{*} \rightarrow {\mathfrak h}^{*}$ by
\begin{equation} \label{eq:simple_reflection}
r_{i}(\lambda)= \lambda - \langle h_{i}, \lambda \rangle \,  \alpha_{i}
\ \ \text{for} \ \lambda \in {\mathfrak h}^{*}.
\end{equation}

The subgroup $W$ of $GL({\mathfrak h}^{*})$ generated by the simple reflections $r_{i}$ $(i \in I^{\text{re}})$ is called the {\it Weyl group} of the super Borcherds-Cartan datum given above.
Similar to \cite{FKRT21} , \  $(\ \,  , \ \,  )$ is $W$-invariant.

\vskip 2mm

For the rest of the paper, we fix a linear functional $\rho \in {\mathfrak h}^{*}$ satisfying
\begin{equation} \label{eq:rho}
(\rho, \alpha_{i}) = \ \dfrac{1}{2} (\alpha_{i}, \alpha_{i}) \ \ \text{for all} \ i \in I.
\end{equation}

For any $i\in I$, we set
$$q_{i}=q^{d_{i}},\ q_{(i)}=q^{\frac{(\alpha_{i},\alpha_{i})}{2}}$$
\vskip 3mm

Let \(F\) be the free associative \(\mathbb{Q}(q)\)-superalgebra generated by the unit \(1\) and elements \(a_{il}\), where \((i, l) \in I^{\infty}\). We endow \(F\) with a \emph{parity grading} (i.e., a \(\mathbb{Z}_2\)-grading) defined on generators by \(p(a_{il}) = l\,p(i) \pmod{2}\) and a \emph{weight grading} by the root lattice, given by \(\operatorname{wt}(a_{il}) = l\alpha_i\). For a homogeneous element \(x \in F\), we denote its weight by \(|x|\).

The tensor product \(F \otimes F\) is made into a \(\Q(q)\)-superalgebra via the multiplication
\[
(x_1 \otimes x_2)(x_3 \otimes x_4) = (-1)^{p(x_2)p(x_3)} q^{(|x_2|,|x_3|)} \, x_1x_3 \otimes x_2x_4.
\]

Throughout this paper, we assume that all elements appearing in displayed formulas are homogeneous with respect to the \(\mathbb{N}[I] \times \mathbb{Z}_2\)-bigrading.

\vskip 3mm
There is similar multiplication formula in $F\otimes F\otimes F$

\begin{align*}
&(x_{1}\otimes x_{2}\otimes x_{3})(x_{1}'\otimes x_{2}'\otimes x_{3}')\\
=&q^{(|x_2|,|x_1'|)+(|x_3|,|x_2'|)+(|x_3|,|x_1'|)}(-1)^{p(x_2)p(x_1') + p(x_3)p(x_2')+p(x_3)p(x_1')}x_1x_1' \otimes x_2x_2' \otimes x_3x_3'.
\end{align*}
\vskip 3mm
We endow $F$ with a map $\varrho: F\to F\otimes F$ defined by
$$\varrho(a_{il})=a_{il}\otimes 1+1\otimes a_{il}.$$

\vskip 3mm
\begin{proposition}\label{1.1}
There exists a unique bilinear form $(\cdot,\cdot): F \times F \to \Q(q)$ such that $(1,1)=1$ and the following properties hold for all homogeneous elements:
\begin{itemize}
    \item[(a)] $\displaystyle (a_{il},a_{jk}) = \delta_{ij}\delta_{lk} \bigl(1-(-1)^{p(li)}q_{i}^{2l}\bigr)^{-1}$, \quad $\forall(i,l),(j,k) \in I^{\infty}$;
    \item[(b)] $\displaystyle (x, yy') = \bigl(\varrho(x),\, y \otimes y'\bigr)$, \quad $\forall x,y,y' \in F$;
    \item[(c)] $\displaystyle (xx', y) = \bigl(x \otimes x',\, \varrho(y)\bigr)$, \quad $\forall x,x',y \in F$.
\end{itemize}
The same symbol $(\cdot,\cdot)$ will be used to denote the induced bilinear form on $F \otimes F$, defined by
\[
(x_1 \otimes x_2,\; x_3 \otimes x_4) = (x_1,x_3)(x_2,x_4).
\]
Moreover, the bilinear form on $F$ is symmetric.
\end{proposition}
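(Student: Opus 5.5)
Our plan follows Lusztig's construction (Lusztig, \emph{Introduction to quantum groups}, 1.2.3), adapted to the super sign and to the higher generators $a_{il}$. First we check that $\varrho$ extends to a superalgebra homomorphism $F\to F\otimes F$. Since $F$ is free on the $a_{il}$, it suffices to assign the images of the generators; the twisted multiplication on $F\otimes F$ is associative, so $\varrho$ extends uniquely. We also check coassociativity, $(\varrho\otimes 1)\varrho=(1\otimes\varrho)\varrho$. Both sides are homomorphisms into $F\otimes F\otimes F$ with the triple multiplication displayed above, and they agree on generators. Both preserve the weight and parity gradings.

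Next we build the form weight by weight. For $\nu\in Q_+$ let $F_\nu$ be the weight space. It is finite dimensional: $\nu$ has finitely many nonzero coordinates, and $l\le$ the coefficient of $\alpha_i$, so only finitely many monomials of weight $\nu$ exist. We declare $F_\nu\perp F_\mu$ for $\nu\ne\mu$. The form is then forced by (c) and (a): for a monomial $x=a_{i_1l_1}\cdots a_{i_nl_n}$ and any $y$, apply (c) with $x'=a_{i_nl_n}$ to get $(x,y)=(x''\otimes a_{i_nl_n},\varrho(y))$. Only components of $\varrho(y)$ lying in $F_{\nu-l_n\alpha_{i_n}}\otimes F_{l_n\alpha_{i_n}}$ contribute, and $F_{l\alpha_i}$ involves shorter products when $l>1$. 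So we define $(x,y)$ recursively on $\operatorname{ht}$. Concretely, we define $r_{il},{}_{il}r:F\to F$ by
\[
\varrho(y)=\sum (\text{stuff})\otimes a_{il}+\dots, \qquad \varrho(y)= a_{il}\otimes(\text{stuff})+\dots,
\]
but it is cleaner to do the following. On $F_\nu$ we set $(x,y)$ for monomials $x$ by induction on the number $n$ of letters in $x$. For $n=1$, $x=a_{il}$ and $F_{l\alpha_i}$ has its own basis; here we define $(a_{il},y)$ via (a) when $y=a_{il}$, and via (c) applied to $y$ (when $y$ is a product) using the already-defined form on lower weights. In general we define the form by (c) with $x=x''a_{jk}$; the ambiguity from different factorizations disappears because we fix the last letter, and this choice also yields uniqueness. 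Property (c) then holds for $x'$ a single generator, and by associativity of $F$ and coassociativity of $\varrho$ it holds for arbitrary $x'$. Here we use the identity $((x\otimes x')\otimes x'',(\varrho\otimes1)\varrho(y))=(x\otimes(x'\otimes x''),(1\otimes\varrho)\varrho(y))$. This identity needs the form on $F\otimes F$ to be compatible with the multiplication twists. The twists cancel because the form vanishes off equal weights, so the factors $q^{(|x_2|,|x_3|)}$ and signs $(-1)^{p(x_2)p(x_3)}$ appear identically on both sides.

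The main obstacle is proving (b) and symmetry simultaneously. We would follow Lusztig: define the transposed form $\{x,y\}:=(y,x)$. We then show by induction on height that $\{\cdot,\cdot\}$ also satisfies (a) and (c), and appeal to uniqueness to get $\{\ ,\ \}=(\ ,\ )$, which gives both symmetry and (b). To show $\{\ ,\ \}$ satisfies (c), i.e. $(y,xx')=(\varrho(y),x\otimes x')$, we induct on the length of $y$. We write $y=y'a_{jk}$, expand $(y'a_{jk},xx')=(y'\otimes a_{jk},\varrho(x)\varrho(x'))$, and compare with $(\varrho(y')\varrho(a_{jk}),x\otimes x')$ using the inductive hypothesis on $y'$. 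Matching the four terms requires that the sign $(-1)^{p(x_2)p(x_3)}$ and the power of $q$ are symmetric in the two factors. They are, since $(\cdot,\cdot)$ on weights is symmetric and the parity pairing is symmetric. Term-matching is the delicate bookkeeping step, and we expect it to be where the care is needed. Finally, the normalization $(a_{il},a_{il})=(1-(-1)^{p(li)}q_i^{2l})^{-1}$ is not fixed by the recursion and is imposed directly at the base step. We must verify there is no conflict: $a_{il}$ is a free generator, not a product, so (b) and (c) never constrain it against itself, and the form on $F_{l\alpha_i}$ is consistent.
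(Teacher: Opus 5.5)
Your strategy is the paper's argument transposed. The paper makes the graded dual $F^*$ an algebra by transposing $\varrho$, takes the algebra map $\phi\colon F\to F^*$ with $\phi(a_{il})=\xi_{il}$, and sets $(x,y)=\phi(y)(x)$. This makes (b) automatic, and (c) is then proved by induction on $y$ via the four-term comparison of $q$-powers and signs. Your last-letter recursion is the same $\phi$ applied to the first argument; your coassociativity step is exactly what makes $F^*$ associative. So (c) is automatic for you, and ``$\{\cdot,\cdot\}$ satisfies (c)'' is verbatim (b) for $(\cdot,\cdot)$, proved by the same induction. That part is fine. The twists enter only in that induction: pairing against $(\varrho\otimes 1)\varrho(y)=(1\otimes\varrho)\varrho(y)$ is a plain vector-space identity.

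The gap is in the base step, which is exactly where the Bozec setting departs from Lusztig's. For $i\in I^{\text{im}}$ and $l\ge 2$, the space $F_{l\alpha_i}$ also contains monomials of length $\ge 2$, and you must specify $(a_{il},z)$ for them. Property (c) cannot do this, since it only decomposes the first argument. Your closing remark that (b) and (c) impose no constraint here is also not right: for $z',z''$ of positive weight, (b) forces $(a_{il},z'z'')=(a_{il},z')(1,z'')+(1,z')(a_{il},z'')=0$. So you must \emph{define} $(a_{il},\cdot)$ to vanish on all monomials of length $\ge 2$; the paper does this implicitly in its choice of $\xi_{il}$. This is precisely the base case $y=a_{jk}$ of your induction for (b), which fails for any other choice.

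Relatedly, (a) and (c) alone do not determine the form. Running your recursion with $(a_{i2},a_{i1}^{2})=t$ arbitrary yields a one-parameter family satisfying $(1,1)=1$, (a) and (c). Hence the uniqueness you invoke for symmetry must be uniqueness with respect to (a), (b), (c) jointly, which is available once (b) is proved. The transposed form satisfies all three, because (a) is symmetric and transposition interchanges (b) and (c). With this repair, your symmetry argument is actually more complete than the paper's one-line remark.
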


\begin{proof}
We equip the graded dual $F^* = \bigoplus_\nu F_\nu^*$ with an associative algebra structure by transposing the coproduct $\varrho: F \to F \otimes F$. Concretely, for $f,g\in F^*$ we set
\[
(fg)(x) = (f \otimes g)(\varrho(x)), \qquad \forall x \in F.
\]

For each pair $(i,l) \in I^{\infty}$, let $\xi_{il}\in F^*$ be the linear map determined by
\[
\xi_{il}(a_{jk}) = \delta_{ij}\delta_{lk}\bigl(1-(-1)^{p(li)}q_i^{2l}\bigr)^{-1}.
\]
Let $\phi: F \to F^*$ be the unique algebra homomorphism satisfying $\phi(a_{il}) = \xi_{il}$ for all $(i,l)\in I^{\infty}$. The map $\phi$ preserves the $\mathbb{N}[I]\times\mathbb{Z}_2$-grading.

Define a bilinear form on $F$ by
\[
(x,y) = \phi(y)(x), \qquad \forall x,y\in F.
\]
 (a) follows immediately from the definition of $\xi_{il}$. Because $\phi$ is an algebra homomorphism, we have for any $x,y,y'\in F$
\[
(x, yy') = \phi(yy')(x) = (\phi(y)\phi(y'))(x) = (\phi(y)\otimes \phi(y'))(\varrho(x)) = (\varrho(x),\, y\otimes y'),
\]
which is exactly  (b). Clearly, the bilinear form satisfies
\begin{itemize}
    \item[(d)] $(x,y)=0$ whenever $x$ and $y$ are homogeneous with $|x|\neq |y|$.
\end{itemize}

It remains to show (c) holds. We prove it by induction on the weight of $y$.  Assume that (c) is true for $y$ and $y'$ and for all $x,x'$. We  prove that (c) holds for t $y'' = yy'$ and any $x,x'$.
Write
\[
\varrho(x) = \sum x_1\otimes x_2, \quad \varrho(x') = \sum x_1'\otimes x_2', \quad
\varrho(y) = \sum y_1\otimes y_2, \quad \varrho(y') = \sum y_1'\otimes y_2'.
\]
Then
\begin{align*}
\varrho(xx') &= \sum q^{(|x_2|,|x_1'|)} (-1)^{p(x_2)p(x_1')} \, x_1x_1' \otimes x_2x_2', \\
\varrho(yy') &= \sum q^{(|y_2|,|y_1'|)} (-1)^{p(y_2)p(y_1')} \, y_1y_1' \otimes y_2y_2'.
\end{align*}
By the definition and the induction hypothesis, we have
\begin{align*}
(xx', yy') &= (\phi(y)\phi(y'))(xx')
            = (\phi(y)\otimes \phi(y'))(\varrho(xx')) \\
           &= \sum q^{(|x_2|,|x_1'|)} (-1)^{p(x_2)p(x_1')}
              (x_1x_1', y) (x_2x_2', y') \\
           &= \sum q^{(|x_2|,|x_1'|)} (-1)^{p(x_2)p(x_1')}
              \bigl(x_1\otimes x_1', \varrho(y)\bigr)
              \bigl(x_2\otimes x_2', \varrho(y')\bigr) \\
           &= \sum q^{(|x_2|,|x_1'|)} (-1)^{p(x_2)p(x_1')}
              (x_1, y_1)(x_1', y_2) (x_2, y_1')(x_2', y_2').
\end{align*}
On the other hand,
\begin{align*}
\bigl(x\otimes x', \varrho(yy')\bigr)
    &= \sum q^{(|y_2|,|y_1'|)} (-1)^{p(y_2)p(y_1')}
       (x\otimes x',\; y_1y_1'\otimes y_2y_2') \\
    &= \sum q^{(|y_2|,|y_1'|)} (-1)^{p(y_2)p(y_1')}
       (x, y_1y_1') (x', y_2y_2') \\
    &= \sum q^{(|y_2|,|y_1'|)} (-1)^{p(y_2)p(y_1')}
       \bigl(\varrho(x), y_1\otimes y_1'\bigr)
       \bigl(\varrho(x'), y_2\otimes y_2'\bigr) \\
    &= \sum q^{(|y_2|,|y_1'|)} (-1)^{p(y_2)p(y_1')}
       (x_1, y_1)(x_1', y_2) (x_2, y_1')(x_2', y_2').
\end{align*}
By condition (d), nonzero terms in the two sums must satisfy
\[
|x_1'| = |y_2|,\quad |x_2| = |y_1'|, \qquad
p(x_1') = p(y_2), \quad p(x_2) = p(y_1').
\]
Under these equalities, the factors involving $q$ and the signs coincide, hence the two sums are equal. Thus (c) holds for $y'' = yy'$, completing the induction.

Finally, symmetry of the form follows from $(a),(c)$ and the fact that the generators $a_{il}$ are mutually orthogonal with symmetric values. This finishes the proof.
\end{proof}

\vskip 3mm

Let $\mathscr{R}$ be the radical of the bilinear form $(\cdot,\cdot)$, it is a two-sided ideal of $F$. Set $U^{+}=F/\mathscr{R}$. The weight decomposition of $F$ induces a decomposition
\[
U^{+}=\bigoplus_{\nu} U_{\nu}^{+},
\]
where $U_{\nu}^{+}$ is the image of $F_{\nu}$, and each weight space is finite dimensional. The bilinear form on $F$ descends to a bilinear form on $U^{+}$, which remains non-degenerate on every weight space. We again denote by $a_{il}$ the image of $a_{il}$ in $U^{+}$.

\medskip

The map $\varrho: F \rightarrow F\otimes F$ satisfies
\[
\varrho(\mathscr{R}) \subset \mathscr{R}\otimes F + F\otimes \mathscr{R},
\]
hence it descends to a well-defined homomorphism $\varrho: U^{+}\rightarrow U^{+}\otimes U^{+}$.

Let $^{t}\varrho: F\rightarrow F\otimes F$ be the composition of $\varrho$ with the permutation
\[
a\otimes b \longmapsto b\otimes a
\]
of $F\otimes F$. The anti-involution $\sigma: F\to F$ is defined by
\[
\sigma(a_{il})=a_{il}\quad\text{for each }(i,l)\in I^{\infty},\qquad
\sigma(ab)=\sigma(b)\sigma(a)\quad\text{for all }a,b\in F.\]

\vskip 3mm

\begin{lemma}\label{1.2}
\begin{itemize}
    \item[(a)] $\displaystyle \varrho\bigl(\sigma(x)\bigr)=(\sigma\otimes\sigma)\,^{t}\varrho(x),\qquad \forall x\in F$.
    \item[(b)] $\displaystyle \bigl(\sigma(x),\sigma(x')\bigr)=(x,x'),\qquad \forall x,x'\in F$.
\end{itemize}
\end{lemma}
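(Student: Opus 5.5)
The plan is to prove (a) first, since it is a statement about algebra maps that can be checked on generators, and then to deduce (b) from (a) using the adjunction properties (b), (c) of Proposition \ref{1.1}, by induction on weight.

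For (a), both sides are linear in $x$. I will first check (a) on the generators and on $1$, then show that the set of $x$ satisfying (a) is closed under products.

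On generators it is immediate: $\sigma(a_{il})=a_{il}$, and
\[
{}^{t}\varrho(a_{il})=1\otimes a_{il}+a_{il}\otimes 1 = \varrho(a_{il}).
\]

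For closure under products, take $x,y$ satisfying (a) and write $\varrho(x)=\sum x_1\otimes x_2$ and $\varrho(y)=\sum y_1\otimes y_2$. Then
\[
\varrho(xy)=\sum q^{(|x_2|,|y_1|)}(-1)^{p(x_2)p(y_1)}\,x_1y_1\otimes x_2y_2 .
\]
Applying the flip and then $\sigma\otimes\sigma$ gives
\[
(\sigma\otimes\sigma)\,{}^{t}\varrho(xy)=\sum q^{(|x_2|,|y_1|)}(-1)^{p(x_2)p(y_1)}\,\sigma(y_2)\sigma(x_2)\otimes\sigma(y_1)\sigma(x_1).
\]
On the other side, $\varrho(\sigma(xy))=\varrho(\sigma(y))\varrho(\sigma(x))$. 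By the hypothesis on $x$ and $y$, this equals
\[
\Bigl(\sum\sigma(y_2)\otimes\sigma(y_1)\Bigr)\Bigl(\sum\sigma(x_2)\otimes\sigma(x_1)\Bigr).
\]
Expanding with the twisted multiplication on $F\otimes F$ produces the factor $q^{(|\sigma(y_1)|,|\sigma(x_2)|)}(-1)^{p(\sigma(y_1))p(\sigma(x_2))}$. Since $\sigma$ preserves both weight and parity (it only reverses the order of generators), and $(\ ,\ )$ on $\h^*$ is symmetric, this factor coincides with the one above.

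The only delicate point in (a) is this sign and power of $q$. The check above shows that the plain, unsigned flip in ${}^{t}\varrho$ is exactly what makes the two sides agree, so no additional super sign is needed.

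For (b), by property (d) in the proof of Proposition \ref{1.1} both sides vanish unless $|x|=|x'|$. I will therefore argue by induction on the height of $|x'|$. When $x'$ is $1$ or a generator $a_{il}$, we have $\sigma(x')=x'$. Using (d) again, $x$ may then be taken to be a scalar multiple of $1$ or of $a_{il}$, where $\sigma$ also acts trivially, so the claim holds.

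For the inductive step, write $x'=yy'$ with $y,y'$ of smaller weight. Using Proposition \ref{1.1}(b) and then part (a), we get
\[
(\sigma(x),\sigma(y)\text{-}\text{reversed}) = (\sigma(x),\sigma(y')\sigma(y))=(\varrho(\sigma x),\sigma y'\otimes\sigma y)=\sum(\sigma x_2,\sigma y')(\sigma x_1,\sigma y).
\]
By the induction hypothesis this equals $\sum(x_1,y)(x_2,y')=(\varrho(x),y\otimes y')=(x,yy')$, which completes the induction.

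I expect no real obstacle beyond the sign bookkeeping in (a). The form on $F\otimes F$ is the unsigned product $(x_1,x_3)(x_2,x_4)$, so swapping tensor factors in the last step of (b) introduces no sign.
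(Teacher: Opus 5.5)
Your proof of (a) is the paper's argument: check generators, then expand $\varrho(\sigma(y))\varrho(\sigma(x))$ with the twisted product, using that $\sigma$ preserves weight and parity. For (b) the paper only says it ``follows immediately from (a)''. Your inductive step is a correct way to make that precise: you write $x'=yy'$, apply Proposition~\ref{1.1}(b) and (a) to get $\sum(\sigma x_2,\sigma y')(\sigma x_1,\sigma y)$, and use the induction hypothesis to return to $(x,yy')$.

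The step that fails is your base case. When $x'=a_{il}$ you say that, by weight orthogonality, $x$ may be taken to be a scalar multiple of $a_{il}$. This holds for $i\in I^{\mathrm{re}}$, since then $l=1$ and $F_{\alpha_i}=\Q(q)a_i$. It is false for $i\in I^{\mathrm{im}}$ and $l\ge 2$. Then $F_{l\alpha_i}$ also contains $a_{i1}a_{i,l-1}$, $a_{i,l-1}a_{i1}$, $a_{i1}^{l}$, and so on, and $\sigma$ does not fix these in general. This is exactly what the Borcherds--Bozec setting adds: generators can have height $>1$.

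The repair is short. If $x=zz'$ is a monomial with $|z|,|z'|\neq 0$, Proposition~\ref{1.1}(c) gives
\[
(zz',a_{il})=(z\otimes z',\,a_{il}\otimes 1+1\otimes a_{il})=(z,a_{il})(z',1)+(z,1)(z',a_{il})=0 .
\]
Since $\sigma(x)=\sigma(z')\sigma(z)$ is again such a product, $(\sigma(x),a_{il})=0$ as well. So only the $a_{il}$-component of $x$ contributes, and $\sigma$ acts trivially on it.

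Alternatively, use the symmetry of the form to always put the longer monomial in the second slot. Then the only base case is two monomials of length $\le 1$.

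With either fix, your proof of (b) is complete.
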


\begin{proof}
(a)  We prove the identity by induction on the weight of $x$. If $x=a_{il}$ is a generator, then
\[
\varrho(\sigma(a_{il}))=\varrho(a_{il})=a_{il}\otimes1+1\otimes a_{il},
\]
while
\[
(\sigma\otimes\sigma)\,^{t}\varrho(a_{il})=(\sigma\otimes\sigma)(1\otimes a_{il}+a_{il}\otimes1)=a_{il}\otimes1+1\otimes a_{il},
\]
so the claim holds.

Assume now that the statement is true for $x'$ and $x''$, and consider $x=x'x''$. Write
\[
\varrho(x')=\sum x_{1}'\otimes x_{2}',\qquad \varrho(x'')=\sum x_{1}''\otimes x_{2}''.
\]
Then
\[
\varrho(x'x'')=\sum (-1)^{p(x_{2}')p(x_{1}'')} q^{(|x_{2}'|,|x_{1}''|)}\; x_{1}'x_{1}''\otimes x_{2}'x_{2}''.
\]
Using the induction hypothesis,
\[
\varrho(\sigma(x'))=\sum \sigma(x_{2}')\otimes \sigma(x_{1}'),\qquad
\varrho(\sigma(x''))=\sum \sigma(x_{2}'')\otimes \sigma(x_{1}'').
\]
Hence
\begin{align*}
\varrho(\sigma(x'x'')) &=\varrho\bigl(\sigma(x'')\sigma(x')\bigr) \\
&= \varrho(\sigma(x''))\,\varrho(\sigma(x')) \\
&= \Bigl(\sum \sigma(x_{2}'')\otimes \sigma(x_{1}'')\Bigr)
   \Bigl(\sum \sigma(x_{2}')\otimes \sigma(x_{1}')\Bigr) \\
&= \sum (-1)^{p(\sigma(x_{1}''))p(\sigma(x_{2}'))}
        q^{(|\sigma(x_{1}'')|,|\sigma(x_{2}')|)}\;
        \sigma(x_{2}'')\sigma(x_{2}')\otimes
        \sigma(x_{1}'')\sigma(x_{1}') \\
&= \sum (-1)^{p(x_{1}'')p(x_{2}')}
        q^{(|x_{1}''|,|x_{2}'|)}\;
        \sigma(x_{2}'x_{2}'')\otimes
        \sigma(x_{1}'x_{1}'') \\
&= (\sigma\otimes\sigma)\,
   \Bigl(\sum (-1)^{p(x_{2}')p(x_{1}'')} q^{(|x_{2}'|,|x_{1}''|)}\;
         x_{2}'x_{2}''\otimes x_{1}'x_{1}''\Bigr) \\
&= (\sigma\otimes\sigma)\,^{t}\varrho(x'x'').
\end{align*}
This completes the inductive proof of (a).

(b)  follows immediately from (a)
\end{proof}

\vskip 3mm

From lemma \ref{1.2}(b), $\sigma$ maps $\R$ into itself, hence it induces an isomorphism $U^{+}\cong U^{+}$. The properties above hold in $U^{+}$.

Let $\overline{\phantom{.}} : F \to F$ be the unique $\mathbb{Q}$-algebra involution satisfying $\overline{q} = -q^{-1}$. Assuming the super Borcherds-Cartan datum is consistent, we have
\[
\overline{q_i} = (-1)^{p(i)} q_i^{-1}.
\]

We define a bar involution $\overline{\phantom{.}} : F \to F$ on generators by
\[
\overline{a_{il}} = a_{il} \qquad (\forall (i,l) \in I^{\infty}),
\]
and extend it to all of $F$ by requiring $\overline{fx} = \overline{f}\,\overline{x}$ for all $f \in \mathbb{Q}(q)$ and $x \in F$.

Let $F \overline{\otimes} F$ denote the $\mathbb{Q}(q)$-vector space $F \otimes F$ equipped with the multiplication
\[
(x_1 \otimes x_2)(x_3 \otimes x_4)
= (-q^{-1})^{(|x_2|,|x_3|)} \, (-1)^{p(x_2)p(x_3)} \, x_1 x_3 \otimes x_2 x_4 .
\]

Define a twisted coproduct $\overline{\varrho} : F \to F \overline{\otimes} F$ by
\[
\overline{\varrho}(x) = \overline{\varrho(\overline{x})} \qquad (x \in F).
\]
Then $\overline{\varrho}$ is an algebra homomorphism and satisfies the following coassociativity-type identity:
\[
(\overline{\varrho} \otimes 1) \overline{\varrho}(x)
= \overline{(\varrho \otimes 1)\varrho(\overline{x})}
= \overline{(1 \otimes \varrho)\varrho(\overline{x})}
= (1 \otimes \overline{\varrho}) \overline{\varrho}(x).
\]

Finally, let $\{\cdot,\cdot\} : F \times F \to \mathbb{Q}(q)$ be the symmetric bilinear form defined by
\[
\{x,y\} = (\overline{x},\overline{y}).
\]
It satisfies
\begin{itemize}
    \item[(a)] $\{1,1\} = 1$;
    \item[(b)] $\displaystyle \{a_{il},a_{jk}\} = \delta_{ij}\delta_{lk} \bigl(1-(-1)^{p(li)}q_i^{2l}\bigr)^{-1}$;
    \item[(c)] $\displaystyle \{x, yy'\} = \{\overline{\varrho}(x),\, y \otimes y'\}$ for all $x,y,y' \in F$.
\end{itemize}

\vskip 3mm

\begin{lemma}\label{1.3}
Assume the super Borcherds-Cartan datum is consistent.
\begin{itemize}
\item[(a)] Let $\varrho(x)=\sum x_{1}\otimes x_{2} $, then
$$\bar{\varrho}(x)=\sum(-q)^{-(|x_{1}|,|x_{2}|)}(-1)^{p(x_{1})p(x_{2})}x_{2}\otimes x_{1}.$$

\item[(b)] $\{x,y\}=(-1)^{\text{ht}|x|}(-1)^{\frac{p(x)p(y)+p(x)}{2}}q^{-\frac{(|x|,|y|)}{2}}q_{-|x|}(x,\sigma(y)).$
\end{itemize}

\end{lemma}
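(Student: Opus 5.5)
Both parts go by induction on the weight of $x$, with the generators $a_{il}$ as base case and multiplicativity on both sides as the inductive step. Part (b) is then derived from (a) together with Lemma \ref{1.2}.

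For (a), I would first check generators. Since $\overline{a_{il}}=a_{il}$, we get $\bar\varrho(a_{il})=\overline{a_{il}\otimes 1+1\otimes a_{il}}=a_{il}\otimes 1+1\otimes a_{il}$. This agrees with the right-hand side, because one tensor factor has weight $0$ and parity $0$, so all twisting factors are $1$. Next I would define
\[
\Theta(x)=\sum(-q)^{-(|x_1|,|x_2|)}(-1)^{p(x_1)p(x_2)}x_2\otimes x_1 ,
\]
and show $\Theta(x'x'')=\Theta(x')\Theta(x'')$ in $F\overline{\otimes}F$. Since $\bar\varrho$ is an algebra homomorphism into $F\overline{\otimes}F$, this gives (a) for all $x$.

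To verify multiplicativity, expand $\varrho(x'x'')$ using the twisted product in $F\otimes F$. This produces the factor $q^{(|x_2'|,|x_1''|)}(-1)^{p(x_2')p(x_1'')}$. Applying $\Theta$ adds the factor $(-q)^{-(|x_1'|+|x_1''|,\,|x_2'|+|x_2''|)}$ and the sign $(-1)^{(p(x_1')+p(x_1''))(p(x_2')+p(x_2''))}$. On the other side, $\Theta(x')\Theta(x'')$ in $F\overline{\otimes}F$ contributes $(-q^{-1})^{(|x_1'|,|x_2''|)}(-1)^{p(x_1')p(x_2'')}$. Comparing the two, the cross terms $(|x_1'|,|x_2''|)$ and $(|x_2'|,|x_1''|)$ should cancel against each other. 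The leftover signs $(-1)^{(|x_2'|,|x_1''|)}$ are where consistency enters: by $d_i\equiv p(i)$, we have $(\alpha_i,\alpha_j)=d_ia_{ij}\equiv p(i)p(j)\pmod 2$, since $a_{ij}$ is even for $p(i)=1$ and $a_{ii}$ is even. Hence $(-1)^{(|a|,|b|)}=(-1)^{p(a)p(b)}$ for homogeneous $a,b$, and this absorbs the stray sign. The calculation is routine bookkeeping.

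For (b), I would combine (a) with Lemma \ref{1.2}(a) to compare $\{\cdot,\cdot\}$ with $(\cdot,\sigma(\cdot))$. I would induct on $y$, the case $y=a_{il}$ being immediate from the formulas for $(a_{il},a_{jk})$ and $\{a_{il},a_{jk}\}$. For the inductive step, write
\[
\{x,yy'\}=\{\bar\varrho(x),y\otimes y'\}
\]
and use (a) to turn this into a sum over $\varrho(x)=\sum x_1\otimes x_2$ with swapped factors. Separately, expand $(x,\sigma(yy'))=(x,\sigma(y')\sigma(y))=(\varrho(x),\sigma(y')\otimes\sigma(y))$. The swap produced by (a) matches the order reversal coming from $\sigma$. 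One must then check that the normalizing scalar
\[
c(x,y)=(-1)^{\mathrm{ht}|x|}(-1)^{\frac{p(x)p(y)+p(x)}{2}}q^{-\frac{(|x|,|y|)}{2}}q_{-|x|}
\]
satisfies the cocycle identity
\[
c(x_1,y)\,c(x_2,y')\cdot(\text{factor from (a)})=c(x,yy').
\]
Here $|x|=|y|+|y'|$, $|x_1|=|y|$ and $|x_2|=|y'|$ by weight orthogonality. This identity is the main obstacle. The quadratic term $q^{-(|x|,|x|)/2}$ splits as $q^{-(|x_1|,|x_1|)/2}q^{-(|x_2|,|x_2|)/2}q^{-(|x_1|,|x_2|)}$, and the last factor matches $(-q)^{-(|x_1|,|x_2|)}$ up to a sign. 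The height term and $q_{-|x|}$ are additive, so they split cleanly. The remaining difficulty is the parity sign $(-1)^{\frac{p(x)p(y)+p(x)}{2}}$. I would interpret it as $(-1)^{\binom{p(x)}{2}}$-type data, i.e. as a quadratic form in parities, and verify by splitting into the four cases of $(p(x_1),p(x_2))$, again using $(-1)^{(|a|,|b|)}=(-1)^{p(a)p(b)}$ to trade $q$-exponent signs for parity signs.
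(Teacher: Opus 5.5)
\textbf{Verdict: there is a genuine gap.} Your plan follows the paper's proof. You prove (a) by showing both sides are multiplicative into $F\overline{\otimes}F$ and agree on generators. You prove (b) by induction on $y$, using $\{x,yy'\}=\{\bar\varrho(x),y\otimes y'\}$, part (a), and $(x,\sigma(yy'))=(\varrho(x),\sigma(y')\otimes\sigma(y))$. The problem is the parity fact you feed into this.

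\textbf{The parity congruence is false.} The claim $(\alpha_i,\alpha_j)\equiv p(i)p(j)\pmod 2$ does not hold. Bar-consistency gives $(\alpha_i,\alpha_j)=d_ia_{ij}\in 2\Z$ for \emph{all} $i,j$: if $p(i)=0$ then $d_i$ is even, and if $p(i)=1$ then $a_{ij}$ is even. So $(\mu,\nu)$ is even on $Q$ and $(-1)^{(|a|,|b|)}=1$ always. For example, for $i\in I^{\mathrm{re}}\cap I_{\bar 1}$, $(\alpha_i,\alpha_i)=2d_i$ is even while $p(i)^2=1$. This evenness is exactly what the paper invokes (``$(|x_1''|,|x_2'|)\in 2\Z$''), and it is essential. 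When you compare $\Theta(x'x'')$ with $\Theta(x')\Theta(x'')$, the parity signs already coincide and the only discrepancy is $(-1)^{(|x_2'|,|x_1''|)}$, which must equal $1$. Your rule would turn it into $(-1)^{p(x_2')p(x_1'')}$, which nothing absorbs. Concretely, for $i,j$ odd your rule makes the right side of (a) give coefficient $-1$ on $a_i\otimes a_j$ in $\bar\varrho(a_ia_j)$, whereas the true coefficient is $1$.

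\textbf{Consequence for the sign in (b).} Since $(-q)^{-(|x_1|,|x_2|)}=q^{-(|x_1|,|x_2|)}$ exactly, the sign $(-1)^{p(x_1)p(x_2)}$ coming from (a) cannot be traded for a $q$-exponent sign. Instead the parity prefactor $s(x)=(-1)^{\frac{p(x)p(y)+p(x)}{2}}$ must itself satisfy $s(x)=(-1)^{p(x_1)p(x_2)}s(x_1)s(x_2)$. With parities in $\{0,1\}$ one has $s(x)=(-1)^{p(x)}$, and this identity fails when $p(x_1)=p(x_2)=1$, so your four-case check cannot close. It works only if $p(x)=\sum_{i\in I_{\bar 1}}\nu_i\in\Z$ for $|x|=\sum_i\nu_i\alpha_i$. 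Then $s(x)=(-1)^{\binom{p(x)+1}{2}}$, and the needed identity is $\binom{a+b+1}{2}=\binom{a+1}{2}+\binom{b+1}{2}+ab$, which is a mod-$4$ statement rather than a mod-$2$ one.

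\textbf{The base case of (b) is not ``immediate''.} Take the paper's normalisation $(a_i,a_i)=(1-(-1)^{p(i)}q_i^{2})^{-1}$ and $x=y=a_i$ with $i\in I^{\mathrm{re}}$. The right side of (b) equals $-(-1)^{p(i)}q_i^{-2}(a_i,a_i)$. But $\{a_i,a_i\}=(a_i,a_i)$ with $\{\cdot,\cdot\}$ as the paper defines it, and $\{a_i,a_i\}=-(-1)^{p(i)}q_i^{2}(a_i,a_i)$ with Lusztig's outer bar; neither matches. Moreover, for $i\in I^{\mathrm{im}}$ the inductive step in general forces a different scalar on $a_{i1}^{l}$ than on the generator $a_{il}$ of the same weight. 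So no scalar depending only on $|x|$ can work there. The paper's proof asserts this base case without checking it, so the fault lies in the stated normalisation rather than in your strategy. Still, your induction cannot start until the scalar is corrected, for instance by letting it depend on the $\N I^{\infty}$-degree.
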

\begin{proof}
Both claims are true when $x=a_{il},\ y=a_{jk}$, for any $(i,l),(j,k)\in I^{\infty}$.

Assume (a) holds for $x=x^{'}$ and $x=x^{''}$, we show that it also holds for $x=x^{'}x^{''}$. Write
$$\varrho(x^{'})=\sum x_{1}^{'}\otimes x_{2}^{'},\ \varrho(x^{''})=\sum x_{1}^{''}\otimes x_{2}^{''},$$
$$\varrho(x^{'}x^{''})=q^{(|x_{1}^{''}|,|x_{2}^{'}|)}(-1)^{p(x_{2}^{'})p(x_{1}^{''})}x_{1}^{'}x_{1}^{''}\otimes x_{2}^{'}x_{2}^{''}.$$
By assumption, we have $$\varrho(\overline{x^{'}})=\sum q^{(|x_{1}^{'}|,|x_{2}^{'}|)}(-1)^{p(x_{1}^{'})p(x_{2}^{'})}\overline{x_{2}^{'}}\otimes \overline{x_{1}^{'}},$$
$$\varrho(\overline{x^{''}})=\sum q^{(|x_{1}^{''}|,|x_{2}^{''}|)}(-1)^{p(x_{1}^{''})p(x_{2}^{''})}\overline{x_{2}^{''}}\otimes \overline{x_{1}^{''}}$$.

Hence,

\begin{align*}
\varrho(\overline{x^{'}})\varrho(\overline{x^{''}})=&\sum q^{(|x_{1}^{'}|,|x_{2}^{'}|)+(|x_{1}^{''}|,|x_{2}^{''}|)}(-1)^{p(x_{1}^{'})p(x_{2}^{'})+p(x_{1}^{''})p(x_{2}^{''})}(\overline{x_{2}^{'}}\otimes \overline{x_{1}^{'}})(\overline{x_{2}^{''}}\otimes \overline{x_{1}^{''}})\\
=&\sum q^{(|x_{1}^{'}|,|x_{2}^{'}|)+(|x_{1}^{''}|,|x_{2}^{''}|)+(|x_{1}^{'}|,|x_{2}^{''}|)}(-1)^{p(x_{1}^{'})p(x_{2}^{'})+p(x_{1}^{''})p(x_{2}^{''})+p(x_{1}^{'})p(x_{2}^{''})}\overline{x_{2}^{'}x_{2}^{''}}\otimes\overline{x_{1}^{'}x_{2}^{''}}.
\end{align*}

Then,
\begin{align*}
\bar{\varrho}(x^{'}x^{''})=&\overline{\varrho(\overline{x_{}^{'}})\varrho(\overline{x_{}^{''}})}\\
=&\sum (-q)^{(|x_{1}^{'}x_{1}^{''}|,|x_{2}^{'}||x_{2}^{''}|)}(-1)^{p(x_{1}^{'}x_{1}^{''})p(x_{2}^{'}x_{2}^{''})}q^{(|x_{1}^{''}|,|x_{2}^{'}|)}(-1)^{(|x_{1}^{''}|,|x_{2}^{'}|)+p(x_{1}^{''})p(x_{2}^{'})}x_{2}^{'}x_{2}^{''}\otimes x_{1}^{'}x_{2}^{''}.
\end{align*}
Sine the datum is consistent, $(|x_{1}^{''}|,|x_{2}^{'}|)\in 2\Z$, so we have
$$\bar{\varrho}(x^{'}x^{''})=\sum (-q)^{(|x_{1}^{'}x_{1}^{''}|,|x_{2}^{'}||x_{2}^{''}|)}(-1)^{p(x_{1}^{'}x_{1}^{''})p(x_{2}^{'}x_{2}^{''})}q^{(|x_{1}^{''}|,|x_{2}^{'}|)}(-1)^{p(x_{1}^{''})p(x_{2}^{'})}x_{2}^{'}x_{2}^{''}\otimes x_{1}^{'}x_{2}^{''}.$$

We now prove (b). Assume that (b) holds for  $y=y^{'}$ and any $x=x^{'}$ and also for $y=y^{''}$ and any $x=x^{''}$, we show that it holds for $y=y^{'}y^{''}$ and any homogeneous $x$. We write $\varrho(x)=\sum x^{'}\otimes x^{''} $, we have
\begin{align*}
\{x,y\}=&\{\bar{\varrho}(x),y^{'}\otimes y^{''}\}=\sum (-q)^{-(|x^{'}|,|x^{''}|)}(-1)^{p(x^{'})p(x^{''})}\{x^{''}\otimes x^{'},y^{'}\otimes y^{''}\}\\
=&\sum q^{-(|x^{'}|,|x^{''}|)}(-1)^{p(x^{'})p(x^{''})}\{x^{''},y^{'}\}\{x^{'},y^{''}\}\\
=&(-1)^{\text{ht}|x^{'}|+\text{ht}|x^{''}|}(-1)^{\frac{p(x^{'}p(y^{''})+p(x^{'}))}{2}+\frac{p(x^{''})p(y^{'})+p(x^{''})}{2}+p(x^{'})p(x^{''})}q_{-|x^{'}|-|x^{''}|}\\
&*q^{\frac{-2(|x^{'}|,|x^{''}|)-(|x^{''}|,|y^{'}|)-(|x^{'}|,|y^{''}|)}{2}}(x^{''},\sigma(y^{'}))(x^{'},\sigma(y^{''}))\\
=&\sum(-1)^{\text{ht}|x|}q^{\frac{-(|x|,|y|)}{2}}(-1)^{\frac{p(x)p(y)+p(x)}{2}}q_{-|x|}(x^{'}\otimes x^{''},\sigma(y^{''})\otimes\sigma(y^{'}))\\
=&\sum(-1)^{\text{ht}|x|}q^{\frac{-(|x|,|y|)}{2}}(-1)^{\frac{p(x)p(y)+p(x)}{2}}q_{-|x|}(x,\sigma(y^{'}y^{''}))
\end{align*}
\end{proof}

By Lemma~\ref{1.3}(b), the radical of the bilinear form $\{\cdot,\cdot\}$ is exactly $\mathscr{R}$, and the bar involution carries $\mathscr{R}$ onto itself. Hence it descends to an involution on $U^{+}$, which we again denote by $\overline{\phantom{x}}$.

\medskip

For each $(i,l)\in I^{\infty}$, let $\varrho_{i,l},\varrho^{\,i,l}: F\to F$ be the $\mathbb{Q}$-linear maps defined by
\[
\varrho_{i,l}(1)=\varrho^{\,i,l}(1)=0, \qquad
\varrho_{i,l}(a_{jk})=\varrho^{\,i,l}(a_{jk})=\delta_{ij}\delta_{kl},
\]
and extended by the following twisted Leibniz rules for homogeneous $x,y\in F$:
\begin{align*}
\varrho_{i,l}(xy) &= x\,\varrho_{i,l}(y)
                   + (-1)^{p(li)p(y)} q^{(l\alpha_i,|y|)} \,\varrho_{i,l}(x)\,y, \\[2mm]
\varrho^{\,i,l}(xy) &= \varrho^{\,i,l}(x)\,y
                     + (-1)^{p(li)p(x)} q^{(l\alpha_i,|x|)} \,x\,\varrho^{i,l}(y).
\end{align*}
If $x\in F_{\mu}$, then $\varrho_{i,l}(x)$ and $\varrho^{\,i,l}(x)$ belong to $F_{\mu-l\alpha_i}$. Moreover, the coproduct $\varrho$ admits the decomposition
\[
\varrho(x)=\varrho_{i,l}(x)\otimes a_{il}+a_{il}\otimes \varrho^{\,i,l}(x)
          +\text{other bi-homogeneous terms}.
\]

Consequently, for all $x,y\in F$,
\[
(a_{il}y,x)=(a_{il},a_{il})\,(y,\varrho^{\,i,l}(x)),\qquad
(ya_{il},x)=(y,\varrho_{i,l}(x))\,(a_{il},a_{il}),
\]
which implies $\varrho^{\,i,l}(\mathscr{R})\cup\varrho_{i,l}(\mathscr{R})\subset\mathscr{R}$.

\vskip 3mm
\begin{lemma}\label{1.4}
For any $(i,l)\in I^{\infty}$, we have
\[
\varrho_{i,l} \circ \sigma = \sigma \circ \varrho^{i,l}.
\]
\end{lemma}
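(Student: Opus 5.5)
The plan is to show that both $\varrho_{i,l}\circ\sigma$ and $\sigma\circ\varrho^{i,l}$ are $\mathbb{Q}(q)$-linear and agree on $1$ and on the generators $a_{jk}$. We then show that they obey the same twisted Leibniz rule on products, so that an induction on the height of the weight of $x$ finishes the argument. Since $F$ is spanned by monomials in the $a_{jk}$, it suffices to treat $x=yz$ with $y,z$ homogeneous and the identity already known for $y$ and $z$. The base cases are immediate: $\sigma(1)=1$ and $\sigma(a_{jk})=a_{jk}$, so both sides vanish on $1$, and both sides equal $\delta_{ij}\delta_{kl}$ on $a_{jk}$.

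For the inductive step, we use that $\sigma$ is an anti-automorphism preserving weight and parity, so $\sigma(yz)=\sigma(z)\sigma(y)$. Applying the Leibniz rule for $\varrho_{i,l}$ to the product $\sigma(z)\sigma(y)$ gives
\[
\varrho_{i,l}(\sigma(z)\sigma(y))=\sigma(z)\,\varrho_{i,l}(\sigma(y))+(-1)^{p(li)p(y)}q^{(l\alpha_i,|y|)}\,\varrho_{i,l}(\sigma(z))\,\sigma(y).
\]
Here we used $p(\sigma(y))=p(y)$ and $|\sigma(y)|=|y|$. By the induction hypothesis this becomes
\[
\sigma(z)\,\sigma(\varrho^{i,l}(y))+(-1)^{p(li)p(y)}q^{(l\alpha_i,|y|)}\,\sigma(\varrho^{i,l}(z))\,\sigma(y).
\]
Using the anti-multiplicativity of $\sigma$ once more, this equals
\[
\sigma\bigl(\varrho^{i,l}(y)\,z+(-1)^{p(li)p(y)}q^{(l\alpha_i,|y|)}\,y\,\varrho^{i,l}(z)\bigr)=\sigma(\varrho^{i,l}(yz)),
\]
by the defining Leibniz rule for $\varrho^{i,l}$.

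The one point needing care is whether the twisted Leibniz rules make $\varrho_{i,l}$ and $\varrho^{i,l}$ well defined on $F$. They do, because $F$ is free: one defines these maps on monomials by iterating the rule, and associativity is compatible with this definition since the twisting factor is multiplicative in the weight and parity of $y$. Granting this, the main check is the bookkeeping of the sign and power of $q$ above. The factor attached to $\varrho_{i,l}(\sigma(z))\sigma(y)$ depends on the \emph{right} factor $\sigma(y)$. The factor attached to $y\,\varrho^{i,l}(z)$ depends on the \emph{left} factor $y$. Since $\sigma$ swaps left and right, these two factors coincide.

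Finally, $\sigma$ is $\mathbb{Q}(q)$-linear, so no issue arises with scalars. Alternatively, the result can be deduced from Lemma~\ref{1.2}(a): we extract the $(\,\cdot\,)\otimes a_{il}$ component of $\varrho(\sigma(x))=(\sigma\otimes\sigma)\,{}^{t}\varrho(x)$ and compare it with the $a_{il}\otimes(\,\cdot\,)$ component of $\varrho(x)$. This works provided one accepts that the decomposition of $\varrho$ stated before the lemma identifies $\varrho_{i,l}$ and $\varrho^{i,l}$ as these components.
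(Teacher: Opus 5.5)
Your proposal is correct and takes essentially the same route as the paper. You check the identity on $1$ and on the generators $a_{jk}$. You then induct on products using $\sigma(yz)=\sigma(z)\sigma(y)$, the twisted Leibniz rule for $\varrho_{i,l}$, the induction hypothesis, and the Leibniz rule for $\varrho^{i,l}$; the twisting factors match because $\sigma$ swaps left and right factors. Your remarks on well-definedness of $\varrho_{i,l},\varrho^{i,l}$ on the free algebra and the alternative derivation from Lemma~\ref{1.2}(a) are extras not in the paper, but they do not change the argument.
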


\begin{proof}
We first check the equality on generators. For each $a_{jk}$,
\[
\varrho_{i,l}(\sigma(a_{jk})) = \varrho_{i,l}(a_{jk}) = \delta_{ij}\delta_{kl}
= \sigma(\delta_{ij}\delta_{kl}) = \sigma(\varrho^{i,l}(a_{jk})).
\]

Now assume the statement holds for homogeneous elements $x$ and $y$. We prove it for the product $xy$. Using that $\sigma$ is an anti-involution and the twisted Leibniz rule for $\varrho_{i,l}$,
\begin{align*}
\varrho_{i,l}(\sigma(xy))
&= \varrho_{i,l}(\sigma(y)\sigma(x)) \\
&= \sigma(y)\varrho_{i,l}(\sigma(x))
   + (-1)^{p(li)p(\sigma(x))} q^{(l\alpha_i,|\sigma(x)|)}
     \varrho_{i,l}(\sigma(y))\sigma(x) \\
&= \sigma(y)\sigma(\varrho^{i,l}(x))
   + (-1)^{p(li)p(x)} q^{(l\alpha_i,|x|)}
     \sigma(\varrho^{i,l}(y))\sigma(x) \quad\text{(by induction)}\\
&= \sigma\bigl(\varrho^{i,l}(x)y\bigr)
   + (-1)^{p(li)p(x)} q^{(l\alpha_i,|x|)}
     \sigma\bigl(x\varrho^{i,l}(y)\bigr) \quad\text{(since $\sigma$ is an anti-homomorphism)}\\
&= \sigma\Bigl(
     \varrho^{i,l}(x)y
     + (-1)^{p(li)p(x)} q^{(l\alpha_i,|x|)} x\varrho^{i,l}(y)
   \Bigr) \\
&= \sigma(\varrho^{i,l}(xy)) \quad\text{(by the Leibniz rule for $\varrho^{i,l}$)}.
\end{align*}
Thus the identity holds for $xy$, and by induction it holds for all elements of $F$.
\end{proof}
\vskip 3mm
\begin{lemma}\label{1.5}
Assume the super Borcherds--Bozec Cartan datum is consistent. Then for any homogeneous element $x\in F$, we have
\[
\varrho_{i,l}(x)=(-1)^{p(x)p(li)-p(li)p(li)}q^{(|x|-l\alpha_{i},\;l\alpha_{i})}
                 \,\overline{\varrho^{i,l}(\bar{x})}.
\]
\end{lemma}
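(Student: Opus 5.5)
Equivalently, with $y=\bar x$ (so $\overline{y}=x$, $|y|=|x|$, $p(y)=p(x)$), the claim is
\[
\overline{\varrho_{i,l}(\bar y)}=(-1)^{p(y)p(li)-p(li)}\,\overline{q^{(|y|-l\alpha_i,l\alpha_i)}}\;\varrho^{i,l}(y).
\]
Here I use that $p(li)p(li)=p(li)$ modulo $2$. I will prove the stated identity directly by induction on the height of $|x|$, using only the defining twisted Leibniz rules for $\varrho_{i,l}$ and $\varrho^{i,l}$, the fact that the bar involution is a ring automorphism fixing the $a_{jk}$ with $\bar q=-q^{-1}$, and the consistency assumption.

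\emph{Base case.} For $x=a_{jk}$ both sides vanish unless $(j,k)=(i,l)$. In that case the left side is $1$. The right side is $(-1)^{p(li)-p(li)}q^{0}\cdot\overline{1}=1$, so the claim holds. The case $x=1$ is trivial.

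\emph{Inductive step.} Assume the identity holds for homogeneous $x$ and $y$, and take the product $xy$.

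\begin{enumerate}
\item[(1)] Apply the Leibniz rule for $\varrho^{i,l}$ to $\bar x\bar y$, then apply bar. Since $\overline{q^{n}}=(-1)^n q^{-n}$, this gives
\[
\overline{\varrho^{i,l}(\overline{xy})}=\overline{\varrho^{i,l}(\bar x)}\,y+(-1)^{p(li)p(x)}(-1)^{(l\alpha_i,|x|)}q^{-(l\alpha_i,|x|)}\,x\,\overline{\varrho^{i,l}(\bar y)}.
\]
\item[(2)] Substitute the induction hypothesis for $\overline{\varrho^{i,l}(\bar x)}$ and $\overline{\varrho^{i,l}(\bar y)}$. These express them as scalar multiples of $\varrho_{i,l}(x)$ and $\varrho_{i,l}(y)$.
\item[(3)] Multiply by the predicted prefactor $(-1)^{p(xy)p(li)-p(li)}q^{(|x|+|y|-l\alpha_i,\,l\alpha_i)}$. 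Then compare term by term with the Leibniz expansion
\[
\varrho_{i,l}(xy)=x\,\varrho_{i,l}(y)+(-1)^{p(li)p(y)}q^{(l\alpha_i,|y|)}\varrho_{i,l}(x)\,y.
\]
\end{enumerate}

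\emph{Matching the $q$-powers.} For the term $\varrho_{i,l}(x)y$, the exponent is
\[
(|x|+|y|-l\alpha_i,l\alpha_i)-(|x|-l\alpha_i,l\alpha_i)=(|y|,l\alpha_i),
\]
which is what is needed. For the term $x\varrho_{i,l}(y)$, the exponent is
\[
(|x|+|y|-l\alpha_i,l\alpha_i)-(l\alpha_i,|x|)-(|y|-l\alpha_i,l\alpha_i)=0,
\]
which is also what is needed.

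\emph{Matching the signs.} The remaining issue is the signs, and I expect this to be the main obstacle. The sign $(-1)^{(l\alpha_i,|x|)}$ must be disposed of. Since $(l\alpha_i,\alpha_j)=l\,d_ia_{ij}$, it suffices to check parity on each generator. By consistency ($d_i\equiv p(i)$) and condition (iii) ($a_{ij}$ even for $i\in I_{\bar 1}$), this is even whenever $i$ is odd. When $i$ is even, $d_i$ is even, and it is again even. So $(l\alpha_i,|x|)\in 2\Z$, exactly as the paper uses in Lemma \ref{1.3}.

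After that, only parity signs $(-1)^{p(li)(\cdots)}$ remain. The bookkeeping is the following.

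\begin{itemize}
\item For the $x\varrho_{i,l}(y)$ term, the product of the prefactor, the sign $(-1)^{p(li)p(x)}$ from step (1), and the inverse induction sign $(-1)^{p(y)p(li)-p(li)}$ gives $(-1)^{p(li)[(p(x)+p(y))+p(x)+p(y)]}=1$.
\item For the $\varrho_{i,l}(x)y$ term, it gives $(-1)^{p(li)[p(x)+p(y)+p(x)]}=(-1)^{p(li)p(y)}$, which matches the Leibniz rule.
\end{itemize}

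Once both terms match, the induction closes. By linearity the identity then holds on all of $F$.
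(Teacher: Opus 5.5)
Your proof is correct and follows essentially the paper's argument: induction on monomials, expanding $\overline{\varrho^{i,l}(\bar x\,\bar y)}$ with the Leibniz rule for $\varrho^{i,l}$, substituting the inductive hypothesis, and matching the result against the Leibniz rule for $\varrho_{i,l}(xy)$. Your bookkeeping is cleaner than the paper's in two respects: you explicitly justify discarding $(-1)^{(l\alpha_i,|x|)}$ via $d_i\equiv p(i)$ together with condition (iii), and by multiplying through by the predicted prefactor you avoid the sign slip in the $q$-exponent of the paper's final display, which should read $q^{-(|xy|-l\alpha_i,\,l\alpha_i)}$.
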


\begin{proof}
We proceed by induction on the weight of $x$.
When $x=a_{jk}$, both sides vanish unless $(j,k)=(i,l)$; in that case,
\[
\varrho_{i,l}(a_{il})=1,\qquad
\overline{\varrho^{i,l}(\overline{a_{il}})}=1,
\]
and the factor $(-1)^{p(a_{il})p(li)-p(li)p(li)}q^{(l\alpha_i-l\alpha_i,\;l\alpha_i)}=1$, so the equality holds.

Now assume the formula holds for homogeneous elements $x$ and $y$, and consider the product $xy$.
Using the twisted Leibniz rule and the induction hypothesis, we compute
\begin{align*}
\overline{\varrho^{i,l}(\overline{xy})}
&= \overline{\varrho^{i,l}\bigl(\bar{x}\,\bar{y}\bigr)} \\
&= \overline{\varrho^{i,l}(\bar{x})\,\bar{y}}
   + (-1)^{p(li)p(\bar{x})} q^{(l\alpha_i,|\bar{x}|)}
     \overline{\bar{x}\,\varrho^{i,l}(\bar{y})} \\
&= \overline{\varrho^{i,l}(\bar{x})}\,y
   + (-1)^{p(li)p(x)} (-q^{-1})^{(l\alpha_i,|x|)}
     x\,\overline{\varrho^{i,l}(\bar{y})}.
\end{align*}
Now substitute the induction hypothesis for $\overline{\varrho^{i,l}(\bar{x})}$ and
$\overline{\varrho^{i,l}(\bar{y})}$:
\begin{align*}
\overline{\varrho^{i,l}(\overline{xy})}
&= (-1)^{p(x)p(li)-p(li)p(li)} q^{-(|x|-l\alpha_i,\;l\alpha_i)} \varrho_{i,l}(x)\,y \\
&\quad + (-1)^{p(li)p(x)} (-q^{-1})^{(l\alpha_i,|x|)}
         (-1)^{p(y)p(li)-p(li)p(li)} q^{-(|y|-l\alpha_i,\;l\alpha_i)}
         x\,\varrho_{i,l}(y) \\
&= (-1)^{p(x)p(li)-p(li)p(li)} q^{-(|x|-l\alpha_i,\;l\alpha_i)}
   \bigl[\varrho_{i,l}(x)\,y \\
&\qquad + (-1)^{p(li)p(y)}
          (-1)^{(l\alpha_i,|x|)}
          q^{-(|y|,\;l\alpha_i)}
          x\,\varrho_{i,l}(y)\bigr].
\end{align*}
Simplifying the sign and $q$-factors,  we eventually obtain
\[
\overline{\varrho^{i,l}(\overline{xy})}
= (-1)^{p(xy)p(li)-p(li)p(li)} q^{(|xy|-l\alpha_i,\;l\alpha_i)}
  \varrho_{i,l}(xy),
\]
which completes the induction step.
\end{proof}

\vskip 3mm

\begin{lemma}\label{1.6}
Let $x \in U^{+}_{\nu}$ be a homogeneous element, where $\nu \neq 0$.
\begin{itemize}
    \item[(a)] If $\varrho_{i,l}(x) = 0$ for all $(i,l) \in I^{\infty}$, then $x = 0$.
    \item[(b)] If $\varrho^{i,l}(x) = 0$ for all $(i,l) \in I^{\infty}$, then $x = 0$.
\end{itemize}
\end{lemma}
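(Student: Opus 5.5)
The plan is to use the non-degeneracy of the bilinear form on each weight space $U^{+}_{\nu}$, together with the adjunction identities established just before Lemma \ref{1.4}. Those identities descend to $U^{+}$, because $\varrho_{i,l}$ and $\varrho^{i,l}$ preserve $\mathscr{R}$:
\[
(ya_{il},x)=(y,\varrho_{i,l}(x))\,(a_{il},a_{il}),\qquad
(a_{il}y,x)=(a_{il},a_{il})\,(y,\varrho^{i,l}(x)).
\]

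For (a), let $x\in U^{+}_{\nu}$ with $\nu\neq 0$, and suppose $\varrho_{i,l}(x)=0$ for all $(i,l)\in I^{\infty}$. Since $\nu\neq 0$, the weight space $U^{+}_{\nu}$ is spanned by images of monomials $a_{i_1l_1}\cdots a_{i_rl_r}$ with $r\ge 1$. Every such monomial can be written as $y\,a_{il}$, where $y=a_{i_1l_1}\cdots a_{i_{r-1}l_{r-1}}$ (or $y=1$ if $r=1$) and $(i,l)=(i_r,l_r)$. The first identity then gives
\[
(y\,a_{il},x)=(y,\varrho_{i,l}(x))\,(a_{il},a_{il})=0.
\]
By linearity, $(z,x)=0$ for all $z\in U^{+}_{\nu}$. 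Condition (d) gives $(z,x)=0$ for $z$ of any other weight. Hence $x$ lies in the radical of the form on $U^{+}$, and since this form is non-degenerate on each weight space, $x=0$.

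Part (b) is identical, except that each monomial is written as $a_{il}\,y$ using the leftmost generator, and the second adjunction identity is used. Alternatively, one can apply Lemma \ref{1.4} together with Lemma \ref{1.2}(b): $\sigma$ preserves $\mathscr{R}$ and satisfies $\varrho_{i,l}(\sigma(x))=\sigma(\varrho^{i,l}(x))=0$, so (a) applied to $\sigma(x)$ gives $\sigma(x)=0$, and therefore $x=0$.

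The only point needing care is the justification of the adjunction identity itself, which I would double-check. Write $\varrho(x)=\varrho_{i,l}(x)\otimes a_{il}+\cdots$. Then $(y\otimes a_{il},\varrho(x))$ picks out exactly the component of $\varrho(x)$ whose second factor has weight $l\alpha_i$. That component is the image of the span of the $a_{jk}$ with $k\alpha_j=l\alpha_i$, a space that for $i\in I^{\text{im}}$ also contains products such as $a_{i1}^{l}$. The cleanest route is to observe that $(a_{il},w)=0$ for every $w\in F_{l\alpha_i}$ lying in the span of products of at least two generators. This holds because, by (b) of Proposition \ref{1.1}, $(a_{il},w_1w_2)=(\varrho(a_{il}),w_1\otimes w_2)=0$ when $w_1,w_2$ have nonzero weights. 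Consequently only the $a_{il}$-coefficient survives, and that coefficient is precisely $\varrho_{i,l}(x)$ as defined by the Leibniz rule. Once this is confirmed, the argument above is complete.
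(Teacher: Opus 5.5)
Your proposal is correct and is essentially the paper's argument: the adjunction $(ya_{il},x)=(y,\varrho_{i,l}(x))(a_{il},a_{il})$ (and its mirror with $\varrho^{i,l}$), the fact that for $\nu\neq 0$ the space $U^{+}_{\nu}$ is spanned by monomials ending (resp.\ beginning) with some $a_{il}$, and non-degeneracy of the form. Your extra verification of the adjunction identity, via $(a_{il},w_1w_2)=0$ and matching the Leibniz rule, soundly justifies a step the paper asserts without proof. One wording slip: the span of the $a_{jk}$ with $k\alpha_j=l\alpha_i$ is just $\mathbb{Q}(q)a_{il}$, so what you mean is the weight space $F_{l\alpha_i}$.
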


\begin{proof}
We prove part (a). Assume that $\varrho_{i,l}(x)=0$ for all $(i,l)$. Recall the identity
\[
(ya_{il}, x) = (y, \varrho_{i,l}(x))\,(a_{il}, a_{il}), \qquad y \in F \text{ homogeneous}.
\]
Since $\varrho_{i,l}(x)=0$, the right-hand side vanishes for every $y$; hence $(ya_{il},x)=0$ for all $y$ and all $(i,l)$. Because $U^{+}_{\nu}$ is contained in the subspace $\sum_{(i,l)\in I^{\infty}} U^{+}a_{il}$ and the bilinear form $(\cdot,\cdot)$ is non-degenerate on $U^{+}$, it follows that $x$ must belong to the radical $\mathscr{R}$. Consequently $x=0$ in $U^{+}$.

The proof of (b) is completely analogous, using the dual identity $(a_{il}y, x) = (a_{il},a_{il})\,(y,\varrho^{i,l}(x))$.
\end{proof}

Let $\mathcal{A} = \mathbb{Z}[q,q^{-1}]$. For $a \in \mathbb{Z}$ and $t \in \mathbb{N}$, we set
\[
\begin{bmatrix}
a \\
t
\end{bmatrix}_{i}
= \frac{\displaystyle\prod_{s=0}^{t-1} \bigl( ((-1)^{p(i)} q_i)^{a-s} - q_i^{s-a} \bigr)}
       {\displaystyle\prod_{s=1}^{t} \bigl( ((-1)^{p(i)} q_i)^{s} - q_i^{-s} \bigr)}.
\]

As in \cite{CHW}, we have the identities
\[
\begin{bmatrix}
a \\
t
\end{bmatrix}_{i}
= (-1)^{t} (-1)^{p(i)\bigl(ta - \frac{t(t-1)}{2}\bigr)}
  \begin{bmatrix}
  t-a-1 \\
  t
  \end{bmatrix}_{i},
\]
and
\[
\begin{bmatrix}
a \\
t
\end{bmatrix}_{i} = 0, \quad \text{if } 0 \le a < t.
\]

Let $z$ be another indeterminate. Then for $a \ge 0$,
\[
\prod_{j=0}^{a-1} \bigl(1 + ((-1)^{p(i)} q_i^{2})^{j} z\bigr)
= \sum_{t=0}^{a} (-1)^{p(i)\frac{t(t-1)}{2}} q_i^{t(a-1)}
  \begin{bmatrix}
  a \\
  t
  \end{bmatrix}_{i} z^{t}.
\]
We deduce that $\displaystyle \begin{bmatrix} a \\ t \end{bmatrix}_{i} \in \mathcal{A}$.

We denote
\[
\begin{bmatrix}
n
\end{bmatrix}_{i}
= \begin{bmatrix}
   n \\
   1
   \end{bmatrix}_{i}
= \frac{((-1)^{p(i)} q_i)^{n} - q_i^{-n}}{(-1)^{p(i)} q_i - q_i^{-1}}, \qquad
\begin{bmatrix}
n
\end{bmatrix}_{i}^{!} = \prod_{s=1}^{n} \begin{bmatrix} s \end{bmatrix}_{i}, \quad \text{for } n \in \mathbb{N}.
\]
Then for $0 \le t \le n$,
\[
\begin{bmatrix}
n \\
t
\end{bmatrix}_{i}
= \frac{\begin{bmatrix} n \end{bmatrix}_{i}^{!}}
       {\begin{bmatrix} t \end{bmatrix}_{i}^{!} \begin{bmatrix} n-t \end{bmatrix}_{i}^{!}}.
\]
Moreover,
\begin{equation}\label{b}
\sum_{t=0}^{n} (-1)^{t + p(i)\binom{t}{2}} q_i^{t(n-1)}
\begin{bmatrix}
n \\
t
\end{bmatrix}_{i} = 0, \quad \text{for } n \ge 1.
\end{equation}

If $xy = (-1)^{p(i)} q_i^{2} yx$, then
\begin{equation}\label{a}
(x+y)^{n} = \sum_{t=0}^{n} q_i^{t(n-t)}
           \begin{bmatrix}
           n \\
           t
           \end{bmatrix}_{i} y^{t} x^{n-t}.
\end{equation}

\medskip

For any $n \in \mathbb{Z}$, $i \in I^{\mathrm{re}}$, define
\[
a_i^{(n)} =
\begin{cases}
\displaystyle \frac{a_i}{\begin{bmatrix} n \end{bmatrix}_{i}^{!}}, & n \ge 0, \\[10pt]
0, & \text{otherwise}.
\end{cases}
\]

\begin{lemma}\label{1.7}
For any $n \in \mathbb{Z}$, $i \in I^{re}$, we have
\begin{itemize}
    \item[(a)] $\displaystyle \varrho\bigl(a_i^{(n)}\bigr) = \sum_{t + t' = n} q_i^{t t'}\, a_i^{(t)} \otimes a_i^{(t')}$,
    \item[(b)] $\displaystyle \bar{\varrho}\bigl(a_i^{(n)}\bigr) = \sum_{t + t' = n} ((-1)^{p(i)} q_i)^{-t t'}\, a_i^{(t)} \otimes a_i^{(t')}$.
\end{itemize}
\end{lemma}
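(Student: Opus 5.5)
Note that the definition as printed reads $a_i/[n]_i^!$; we interpret it as the divided power $a_i^{(n)}=a_i^n/[n]_i^!$, with $a_i=a_{i1}$ for $i\in I^{\mathrm{re}}$. The plan for (a) is to compute $\varrho(a_i^n)$ with the $q$-binomial identity \eqref{a}, then divide by $[n]_i^!$.

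Since $\varrho$ is an algebra homomorphism into $F\otimes F$ with the twisted multiplication, we have $\varrho(a_i^n)=(x+y)^n$, where $x=a_i\otimes 1$ and $y=1\otimes a_i$. By the multiplication rule,
\[
xy=a_i\otimes a_i,\qquad yx=(-1)^{p(i)}q^{(\alpha_i,\alpha_i)}\,a_i\otimes a_i=(-1)^{p(i)}q_i^{2}\,a_i\otimes a_i .
\]
This uses $p(a_i)=p(i)$ and $(\alpha_i,\alpha_i)=2d_i$ for $i\in I^{\mathrm{re}}$. Thus $yx=(-1)^{p(i)}q_i^2\,xy$. Because $(-1)^{p(i)}=\pm1$, this is equivalent to $xy=(-1)^{p(i)}q_i^{-2}yx$, which is the hypothesis of \eqref{a} with $q_i$ replaced by $q_i^{-1}$.

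To avoid that substitution, I would instead apply \eqref{a} with the roles of the two variables exchanged: take $x'=y$ and $y'=x$, so that $x'y'=(-1)^{p(i)}q_i^2\,y'x'$ holds directly. This gives
\[
(x+y)^n=\sum_t q_i^{t(n-t)}\begin{bmatrix} n\\ t\end{bmatrix}_i x^t y^{n-t}.
\]
Next, $x^ty^{n-t}=(a_i^t\otimes1)(1\otimes a_i^{n-t})=a_i^t\otimes a_i^{n-t}$, with no twist because the inner factors are $1$. Dividing by $[n]_i^!$ and using $\begin{bmatrix} n\\ t\end{bmatrix}_i=[n]_i^!/([t]_i^![n-t]_i^!)$ yields (a). Negative $n$ is trivial, since both sides are zero.

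For (b), the cleanest route is to apply Lemma \ref{1.3}(a) to (a), rather than redoing the computation in $F\overline{\otimes}F$. Each term $a_i^{(t)}\otimes a_i^{(t')}$ has weights $t\alpha_i,t'\alpha_i$, pairing $(t\alpha_i,t'\alpha_i)=2d_itt'$, and parities $tp(i),t'p(i)$. Lemma \ref{1.3}(a) then gives
\[
\bar\varrho(a_i^{(n)})=\sum q_i^{tt'}(-q)^{-2d_itt'}(-1)^{p(i)tt'}\,a_i^{(t')}\otimes a_i^{(t)}.
\]
Since $(-q)^{-2d_itt'}=q_i^{-2tt'}$, the coefficient becomes $q_i^{-tt'}(-1)^{p(i)tt'}=((-1)^{p(i)}q_i)^{-tt'}$. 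Finally, relabel $t\leftrightarrow t'$ and use that the sum is symmetric.

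As a cross-check, one can run the computation directly in $F\overline{\otimes}F$, where $yx=(-q^{-1})^{2d_i}(-1)^{p(i)}xy$. Since $(-q^{-1})^{2d_i}=q_i^{-2}$, \eqref{a} would then be needed with $q_i\mapsto q_i^{-1}$ together with the symmetry of the bar-invariant binomials.

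The main obstacle is bookkeeping. In (a) one must get the correct orientation of \eqref{a}, which is why the variables are swapped. In (b) the consistency assumption must be invoked in the form that Lemma \ref{1.3}(a) requires. One also has to check that $\bar\varrho(a_i^{(n)})$ is well defined: the bar involution acts on $[n]_i^!$, and the bar-invariance of $[n]_i$ under $q\mapsto -q^{-1}$ in the consistent case must be verified from the definition.
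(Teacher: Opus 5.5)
Your proposal is correct and is essentially the paper's proof: your swapped pair $x'=1\otimes a_i$, $y'=a_i\otimes 1$ is exactly the pair the paper plugs into \eqref{a} for (a), and for (b) the paper likewise just invokes Lemma~\ref{1.3}(a), whose sign and power bookkeeping you carry out correctly. Your closing worry about well-definedness is unnecessary, since $\bar\varrho(fx)=\overline{\bar f\,\varrho(\bar x)}=f\,\bar\varrho(x)$ shows $\bar\varrho$ is $\mathbb{Q}(q)$-linear, so your route needs no bar-invariance of $[n]_i^!$.
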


\begin{proof}
We prove (a). Let $x = 1 \otimes a_i$ and $y = a_i \otimes 1$ in $F \otimes F$. These elements satisfy the commutation relation
\[
xy = (-1)^{p(i)} q_i^{2} \, yx.
\]
Applying formula (\ref{a}) with this pair $(x, y)$ gives
\[
(x + y)^n = \sum_{t=0}^{n} q_i^{t(n-t)} \begin{bmatrix} n \\ t \end{bmatrix}_{i} y^{t} x^{\,n-t}.
\]
On the other hand, by the definition of the coproduct $\varrho$,
\[
(x + y)^n = (1 \otimes a_i + a_i \otimes 1)^n = \varrho(a_i)^n = \varrho(a_i^n).
\]
Since $a_i^{(n)} = a_i^n / \begin{bmatrix}n\end{bmatrix}_{i}^{!}$, and $\varrho$ is an algebra homomorphism, we obtain
\[
\varrho\bigl(a_i^{(n)}\bigr) = \sum_{t=0}^{n} q_i^{t(n-t)} \frac{\begin{bmatrix}n\end{bmatrix}_{i}^{!}}
                                         {\begin{bmatrix}t\end{bmatrix}_{i}^{!} \begin{bmatrix}n-t\end{bmatrix}_{i}^{!}}
                       \; a_i^{t} \otimes a_i^{\,n-t}
                       \Big/ \begin{bmatrix}n\end{bmatrix}_{i}^{!}
     = \sum_{t=0}^{n} q_i^{t(n-t)} \; a_i^{(t)} \otimes a_i^{(n-t)}.
\]

Part (b) follows directly from Lemma~\ref{1.3} and the definition of $\bar{\varrho}$.
\end{proof}

\vskip 3mm
\begin{lemma}\label{1.8}
For any $n \in \mathbb{Z}$, $i \in I^{\mathrm{re}}$, we have
\[
\bigl(a_i^{(n)}, a_i^{(n)}\bigr) =
q_i^{\binom{n}{2}} \bigl(1 - (-1)^{p(i)} q_i^{2}\bigr)^{-n}
\Bigl( \begin{bmatrix} n \end{bmatrix}_i^{!} \Bigr)^{-1}.
\]
\end{lemma}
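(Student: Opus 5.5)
We argue by induction on $n$, following Lusztig's computation of $(\theta_i^{(n)},\theta_i^{(n)})$. For $n<0$ both sides are read as zero (or the statement is vacuous), and for $n=0$ both sides equal $1$, since $(1,1)=1$. For $n=1$ the claim reduces to Proposition \ref{1.1}(a): since $i\in I^{\mathrm{re}}$ we have $l=1$, so $(a_i,a_i)=(1-(-1)^{p(i)}q_i^2)^{-1}$, which matches the right-hand side at $n=1$. Throughout we read $a_i^{(n)}=a_i^n/[n]_i^!$, as in the proof of Lemma \ref{1.7}.

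For the inductive step we write $a_i^{(n)}=[n]_i^{-1}a_i^{(n-1)}a_i$ and apply Proposition \ref{1.1}(b) together with Lemma \ref{1.7}(a):
\[
(a_i^{(n)},a_i^{(n)})=[n]_i^{-1}\bigl(\varrho(a_i^{(n)}),\,a_i^{(n-1)}\otimes a_i\bigr)
=[n]_i^{-1}\sum_{t+t'=n}q_i^{tt'}(a_i^{(t)},a_i^{(n-1)})(a_i^{(t')},a_i).
\]
By the weight-orthogonality property (d) from the proof of Proposition \ref{1.1}, only the term with $t=n-1$ and $t'=1$ survives, and it carries the factor $q_i^{n-1}$. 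This gives the recursion
\[
(a_i^{(n)},a_i^{(n)})=[n]_i^{-1}\,q_i^{n-1}\,(1-(-1)^{p(i)}q_i^2)^{-1}\,(a_i^{(n-1)},a_i^{(n-1)}).
\]
Iterating from $n=1$ produces the factor $q_i^{\sum_{s=1}^{n-1}s}=q_i^{\binom n2}$, the factor $(1-(-1)^{p(i)}q_i^2)^{-n}$, and the product $\prod_{s=1}^n[s]_i^{-1}=([n]_i^!)^{-1}$. This is exactly the stated formula.

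The main obstacle is making sure no extra sign or $q$-power enters when the pairing is evaluated on $F\otimes F$. In Lemma \ref{1.7}(a) the coefficient is exactly $q_i^{tt'}$, and the pairing $(x_1\otimes x_2,x_3\otimes x_4)=(x_1,x_3)(x_2,x_4)$ has no super sign. So the only place a sign could hide is the parity of $a_i$ when $p(i)=1$, and this is already accounted for in the super-commutation relation used to prove Lemma \ref{1.7}.

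We should also check that the $q$-power convention for $(x+y)^n$ in \eqref{a} really gives $q_i^{t(n-t)}$ with $t$ counting the $y=a_i\otimes1$ factors. If \eqref{a} is misremembered, the surviving term could instead carry $q_i^{0}$ or an extra $(-1)^{p(i)}$. In that case we would recompute $\varrho(a_i^n)$ directly by induction, using $\varrho(a_i^n)=\varrho(a_i^{n-1})(a_i\otimes1+1\otimes a_i)$. This yields the single surviving coefficient from the product rule in $F\otimes F$: $(a_i^{n-1}\otimes1)(1\otimes a_i)$ gives $q^0$, while $(a_i^{n-2}\otimes a_i)(a_i\otimes1)$-type terms accumulate powers of $q_i^2$ and signs $(-1)^{p(i)}$. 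Summing these contributions gives $\sum_k ((-1)^{p(i)}q_i^2)^k$, which equals $q_i^{n-1}[n]_i$ up to the bar-consistency normalization. Combined with the division by $[n]_i$, this again gives the factor $q_i^{n-1}$ and confirms the recursion.
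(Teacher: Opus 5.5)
Your proof is correct and follows essentially the same route as the paper: induction on $n$, pairing against $a_i^{(n-1)}a_i$, expanding $\varrho(a_i^{(n)})$ by Lemma~\ref{1.7}(a), and keeping only the $t=n-1$, $t'=1$ term, which carries the coefficient $q_i^{n-1}$. Your write-up is actually the more careful of the two. You keep the factor $[n]_i^{-1}$ coming from $a_i^{(n)}=[n]_i^{-1}a_i^{(n-1)}a_i$; the paper's proof drops this factor, and it is exactly what turns $([n-1]_i^!)^{-1}$ into $([n]_i^!)^{-1}$. Your hedge in the last paragraph is unnecessary, because $\sum_{k=0}^{n-1}\bigl((-1)^{p(i)}q_i^{2}\bigr)^k=q_i^{n-1}[n]_i$ holds exactly, with no normalization adjustment.
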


\begin{proof}
We proceed by induction on $n$.

\noindent\textbf{Base case: } $n = 1$.
Recall that $a_i^{(1)} = a_i / [1]_i = a_i / \bigl( ((-1)^{p(i)} q_i - q_i^{-1})/( (-1)^{p(i)} q_i - q_i^{-1} )\bigr) = a_i$.
From the definition of the bilinear form,
\[
(a_i, a_i) = \bigl(1 - (-1)^{p(i)} q_i^{2}\bigr)^{-1}.
\]
On the other hand,
\[
q_i^{\binom{1}{2}} \bigl(1 - (-1)^{p(i)} q_i^{2}\bigr)^{-1} \bigl([1]_i^{!}\bigr)^{-1}
= 1 \cdot \bigl(1 - (-1)^{p(i)} q_i^{2}\bigr)^{-1} \cdot 1,
\]
so the formula holds for $n = 1$.

\noindent\textbf{Induction step: } Assume the statement is true for $n-1$.
By the lemma above,
\begin{align*}
\bigl(a_i^{(n)}, a_i^{(n)}\bigr)
&= \bigl( \varrho(a_i^{(n)}), a_i^{(n-1)} \otimes a_i \bigr) \\
&= \Bigl( \sum_{t+t' = n} q_i^{tt'} a_i^{(t)} \otimes a_i^{(t')}, \; a_i^{(n-1)} \otimes a_i \Bigr).
\end{align*}
Since the form is orthogonal on different weight spaces, the only non-zero term occurs when $t = n-1$ and $t' = 1$. Thus,
\begin{align*}
\bigl(a_i^{(n)}, a_i^{(n)}\bigr)
&= q_i^{(n-1)\cdot 1} \bigl(a_i^{(n-1)}, a_i^{(n-1)}\bigr) \bigl(a_i, a_i\bigr) \\
&= q_i^{n-1} \Bigl( q_i^{\binom{n-1}{2}} \bigl(1 - (-1)^{p(i)} q_i^{2}\bigr)^{-(n-1)}
                \bigl([n-1]_i^{!}\bigr)^{-1} \Bigr)
                \bigl(1 - (-1)^{p(i)} q_i^{2}\bigr)^{-1} \\
&= q_i^{\binom{n-1}{2} + (n-1)} \bigl(1 - (-1)^{p(i)} q_i^{2}\bigr)^{-n}
   \bigl([n-1]_i^{!}\bigr)^{-1}.
\end{align*}
Using the identity $\binom{n}{2} = \binom{n-1}{2} + (n-1)$ and the fact that
$[n]_i^{!} = [n]_i \cdot [n-1]_i^{!}$, we obtain
\[
\bigl(a_i^{(n)}, a_i^{(n)}\bigr) =
q_i^{\binom{n}{2}} \bigl(1 - (-1)^{p(i)} q_i^{2}\bigr)^{-n}
\bigl([n]_i^{!}\bigr)^{-1}.
\]
This completes the induction and proves the lemma.
\end{proof}

\begin{proposition}\label{1.9}
The generators $a_{i}\ (i\in I^{\mathrm{re}})$ and $a_{jl}\ ((j,l)\in I^{\infty})$ of $U^{+}$ satisfy the relations
\[
\sum_{n+n'=1-la_{ij}}(-1)^{n'}(-1)^{p(i)\bigl(n'p(lj)+\binom{n'}{2}\bigr)}a_{i}^{(n)}a_{jl}a_{i}^{(n')}=0.
\]
\end{proposition}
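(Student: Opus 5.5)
Let $S\in F$ denote the element on the left-hand side,
\[
S=\sum_{n+n'=N}(-1)^{n'}(-1)^{p(i)\left(n'p(lj)+\binom{n'}{2}\right)}a_{i}^{(n)}a_{jl}a_{i}^{(n')},\qquad N=1-la_{ij}.
\]
The claim is that $S$ lies in the radical $\mathscr{R}$. Since $S$ is homogeneous of nonzero weight $N\alpha_i+l\alpha_j$, Lemma \ref{1.6}(a) reduces this to showing $\varrho_{k,m}(S)=0$ in $U^{+}$ for every $(k,m)\in I^{\infty}$. (One could equally use Lemma \ref{1.6}(b) with $\varrho^{k,m}$.) Note that the case $i=j$ is excluded: $i\in I^{\mathrm{re}}$ would force $a_{ii}=2$ and $N\le0$. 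So $j\neq i$, and $a_{ij}\le0$ gives $N\ge1$.

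First, if $(k,m)\notin\{(i,1),(j,l)\}$, then $\varrho_{k,m}$ kills every generator occurring in $S$. By the twisted Leibniz rule, $\varrho_{k,m}(S)=0$ trivially.

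Second, take $(k,m)=(j,l)$. I will use Lemma \ref{1.7}(a) to compute $\varrho_{j,l}$ on the divided powers. The only term of $\varrho(a_i^{(n')})$ of the form $(\cdot)\otimes a_{jl}$ is absent, so $\varrho_{j,l}(a_i^{(n')})=0$. The Leibniz rule then gives
\[
\varrho_{j,l}\bigl(a_i^{(n)}a_{jl}a_i^{(n')}\bigr)=(-1)^{p(lj)\,n'p(i)}q^{(l\alpha_j,n'\alpha_i)}a_i^{(n)}a_i^{(n')}.
\]
Using $a_i^{(n)}a_i^{(n')}=\begin{bmatrix} N\\ n\end{bmatrix}_i a_i^{(N)}$, we have $q^{(l\alpha_j,n'\alpha_i)}=q_i^{n'la_{ij}}=q_i^{n'(1-N)}$. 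The sum therefore becomes $a_i^{(N)}$ times
\[
\sum_{n'}(-1)^{n'+p(i)\binom{n'}{2}}q_i^{-n'(N-1)}\begin{bmatrix} N\\ n'\end{bmatrix}_i .
\]
Here the $p(lj)$ sign factors cancel exactly against those in the coefficient of $S$; this is the reason for the factor $n'p(lj)$ in the statement. This sum vanishes by identity (\ref{b}), after passing from $q_i$ to $q_i^{-1}$. For that I will apply the bar involution, or the symmetry $n'\leftrightarrow N-n'$ of the quantum binomial, and track the resulting $(-1)^{p(i)}$ powers.

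Third, take $(k,m)=(i,1)$. From Lemma \ref{1.7}(a), $\varrho_{i}(a_i^{(t)})=q_i^{t-1}a_i^{(t-1)}$, and $\varrho_i(a_{jl})=0$. The Leibniz rule applied to $a_i^{(n)}a_{jl}a_i^{(n')}$ gives two terms: one of the form $c_{n'}\,a_i^{(n)}a_{jl}a_i^{(n'-1)}$, and one of the form $d_{n,n'}\,a_i^{(n-1)}a_{jl}a_i^{(n')}$, with explicit powers of $q_i$ and signs. Collecting the coefficient of $a_i^{(s)}a_{jl}a_i^{(N-1-s)}$, I expect the contributions from $(n,n')=(s,N-1-s+1)$ and $(s+1,N-1-s)$ to cancel pairwise. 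This uses $N-1=-la_{ij}$, so that $q^{(\alpha_i,l\alpha_j)}=q_i^{1-N}$ exactly compensates the exponent mismatch. The resulting identity is $[s+1]$-free, so no nontrivial sum is needed and each term cancels individually.

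I expect the main obstacle to be the sign bookkeeping in the super setting: parities $p(a_i^{(n)})=np(i)$, the factors $(-1)^{p(i)\binom{n'}{2}}$, and the $(-1)^{p(i)}$ hidden in $[\,\cdot\,]_i$ must all match in both the second and third steps. I will carry out the $(i,1)$ computation first, since it dictates the normalization of the coefficients. Bar-consistency ($d_i\equiv p(i)$) should be what makes the $(-1)^{p(i)}$ from $q_i^2$ versus $(-1)^{p(i)}q_i^2$ line up.
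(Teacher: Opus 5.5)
Your strategy is the paper's. You reduce via Lemma~\ref{1.6}(a) to $\varrho_{k,m}(S)=0$, observe that only $(k,m)\in\{(i,1),(j,l)\}$ matter, cancel termwise for $(i,1)$, and reduce $(j,l)$ to identity~\eqref{b}. The $(i,1)$ case does cancel pair by pair as you expect, with no parity hypothesis. Writing $c_{n'}=(-1)^{n'}(-1)^{p(i)(n'p(lj)+\binom{n'}{2})}$ for the coefficients of $S$, the coefficient of $a_i^{(N-1-s)}a_{jl}a_i^{(s)}$ is $q_i^{s}\bigl(c_{s+1}+(-1)^{p(i)(s+p(lj))}c_s\bigr)=0$. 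Your formula for $\varrho_{j,l}(S)$, namely $a_i^{(N)}$ times $\sum_t(-1)^{t+p(i)\binom{t}{2}}q_i^{-t(N-1)}\begin{bmatrix}N\\ t\end{bmatrix}_i$, is also correct.

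The gap is in the final step of the $(j,l)$ case, the only place where the super structure really enters. To get back to~\eqref{b} you can substitute $q_i\mapsto(-1)^{p(i)}q_i^{-1}$ (which fixes $[n]_i$ and the binomials) or reindex $t\mapsto N-t$. Either way, each term picks up an extra factor $(-1)^{p(i)t(N-1)}$. For example, the substitution turns~\eqref{b} into
\[
\sum_t(-1)^{t+p(i)\binom{t}{2}}(-1)^{p(i)t(N-1)}q_i^{-t(N-1)}\begin{bmatrix}N\\ t\end{bmatrix}_i=0.
\]
This factor cannot be "tracked away" in general: if $p(i)=1$ and $N$ were $2$, your sum would equal $2-2q_i^{-2}\neq 0$. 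What removes it is condition (iii) of the super Borcherds--Cartan datum. For $i\in I_{\bar 1}$ we have $a_{ij}\in 2\Z$, so $N-1=-la_{ij}$ is even. This is exactly the observation the paper makes (``$1-N$ is even''). Bar-consistency, which you expect to make the signs line up, is not a hypothesis of Proposition~\ref{1.9} and cannot remove this factor. At most it lets you realize the substitution as the bar involution, via $\overline{q_i}=(-1)^{p(i)}q_i^{-1}$.
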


\begin{proof}
Set $N=1-la_{ij}$, so that $n+n'=N$.
We shall evaluate $\varrho_{k,t}$ on the sum for various $(k,t)\in I^{\infty}$.

First, if $(k,t)\neq i$ and $(k,t)\neq (j,l)$, then by the definition of $\varrho_{k,t}$,
\[
\varrho_{k,t}\bigl(a_{i}^{(n)}a_{jl}a_{i}^{(n')}\bigr)=0,
\]
hence
\[
\varrho_{k,t}\!\Bigl(\sum_{n+n'=N}(-1)^{n'}(-1)^{p(i)(n'p(lj)+\binom{n'}{2})}a_{i}^{(n)}a_{jl}a_{i}^{(n')}\Bigr)=0.
\]

Second, consider $(k,t)=(j,l)$.  Using the twisted Leibniz rule for $\varrho_{j,l}$,
\begin{align*}
\varrho_{j,l}\bigl(a_{i}^{(n)}a_{jl}a_{i}^{(n')}\bigr)
&=(-1)^{n'p(i)p(lj)}q_{i}^{\,n'la_{ij}}
  \begin{bmatrix}N\\ n'\end{bmatrix}_{i}a_{i}^{(N)}.
\end{align*}
Therefore,
\begin{align*}
&\varrho_{j,l}\!\Bigl(\sum_{n+n'=N}(-1)^{n'}(-1)^{p(i)(n'p(lj)+\binom{n'}{2})}a_{i}^{(n)}a_{jl}a_{i}^{(n')}\Bigr)\\
&=a_{i}^{(N)}\sum_{n+n'=N}(-1)^{n'}(-1)^{p(i)(n'p(lj)+\binom{n'}{2})}
   (-1)^{n'p(i)p(lj)}q^{\,n'ld_{i}a_{ij}}
   \begin{bmatrix}N\\ n'\end{bmatrix}_{i}\\
&=a_{i}^{(N)}\sum_{t=0}^{N}(-1)^{t+p(i)\binom{t}{2}}
   q^{\,tld_{i}a_{ij}}\begin{bmatrix}N\\ t\end{bmatrix}_{i}.
\end{align*}
Since $q_i=q^{d_i}$ , we have
$q^{\,tld_{i}a_{ij}}=q_i^{\,tla_{ij}}$.  Using $la_{ij}=1-N$, this becomes $q_i^{\,t(1-N)}$.  Hence the above expression equals
\[
a_{i}^{(N)}\sum_{t=0}^{N}(-1)^{t+p(i)\binom{t}{2}}
q_i^{\,t(1-N)}\begin{bmatrix}N\\ t\end{bmatrix}_{i}.
\]

Now assume $i\in I_{\bar{1}}$.  Then $1-N$ is even and $(-1)^{p(i)t(N-1)}=1$, so
\[
q_i^{\,t(1-N)}=\bigl((-1)^{p(i)}q_i^{-1}\bigr)^{\,t(N-1)}.
\]
Thus the sum transforms into
\[
a_{i}^{(N)}\sum_{t=0}^{N}(-1)^{t+p(i)\binom{t}{2}}
\bigl((-1)^{p(i)}q_i^{-1}\bigr)^{\,t(N-1)}\begin{bmatrix}N\\ t\end{bmatrix}_{i},
\]
which vanishes by identity (\ref{b}).  Consequently,
\[
\varrho_{j,l}\!\Bigl(\sum_{n+n'=N}(-1)^{n'}(-1)^{p(i)(n'p(lj)+\binom{n'}{2})}a_{i}^{(n)}a_{jl}a_{i}^{(n')}\Bigr)=0.
\]

Third, take $(k,t)=i$ (i.e. $\varrho_i:=\varrho_{i,1}$).  Applying the twisted Leibniz rule twice gives
\begin{align*}
&\varrho_i\!\Bigl(\sum_{n+n'=N}(-1)^{n'}(-1)^{p(i)(n'p(lj)+\binom{n'}{2})}
   a_{i}^{(n)}a_{jl}a_{i}^{(n')}\Bigr)\\
&=\sum_{n+n'=N}(-1)^{n'}(-1)^{p(i)(n'p(lj)+\binom{n'}{2})}
   \Bigl[ q_i^{n-1}(-1)^{p(i)p(lj)+n'p(i)}
          q^{(\alpha_i,\;l\alpha_j+n'\alpha_i)}
          a_{i}^{(n-1)}a_{jl}a_{i}^{(n')}\\
&\qquad\qquad\qquad\qquad\qquad\qquad\qquad
   +q_i^{\,n'-1}a_{i}^{(n)}a_{jl}a_{i}^{(n'-1)}\Bigr].
\end{align*}
In the first sum replace $n$ by $N-1-t$ and $n'$ by $t+1$ (so $t$ runs from $0$ to $N-1$); in the second sum set $t=n'$ (again $t$ runs from $0$ to $N-1$).  After this change of indices both sums become identical up to a sign, hence they cancel.  Explicitly,
\begin{align*}
&\sum_{t=0}^{N-1}(-1)^{t+1}(-1)^{p(i)\bigl((t+1)p(lj)+\binom{t+1}{2}\bigr)}
   q_i^{N-1-t-1}(-1)^{p(i)p(lj)+(t+1)p(i)}
   q^{(\alpha_i,\;l\alpha_j+(t+1)\alpha_i)}
   a_{i}^{(N-1-t-1)}a_{jl}a_{i}^{(t+1)}\\
&+\sum_{t=0}^{N-1}(-1)^{t}(-1)^{p(i)\bigl(tp(lj)+\binom{t}{2}\bigr)}
   q_i^{\,t-1}a_{i}^{(N-t)}a_{jl}a_{i}^{(t-1)}.
\end{align*}
A careful inspection shows that the two sums are actually negatives of each other, so their sum is zero.  Therefore,
\[
\varrho_i\!\Bigl(\sum_{n+n'=N}(-1)^{n'}(-1)^{p(i)(n'p(lj)+\binom{n'}{2})}
a_{i}^{(n)}a_{jl}a_{i}^{(n')}\Bigr)=0.
\]

We have now shown that for every $(k,t)\in I^{\infty}$,
\[
\varrho_{k,t}\!\Bigl(\sum_{n+n'=N}(-1)^{n'}(-1)^{p(i)(n'p(lj)+\binom{n'}{2})}
a_{i}^{(n)}a_{jl}a_{i}^{(n')}\Bigr)=0.
\]
By Lemma~\ref{1.6}, the element inside the parentheses must be zero, which is exactly the desired relation.
\end{proof}
\vskip 3mm

\begin{corollary}\label{1.10}
If $a_{ij}=0$, then
\[
a_{il}a_{jk} - (-1)^{p(li)p(kj)}a_{jk}a_{il}=0.
\]
\end{corollary}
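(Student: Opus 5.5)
The plan is to argue directly with Lemma~\ref{1.6}, in the same way as the proof of Proposition~\ref{1.9}. One cannot simply specialize Proposition~\ref{1.9}: it is stated only with $i\in I^{\mathrm{re}}$ in the first slot, and here both $(i,l)$ and $(j,k)$ may be imaginary with arbitrary $l,k$. (When $i\in I^{\mathrm{re}}$ and $l=1$, Proposition~\ref{1.9} with $N=1-ka_{ij}=1$ does give the relation up to the sign convention, and it serves as a consistency check.) So I set
\[
x=a_{il}a_{jk}-(-1)^{p(li)p(kj)}a_{jk}a_{il}\in F_{l\alpha_i+k\alpha_j}
\]
and show that $\varrho_{s,t}(x)=0$ for every $(s,t)\in I^{\infty}$. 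Lemma~\ref{1.6}(a) then gives $x=0$ in $U^{+}$, since the weight $l\alpha_i+k\alpha_j$ is nonzero.

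First, if $(s,t)\notin\{(i,l),(j,k)\}$, the twisted Leibniz rule together with $\varrho_{s,t}(a_{i'l'})=\delta_{si'}\delta_{tl'}$ gives $\varrho_{s,t}(x)=0$ immediately.

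Next, take $(s,t)=(i,l)$ and assume $(i,l)\neq(j,k)$. The Leibniz rule $\varrho_{i,l}(yz)=y\varrho_{i,l}(z)+(-1)^{p(li)p(z)}q^{(l\alpha_i,|z|)}\varrho_{i,l}(y)z$ gives
\[
\varrho_{i,l}(a_{il}a_{jk})=(-1)^{p(li)p(kj)}q^{(l\alpha_i,k\alpha_j)}a_{jk},\qquad \varrho_{i,l}(a_{jk}a_{il})=a_{jk}.
\]
Since $(l\alpha_i,k\alpha_j)=lkd_ia_{ij}=0$, the two contributions cancel in $x$. The case $(s,t)=(j,k)$ is symmetric:
\[
\varrho_{j,k}(a_{il}a_{jk})=a_{il},\qquad \varrho_{j,k}(a_{jk}a_{il})=(-1)^{p(kj)p(li)}a_{il},
\]
and these again cancel.

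The degenerate case $(i,l)=(j,k)$ must be treated separately. Here $a_{ii}=0$, so $i\in I^{\mathrm{iso}}$, and the claim becomes $\bigl(1-(-1)^{p(li)}\bigr)a_{il}^2=0$. If $p(li)=0$ this is trivial. If $p(li)=1$, I compute $\varrho_{i,l}(a_{il}^2)=a_{il}-a_{il}=0$, using $q^{(l\alpha_i,l\alpha_i)}=1$. Every other $\varrho_{s,t}$ also kills $a_{il}^2$, so Lemma~\ref{1.6} gives $a_{il}^2=0$.

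The main obstacle is purely bookkeeping. One must track the parity signs in the Leibniz rule correctly, in particular the exponent $p(li)p(z)$ where $z$ is the right factor, and make sure it matches the sign $(-1)^{p(li)p(kj)}$ in the statement. Everything else vanishes because $a_{ij}=0$ kills all the $q$-powers.
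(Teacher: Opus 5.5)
Your proof is correct and follows the paper's method. The paper proves the case $i,j\in I^{\mathrm{im}}$ exactly as you do, by checking $\varrho_{i',l'}(X)=0$ for all $(i',l')$ and invoking Lemma~\ref{1.6}; when one index is real it instead quotes Proposition~\ref{1.9}, which is itself proved by the same $\varrho$-argument. You run the direct computation uniformly in all cases, and your separate treatment of the degenerate case $(i,l)=(j,k)$ with $i\in I^{\mathrm{iso}}$ and $p(li)=1$ spells out a detail the paper leaves implicit.
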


\begin{proof}
Define $X = a_{il}a_{jk} - (-1)^{p(li)p(kj)}a_{jk}a_{il}$.

\textbf{Case 1:} $i \in I^{\mathrm{re}}$ or $j \in I^{\mathrm{re}}$.
Since $a_{ij}=0$, we have $i \neq j$ and $1 - k a_{ij}=1$.
Taking $l=k$ in Proposition~\ref{1.9}, the relation becomes
\[
\sum_{n+n'=1} (-1)^{n'} (-1)^{p(i)\bigl(n'p(kj)+\binom{n'}{2}\bigr)}
a_i^{(n)} a_{jk} a_i^{(n')}=0.
\]
The sum consists of two terms ($n=0,n'=1$ and $n=1,n'=0$):
\[
(-1)^1 (-1)^{p(i)\bigl(1\cdot p(kj)+0\bigr)} a_i^{(0)} a_{jk} a_i^{(1)}
+ (-1)^0 (-1)^{p(i)\bigl(0\cdot p(kj)+0\bigr)} a_i^{(1)} a_{jk} a_i^{(0)}
= -(-1)^{p(i)p(kj)} a_{jk} a_i + a_i a_{jk},
\]
which is precisely $X$. Hence $X=0$.

\textbf{Case 2:} $i,j \in I^{\mathrm{im}}$.
For any $(i',l') \in I^{\infty}$, a direct computation using the definition of $\varrho_{i'l'}$ shows that
\[
\varrho_{i'l'}(X)=0.
\]
By Lemma~\ref{1.6}, this forces $X=0$.

Combining both cases, we conclude that $X=0$ whenever $a_{ij}=0$.
\end{proof}

\vskip 3mm
\begin{theorem}
Assume that $i\in I^{re}, \ j\in I$ and $i\neq j$. Let $m\in \Z_{>0}, n\in \Z_{\ge0}$ and $m>-a_{ij}n$. Define
\begin{align*}
F_{i,j,m,n,\mathbf{c}}&=\sum_{r+s=m}(-1)^{r}(-1)^{nrp(i)p(j)+\binom{r}{2}p(i)}((-1)^{p(i)}q_{i})^{-r(na_{ij}+m-1)}a_{i}^{(r)}a_{j,\mathbf{c}}a_{i}^{(s)},\\
\end{align*}
 If $j\in I^{im}$, then $\mathbf{c}=(c_{1},\cdots,c_{b})$ such that $\sum_{t=1}^{b}c_{t}=n$ and $a_{j,\mathbf{c}}=\prod_{t=1}^{b}a_{jc_{t}}$. If $j\in I^{re}$, $a_{j\mathbf{c}}=a_{j}^{(n)}$.
Then $F_{i,j,m,n,\mathbf{c}}=0.$
\end{theorem}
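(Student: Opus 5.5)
We argue by induction on $m$ and apply Lemma~\ref{1.6}(a). Since $F_{i,j,m,n,\mathbf{c}}$ is homogeneous of nonzero weight $m\alpha_i+n\alpha_j$, it vanishes in $U^{+}$ once we show $\varrho_{k,t}(F_{i,j,m,n,\mathbf{c}})=0$ for all $(k,t)\in I^{\infty}$.

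The derivative $\varrho_{k,t}$ can be nonzero only for $(k,t)=i$ or $k=j$, because $F_{i,j,m,n,\mathbf{c}}$ involves only $a_i$ and the $a_{jc}$. The case $n=0$ is the classical statement for a single generator, and we handle it the same way.

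\textbf{Step 1: $\varrho_i$.} We apply the twisted Leibniz rule to $a_i^{(r)}a_{j,\mathbf{c}}a_i^{(s)}$. From Lemma~\ref{1.7}(a) we get $\varrho_i(a_i^{(s)})=q_i^{s-1}a_i^{(s-1)}$, and differentiating the factor $a_i^{(r)}$ produces the twist $(-1)^{p(i)(p(j,\mathbf{c})+s)}q^{(\alpha_i,|a_{j,\mathbf{c}}|+s\alpha_i)}$. We collect the result as a combination of the terms $a_i^{(r')}a_{j,\mathbf{c}}a_i^{(s')}$ with $r'+s'=m-1$, and we expect
\[
\varrho_i(F_{i,j,m,n,\mathbf{c}})=c\,F_{i,j,m-1,n,\mathbf{c}}
\]
for an explicit scalar $c$. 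The exponent $-r(na_{ij}+m-1)$ is built to absorb the $q$-powers $q^{(\alpha_i, n\alpha_j)}=q_i^{na_{ij}}$ and $q_i^{2s}$; the shift $m\mapsto m-1$ changes the exponent by exactly one factor $q_i^{r}$, and that is where the matching should come from. Two outcomes close this step. Either $m-1>-a_{ij}n$, and the induction hypothesis gives $F_{i,j,m-1,n,\mathbf{c}}=0$. Or $m-1=-a_{ij}n$, and then $c$ should contain a factor $[m]$-type vanishing, or the two collected sums cancel outright as in the third step of the proof of Proposition~\ref{1.9}. The base of the induction is $m=1-na_{ij}$, which reduces to this cancellation. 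The main obstacle is checking that the signs $(-1)^{nrp(i)p(j)+\binom r2 p(i)}$ combine consistently. I would handle this by using $a_{ij}\in2\Z$ for odd $i$ (condition (iii)) and the bar-consistency $d_i\equiv p(i)$, so that $(-1)^{p(i)}q_i$ behaves like a single variable $v$. Identity (\ref{b}) is then applied with $v$ in place of $q_i$.

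\textbf{Step 2: $\varrho_{j,t}$.} For $j\in I^{\mathrm{re}}$ we have $\varrho_j(a_j^{(n)})=q_j^{n-1}a_j^{(n-1)}$. For $j\in I^{\mathrm{im}}$, $\varrho_{j,t}(a_{j,\mathbf{c}})$ is a sum over the positions $u$ with $c_u=t$ of twisted products $a_{j,\mathbf{c}'}$, where $\mathbf{c}'$ is $\mathbf{c}$ with $c_u$ removed. These twists are $q^{(t\alpha_j,\cdot)}$ factors coming from the $a_{jc}$ to the right of position $u$, together with the factor $q^{(t\alpha_j,s\alpha_i)}=q_i^{sta_{ij}}$ and a sign coming from $a_i^{(s)}$. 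Thus
\[
\varrho_{j,t}(F_{i,j,m,n,\mathbf{c}})=\sum_u \kappa_u \sum_{r+s=m}(\pm)\,(-1)^{p(i)}{}^{\ast}\,(\ldots)\,a_i^{(r)}a_{j,\mathbf{c}'}a_i^{(s)}\cdot(\text{a factor }q_i^{sta_{ij}}).
\]
We rewrite $q_i^{sta_{ij}}=q_i^{(m-r)ta_{ij}}$. Pulling out $q_i^{mta_{ij}}$ turns the exponent $-r(na_{ij}+m-1)$ into $-r((n-t)a_{ij}+m-1)$, so each inner sum is a scalar multiple of $F_{i,j,m,n-t,\mathbf{c}'}$. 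Since $m>-a_{ij}n\ge -a_{ij}(n-t)$, this vanishes by induction on $n$ (total weight), with $n=0$ as the base. The parity signs need the same care as in Step 1: the twist $(-1)^{p(jt)p(a_i^{(s)})}$ must shift $nrp(i)p(j)$ to $(n-t)rp(i)p(j)$ up to an overall factor, which holds since $p(a_i^{(s)})=sp(i)=(m-r)p(i)$.

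Combining both steps, every $\varrho_{k,t}(F_{i,j,m,n,\mathbf{c}})$ vanishes, and Lemma~\ref{1.6} gives $F_{i,j,m,n,\mathbf{c}}=0$.
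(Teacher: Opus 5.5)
There is a genuine gap in Step 2. Your overall strategy (kill all derivatives, apply Lemma~\ref{1.6}, induct on $m$ and $n$) is the paper's. However, you chose the right-handed derivations $\varrho_{k,t}$, while the normalization of $F_{i,j,m,n,\mathbf c}$ attaches the $q$-power to the \emph{left} exponent $r$, which suits the left-handed $\varrho^{k,t}$. For $\varrho_{j,t}$ the twist comes from the right factor $a_i^{(s)}$, namely $q^{(t\alpha_j,s\alpha_i)}=q_i^{sta_{ij}}$. With $s=m-r$, and using $(-1)^{p(i)rta_{ij}}=1$, this gives
\[
q_i^{sta_{ij}}\bigl((-1)^{p(i)}q_i\bigr)^{-r(na_{ij}+m-1)}
=q_i^{mta_{ij}}\bigl((-1)^{p(i)}q_i\bigr)^{-r((n+t)a_{ij}+m-1)} .
\]
So the exponent becomes $-r((n+t)a_{ij}+m-1)$, not $-r((n-t)a_{ij}+m-1)$; you dropped a sign. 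Consequently $\varrho_{j,t}(F_{i,j,m,n,\mathbf c})$ is a combination of sums that are not of the form $F_{i,j,m,n-t,\mathbf c'}$, and your induction hypothesis does not apply to them. Concretely, take $p(i)=0$, $j$ real, $a_{ij}=-1$, $n=2$, $m=3$. You get a multiple of $\sum_r(-1)^rq_i^{r}a_i^{(r)}a_ja_i^{(3-r)}$, whereas the hypothesis concerns $F_{i,j,3,1}=\sum_r(-1)^rq_i^{-r}a_i^{(r)}a_ja_i^{(3-r)}$. The resulting element is of course zero once the theorem is known. Proving that directly, however, would need a stronger induction covering a whole range of exponents (at $m=1-na_{ij}$ these sums are, up to a scalar, the mirror family with the $q$-power attached to $s$), and you have not set this up.

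The fix is to use $\varrho^{k,t}$ and Lemma~\ref{1.6}(b), as the paper does. Equivalently, apply your $\varrho_{k,t}$ to $\sigma(F_{i,j,m,n,\mathbf c})$ via Lemma~\ref{1.4}. Then $\varrho^{j,t}$ picks up $q^{(r\alpha_i,t\alpha_j)}=q_i^{rta_{ij}}$ from $a_i^{(r)}$, which turns $-r(na_{ij}+m-1)$ exactly into $-r((n-t)a_{ij}+m-1)$. For the other derivation one gets $\varrho^{i}(F_{i,j,m,n,\mathbf c})=(-1)^{np(i)p(j)}q_i^{m-1+na_{ij}}\bigl(1-(-1)^{p(i)(m-1)}q_i^{-2(m-1+na_{ij})}\bigr)F_{i,j,m-1,n,\mathbf c}$. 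This scalar vanishes precisely when $m-1=-na_{ij}$, which is the boundary mechanism you were looking for. Induction on $m$ covers the case $m-1>-na_{ij}$. Incidentally, your Step 1 is fine for a simpler reason than you anticipate. After collecting coefficients, the two contributions to each $a_i^{(r')}a_{j,\mathbf c}a_i^{(s')}$ cancel because $(-1)^{p(i)na_{ij}}=1$. So $\varrho_i(F_{i,j,m,n,\mathbf c})=0$ identically for every $m$, and no case distinction arises there. This does not rescue Step 2.
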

\begin{proof}
We only have to take care of $j\in I^{im}$.
If $n=0$,  using bar involution on \ref{b}, we have
$$F_{i,j,m,0}=\sum_{r+s=m}(-1)^{r}(-1)^{\binom{r}{2}p(i)}(-1)^{-rp(i)(m-1)}q_{i}^{-r(m-1)}
\begin{bmatrix}
a \\
t
\end{bmatrix}_{i}a_{i}^{(m)}=0.$$
if $n>0$ and $k=c_{t}$ for some t, we have
\begin{align*}
\varrho^{j,k}(a_{i}^{(r)}a_{j,\mathbf{c}}a_{i}^{(s)})=(-1)^{p(kj)p(ri)}q^{(r\alpha_{i},k\alpha_{j})}a_{i}^{(r)}\varrho^{j,k}(a_{j,\mathbf{c}})a_{i}^{(s)}.
\end{align*}
and $$\varrho^{j,k}(a_{j,\mathbf{c}})=\sum_{t:c_{t}=k}(-1)^{p(kj)p(c_{<t}j)}q^{(k\alpha_{j},c_{<t}\alpha_{\alpha_{j}})}\hat{a}_{j,\mathbf{c_{t}}},$$
where $\hat{a}_{j,\mathbf{c_{t}}}=a_{jc_{1}}\cdots a_{j,c_{t-1}}a_{j,c_{t+1}}\cdots a_{j,c_{b}}$ and $c_{<t}=\sum_{i=1}^{t-1}c_{i}$.

Note that $m>-na_{ij}\ge -a_{ij}(n-k)$.
If $i\in I_{\bar{0}}$, then $p(i)=0$. If $i\in I_{\bar{1}}$, then $a_{ij}$ is even. In both cases, we have
\begin{align*}
&(-1)^{p(kj)p(ri)}q^{(r\alpha_{i},k\alpha_{j})}(-1)^{nrp(i)p(j)+\binom{r}{2}p(i)}((-1)^{p(i)}q_{i})^{-r(na_{ij}+m-1)}\\
&=(-1)^{(n-k)rp(i)p(j)+\binom{r}{2}p(i)}((-1)^{p(i)}q_{i})^{-r((n-k)a_{ij}+m-1)}.
\end{align*}
Hence each summand of $\varrho^{j,k}(F_{i,j,m,n,\mathbf{c}})$ is a scalar multiple of $F_{i,j,m,n-k,\mathbf{c}'}$.

On the other hand, $$\varrho^{i}(a_{i}^{(r)}a_{j,\mathbf{c}}a_{i}^{(s)})=q_{i}^{r-1}a_{i}^{(r-1)}a_{j,\mathbf{c}}a_{i}^{s}+(-1)^{p(i)p(ri)+p(i)p(nj)}q^{(\alpha_{i},r\alpha_{i}+n\alpha_{j})}q^{s-1}a_{i}^{r}a_{j,\mathbf{c}}a_{i}^{(s-1)}.$$
Then
$$\varrho^{i}(F_{i,j,m,n})=(-1)^{np(i)p(j)}q_{i}^{m-1+na_{ij}}(1-(-1)^{p(i)(m-1)}q_{i}^{-2(m-1+na_{ij})})F_{i,j,m-1,n,\mathbf{c}}.$$
By induction and lemma \ref{1.6}, we get $F_{i,j,m,n,\mathbf{c}}=0$.
\end{proof}

\vskip 3mm
\section{The Quantum Borcherds-Bozec Superalgebras}

 \begin{definition} \label{def:BBsuper}
 {\rm
The {\it quantum Borcherds-Bozec superalgebra} $U_{q}(\g)$ associated with
 a super Borcherds-Bozec Cartan datum $(A, P, P^{\vee},
 \Pi,\Pi^{\vee}, I_{\bar{1}})$ is associative $\Q(q)$ superalgebra
 generated by the elements $a_{il}$, $b_{il}$ $((i,l)\in I^{\infty})$ and $q^{h}, h\in P^{\vee}$ where the parity is given by $p(a_{il})=p(b_{il})=p(li)$ and $p(q^{h})=0$
 with defining relations
 \begin{equation} \label{eq:BBsuper}
  \begin{aligned}
  & q^{0}=0,\ \ q^{h}q^{h'}=q^{h+h'}, \\
  & q^{h}a_{il}q^{-h}=q^{l\langle h,\alpha_{i}\rangle}a_{il},\ \ q^{h}b_{il}q^{-h}=q^{-l\langle h,\alpha_{i}\rangle}b_{il}, \\
  & \sum_{t+t'=1-ka_{ij}}(-1)^{t'}(-1)^{p(i)(t'p(kj)+\binom{t'}{2})}\begin{bmatrix}1-ka_{ij}\\t'\end{bmatrix}_{i}a_{i}^{t}a_{jk}a_{i}^{t'}=0,\ \ i\in I^{re}, i\neq(j,k)\in I^{\infty}, \\
  & \sum_{t+t'=1-ka_{ij}}(-1)^{t'}(-1)^{p(i)(t'p(kj)+\binom{t'}{2})}\begin{bmatrix}1-ka_{ij}\\t'\end{bmatrix}_{i}b_{i}^{t}b_{jk}b_{i}^{t'}=0,\ \ i\in I^{re}, i\neq(j,k)\in I^{\infty}, \\
  & a_{il}a_{jk}-(-1)^{p(li)p(kj)}a_{jk}a_{il}=b_{il}b_{jk}-(-1)^{p(li)p(kj)}b_{jk}b_{il}=0,\ \ \text{if}\ a_{ij}=0,  \\
  & a_{il}b_{jk}-(-1)^{p(li)p(kj)}b_{jk}a_{il}=\delta_{ij}\delta_{kl}\frac{K_{i}^{l}-K_{i}^{-l}}{1-(-1)^{p(li)}q_{i}^{2l}},\ K_{i}^{l}=q^{ld_{i}h_{i}}.
  \end{aligned}
 \end{equation}
We shall denote by $U$ (resp. $U^{+},\ U^{0}$ and $U^{-}$) for $U_{q}(\g)$ (resp. $U_{q}^{+}(\g),\ U_{q}^{0}(\g)$ and $U_{q}^{-}(\g)$).

 }
 \end{definition}
There is a unique automorphism $\omega:\ U\to U$ such that
$$\omega(a_{il})=(-1)^{p(li)}b_{il},\ \omega(b_{il})=a_{il},\ \omega(q^{h})=q^{-h}$$
for $(i,l)\in I^{\infty},\ h\in P^{\vee}$. We have $\omega(x^{+})=(-1)^{p(x)}x^{-},\ \omega(x^{-})=x^{+}$ for all $x\in F$.

Similarly, there is a unique isomorphism of $\Q(q)$ vector spaces $\sigma: U\to U$ such that
$$\sigma(a_{il})=a_{il},\ \sigma(b_{il})=(-1)^{p(li)}b_{il},\ \sigma(q^{h})=q^{-h}$$
for $(i,l)\in I^{\infty},\ h\in P^{\vee}$ such that $\sigma(uu')=\sigma(u')\sigma(u)$ for $u,u'\in U$. We have
$\sigma(x^{+})=\sigma(x)^{+},\ \sigma(x^{-})=(-1)^{p(x)}\sigma(x)^{-}.$

 \vskip 2mm
\begin{lemma}
There is a unique algebra homomorphism $\Delta:\ U\to U\otimes U$, where $U\otimes U$ is regarded as a superalgebra in the standard way, given by
$$\Delta(q^{h})=q^{h}\otimes q^{h},$$
$$\Delta(a_{il})=a_{il}\otimes 1+K_{i}^{l}\otimes a_{il},$$
$$\Delta(b_{il})=b_{il}\otimes K_{i}^{-l}+1\otimes b_{il}.$$
\end{lemma}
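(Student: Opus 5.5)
The plan is to define $\Delta$ on the free superalgebra generated by $a_{il}$, $b_{il}$ $((i,l)\in I^{\infty})$ and $q^{h}$ $(h\in P^{\vee})$ by the stated formulas, and then check that every defining relation of Definition~\ref{def:BBsuper} is sent to zero in $U\otimes U$. Here $U\otimes U$ carries the super multiplication $(x_1\otimes x_2)(x_3\otimes x_4)=(-1)^{p(x_2)p(x_3)}x_1x_3\otimes x_2x_4$. Uniqueness is automatic, since the images of the generators are prescribed.

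The easy relations come first.
\begin{itemize}
\item[(1)] The relations among the $q^{h}$ are immediate, since $q^{h}\otimes q^{h}$ is group-like.
\item[(2)] The weight relations $q^{h}a_{il}q^{-h}=q^{l\langle h,\alpha_i\rangle}a_{il}$ and their analogues for $b_{il}$ follow because $K_i^{\pm l}$ commutes with $q^{h}$, and conjugation by $q^{h}\otimes q^{h}$ scales each tensor factor.
\item[(3)] For the mixed relation, expand $\Delta(a_{il})\Delta(b_{jk})-(-1)^{p(li)p(kj)}\Delta(b_{jk})\Delta(a_{il})$. The cross terms $a_{il}\otimes b_{jk}$ and $K_i^{l}b_{jk}\otimes K_j^{-k}a_{il}$ cancel against their counterparts. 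This uses $K_i^{l}b_{jk}=q^{-lk d_i a_{ij}}b_{jk}K_i^{l}$ together with the sign rule of the super tensor product; the consistency assumption ($a_{ij}$ even when $i\in I_{\bar 1}$) makes the residual signs match.
\item[(4)] What remains is $\delta_{ij}\delta_{kl}\,(1-(-1)^{p(li)}q_i^{2l})^{-1}$ times $\bigl((K_i^l-K_i^{-l})\otimes K_i^{-l}+K_i^{l}\otimes(K_i^l-K_i^{-l})\bigr)$, which equals $\Delta\bigl((K_i^l-K_i^{-l})/(1-(-1)^{p(li)}q_i^{2l})\bigr)$, as required.
\item[(5)] The relation for $a_{ij}=0$ is handled the same way. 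The terms $a_{il}\otimes a_{jk}$ and $K_i^{l}a_{jk}\otimes a_{il}$ cancel in pairs, because $K_i^{l}$ commutes with $a_{jk}$ when $a_{ij}=0$.
\end{itemize}

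For the Serre relations I will transport the computation into $F$. Consider the linear map $\Phi:F\otimes F\to U\otimes U$ given by $x\otimes y\mapsto x^{+}K_{|y|}\otimes y^{+}$, where $x\mapsto x^{+}$ sends $a_{il}$ to $a_{il}$ and $K_{\nu}=\prod K_i^{\nu_i}$. Using $K_{|x_2|}x_3^{+}=q^{(|x_2|,|x_3|)}x_3^{+}K_{|x_2|}$, the super sign rule shows that $\Phi$ is an algebra homomorphism from the twisted algebra $F\otimes F$ of Section~1. Since $\Phi(\varrho(a_{il}))=\Delta(a_{il})$, we get $\Delta(x^{+})=\Phi(\varrho(x))$ for all $x\in F$.

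Let $S$ be the Serre element in the $a$'s. We have $\begin{bmatrix}N\\t'\end{bmatrix}_i a_i^{t}a_{jk}a_i^{t'}=[N]_i^{!}\,a_i^{(t)}a_{jk}a_i^{(t')}$ with $N=1-ka_{ij}$, so $S$ is $[N]_i^{!}$ times the element of Proposition~\ref{1.9}. Expand $\varrho(S)$ using Lemma~\ref{1.7}(a) for $a_i^{(t)}$ and $\varrho(a_{jk})=a_{jk}\otimes 1+1\otimes a_{jk}$. Group the result by bidegree $(p\alpha_i+\epsilon k\alpha_j,\ (N-p)\alpha_i+(1-\epsilon)k\alpha_j)$ with $\epsilon\in\{0,1\}$. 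The extreme bidegrees give $S\otimes 1+1\otimes S$. Each intermediate bidegree should collapse to $a_i^{(p)}\otimes F_{i,j,m,1,(k)}$ or $F_{i,j,m,1,(k)}\otimes a_i^{(p)}$ with $m>-ka_{ij}$, up to a scalar; alternatively it may produce a $q$-binomial sum that vanishes by (\ref{b}), as in the $n=0$ case of the higher Serre theorem. Since $m\le N$, a summand of the form $F_{i,j,m,1,(k)}$ lies in the left ideal generated by $S$ after reordering, as in Lusztig's argument for the higher Serre relations. Hence $\Phi(\varrho(S))$ lies in the ideal of $U\otimes U$ generated by $S\otimes 1$ and $1\otimes S$, and that ideal is zero.

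For the $b$-Serre relations I will run the same argument with the map $y\otimes x\mapsto y^{-}\otimes K_{-|y|}x^{-}$, or else reduce to the $a$ case through the automorphism $\omega$ combined with the flip of tensor factors. The main obstacle is the bookkeeping in the $a$-Serre step. One has to confirm that the intermediate bidegree coefficients, which involve the signs $(-1)^{p(i)(\cdots)}$ and the powers $q_i^{tt'}$, really reduce to the combinations $F_{i,j,m,n,\mathbf{c}}$ of the higher Serre theorem. I would first test this on the bidegree containing $a_i\otimes(\cdot)$ before treating general $p$.
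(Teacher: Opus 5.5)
The paper states this lemma without proof. It only records afterwards that $j^{+}\varrho(x)=\Delta(x^{+})$, and your $\Phi$ is exactly $j^{+}$, so your argument has to stand on its own. Items (1)--(5) are fine. Note that (3) needs no parity hypothesis: the cross terms $K_i^{l}b_{jk}\otimes a_{il}K_j^{-k}$ and $b_{jk}K_i^{l}\otimes K_j^{-k}a_{il}$ carry the same sign, and their $q$-powers cancel because $d_ia_{ij}=d_ja_{ji}$.

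The gap is in the Serre step, which is the only substantive part of the lemma, and your main mechanism there cannot work. In every intermediate bidegree of $\varrho(S)$, the tensor factor containing $a_{jk}$ has at most $N-1=-ka_{ij}$ copies of $a_i$. So the higher Serre theorem, which requires $m>-ka_{ij}$, never applies. The ideal generated by $S$ also contains nothing nonzero in those degrees, so your ``since $m\le N$'' step is vacuous. (Also, $F_{i,j,m,1,(k)}$ should read $F_{i,j,m,k,(k)}$, since $n=\sum c_t$.) Moreover, the defining relations of $U^{+}$ are homogeneous in each generator separately, and every relation involving only $a_i$'s and a single $a_{jk}$ has at least $N$ copies of $a_i$. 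Hence the monomials $a_i^{r}a_{jk}a_i^{s}$ with $r+s<N$ are untouched by the relations, and no relation of $U$ can absorb these components: they must vanish already in $F\otimes F$.

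What you actually need, and do not prove, is that $S$ is primitive: $\varrho(S)=S\otimes 1+1\otimes S$ exactly in $F\otimes F$. Here is one way to get it.

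Put $Q=(-1)^{p(i)}q_i^{2}$ and $\epsilon=(-1)^{p(i)p(kj)}q_i^{ka_{ij}}$. We have $\begin{bmatrix}N\\ t'\end{bmatrix}_i=q_i^{-t'(N-t')}\binom{N}{t'}_{Q}$, a Gaussian binomial in $Q$, and $ka_{ij}=1-N$. Hence the coefficient of $a_i^{N-t'}a_{jk}a_i^{t'}$ in $S$ is $(-1)^{t'}Q^{\binom{t'}{2}}\epsilon^{t'}\binom{N}{t'}_{Q}$. That is, $S=\mathrm{ad}(a_i)^{N}(a_{jk})$ for the braided commutator $\mathrm{ad}(a_i)(y)=a_iy-(-1)^{p(i)p(y)}q^{(\alpha_i,|y|)}ya_i$, which matches the twist of $F\otimes F$.

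The standard coproduct formula for iterated braided commutators (provable by induction on $N$) then gives two facts. First, the only component of $\varrho(S)$ with $a_{jk}$ in the left factor is $S\otimes 1$. Second, for $0\le s<N$, the component $a_i^{N-s}\otimes \mathrm{ad}(a_i)^{s}(a_{jk})$ has coefficient $\binom{N}{s}_{Q}\prod_{r=s}^{N-1}(1-Q^{r}\epsilon^{2})$. Each such product contains the factor $1-Q^{N-1}\epsilon^{2}=1-(-1)^{p(i)ka_{ij}}$. This is $0$ exactly because $a_{ij}\in 2\mathbb{Z}$ for $i\in I_{\bar 1}$; that, not the mixed relation, is where the parity condition enters. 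For $s=0$ this is the $q$-binomial sum you list as your alternative (a barred form of (\ref{b})), but the mixed components genuinely need the full computation.

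It then follows that $\Delta(S^{+})=S^{+}\otimes 1+K_{N\alpha_i+k\alpha_j}\otimes S^{+}=0$. Your reduction of the $b$-relations via $\omega$ and the super flip $\tau$ goes through, because $(\omega\otimes\omega)\circ\Delta=\tau\circ\Delta\circ\omega$ on generators.
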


Define maps $j^{\pm}:\ F\otimes F\to U\otimes U$ by
$$j^{+}(x\otimes y)=x^{+}K_{|y|}\otimes y^{+},\ j^{-}(x\otimes y)=x^{-}\otimes K_{-|x|}y^{-}. $$
These maps are algebra homomorphisms and satisfy
$$j^{+}\varrho(x)=\Delta(x^{+}),\ \ j^{-}\bar{\varrho}(x)=\Delta(x^{-}).$$

Write $\varrho(x)=\sum x_{1}\otimes x_{2}$, we have
$$\Delta(x^{+})=\sum x_{1}^{+}K_{|x_{2}|}\otimes x_{2}^{+} ,$$
$$\Delta(x^{-})=\sum (-1)^{p(x_{1})p(x_{2})}q^{-(|x_{1}|,|x_{2}|)}x_{2}^{-}\otimes K_{-|x_{2}|}x_{1}^{-}.$$
In particular, when $i\in I^{re}$, we have
$$\Delta(a_{i}^{(n)})=\sum_{t+t'=n}q_{i}^{tt'}a_{i}^{(t)}K_{i}^{t'}\otimes a_{i}^{(t')},$$
$$\Delta(b_{i}^{(n)})=\sum_{t+t'=n}((-1)^{p(i)}q_{i})^{-tt'}b_{i}^{(t)}\otimes K_{i}^{-t}b_{i}^{(t')}.$$
\vskip 2mm

\begin{proposition}\label{2.3}
For $x\in F$ and $(i,l)\in I^{\infty}$, we have
\begin{itemize}
\item[(a)] $x^{+}b_{il}-(-1)^{p(li)p(x)}b_{il}x^{+}=\frac{\varrho_{i,l}(x)^{+}K_{i}^{l}-(-1)^{p(li)p(x)-p(li)}K_{i}^{-l}\varrho^{i,l}(x)^{+}}{1-(-1)^{p(li)}q_{i}^{2l}},$
\item[(b)]$a_{il}x^{-}-(-1)^{p(li)p(x)}x^{-}a_{il}=\frac{K_{i}^{l}\varrho^{i,l}(x)^{-}-(-1)^{p(li)p(x)-p(li)}\varrho_{i,l}(x)^{-}K_{i}^{-l}}{1-(-1)^{p(li)}q_{i}^{2l}}.$
\end{itemize}

\end{proposition}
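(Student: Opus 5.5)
The plan is to prove (a) by induction on the height of $|x|$, i.e. on the length of a monomial in the generators $a_{jk}$. Part (b) will then follow by applying the automorphism $\omega$ to (a), or by repeating the same induction with the roles of $\varrho_{i,l}$ and $\varrho^{i,l}$ interchanged. Both sides of (a) are linear in $x$, so it suffices to treat products of generators. For $x=1$ both sides vanish, since $\varrho_{i,l}(1)=\varrho^{i,l}(1)=0$.

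For $x=a_{jk}$, the left-hand side is exactly the defining relation $a_{jk}b_{il}-(-1)^{p(li)p(kj)}b_{il}a_{jk}=\delta_{ij}\delta_{kl}(K_i^l-K_i^{-l})/(1-(-1)^{p(li)}q_i^{2l})$. On the right-hand side $\varrho_{i,l}(a_{jk})=\varrho^{i,l}(a_{jk})=\delta_{ij}\delta_{kl}$. When $(j,k)=(i,l)$ we have $p(x)=p(li)$, so the sign $(-1)^{p(li)p(x)-p(li)}$ equals $(-1)^{p(li)^2-p(li)}=1$. The two sides therefore agree.

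For the inductive step, assume (a) holds for homogeneous $x$ and $y$ and consider $xy$. I write the super-commutator as
\[
(xy)^+b_{il}-(-1)^{p(li)(p(x)+p(y))}b_{il}(xy)^+
= x^+\bigl(y^+b_{il}-(-1)^{p(li)p(y)}b_{il}y^+\bigr)
+(-1)^{p(li)p(y)}\bigl(x^+b_{il}-(-1)^{p(li)p(x)}b_{il}x^+\bigr)y^+ .
\]
Next I insert the induction hypothesis into both brackets. This produces four terms:
\[
x^+\varrho_{i,l}(y)^+K_i^l,\qquad x^+K_i^{-l}\varrho^{i,l}(y)^+,\qquad \varrho_{i,l}(x)^+K_i^ly^+,\qquad K_i^{-l}\varrho^{i,l}(x)^+y^+ .
\]
In the middle two terms I move $K_i^{\pm l}$ to the outside using $K_i^{l}y^+K_i^{-l}=q^{(l\alpha_i,|y|)}y^+$, which comes from $q^h a_{jk}q^{-h}=q^{k\langle h,\alpha_j\rangle}a_{jk}$ together with $\langle ld_ih_i,\alpha_j\rangle = l(\alpha_i,\alpha_j)$. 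After this, the $K_i^l$-terms combine into $\bigl(x\varrho_{i,l}(y)+(-1)^{p(li)p(y)}q^{(l\alpha_i,|y|)}\varrho_{i,l}(x)y\bigr)^+K_i^l$, which is $\varrho_{i,l}(xy)^+K_i^l$ by the twisted Leibniz rule. Similarly, the $K_i^{-l}$-terms combine into $K_i^{-l}\bigl(\varrho^{i,l}(x)y+(-1)^{p(li)p(x)}q^{(l\alpha_i,|x|)}x\varrho^{i,l}(y)\bigr)^+=K_i^{-l}\varrho^{i,l}(xy)^+$. Here I use that $x\mapsto x^+$ is an algebra homomorphism $F\to U$, which holds because the image of $\mathscr R$ vanishes.

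The main obstacle is the bookkeeping of the parity signs. The factors $(-1)^{p(li)p(y)}$, $(-1)^{p(li)p(x)-p(li)}$, and $(-1)^{p(li)p(y)-p(li)}$ have to combine into $(-1)^{p(li)p(xy)-p(li)}$ in front of $K_i^{-l}\varrho^{i,l}(xy)^+$. To do this I will use that $\varrho_{i,l}(x)$ and $\varrho^{i,l}(x)$ have parity $p(x)+p(li)$, and that $p(li)^2=p(li)$ modulo $2$. The $q$-powers should match automatically, exactly as in Lemma~\ref{1.4}. If a stray sign appears, I will first test the case $x=y=a_{il}$ to pin down the correct convention.
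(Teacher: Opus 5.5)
Your proposal is correct and is essentially the paper's proof: you verify (a) on generators via the defining relation, then for a product $xy$ insert the induction hypothesis, move $K_i^{\pm l}$ across $x^{+}$ or $y^{+}$ using $K_i^{l}y^{+}K_i^{-l}=q^{(l\alpha_i,|y|)}y^{+}$, recombine with the two twisted Leibniz rules, and deduce (b) by applying $\omega$ (the paper writes $\omega^{-1}$); the parity signs combine directly, since $(-1)^{p(li)p(y)}(-1)^{p(li)p(x)-p(li)}=(-1)^{p(li)p(xy)-p(li)}$ and $(-1)^{2p(li)p(x)}=1$. One correction: $x\mapsto x^{+}$ is an algebra homomorphism $F\to U$ simply because $F$ is free, so you do not need the vanishing of the image of $\mathscr R$ in $U$, which is not established at this point of the paper anyway.
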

\begin{proof}
(a) holds for the generators. Assume that (a) is correct for $x'$ and $x''$, we will show it holds for $x=x'x''$.
\begin{align*}
x'^{+}x''^{+}b_{il}&=x'^{+}((-1)^{p(li)p(x'')}b_{il}x''^{+}+\frac{\varrho_{i,l}(x'')^{+}K_{i}^{l}-(-1)^{p(li)p(x'')-p(li)}K_{i}^{-l}\varrho^{i,l}(x'')^{+}}{1-(-1)^{p(li)}q_{i}^{2l}})\\
&=(-1)^{p(li)p(x'')}\frac{\varrho_{i,l}(x')^{+}K_{i}^{l}-(-1)^{p(li)p(x')-p(li)}K_{i}^{-l}\varrho^{i,l}(x')^{+}}{1-(-1)^{p(li)}q_{i}^{2l}}x''^{+}\\
&+x'^{+}\frac{\varrho_{i,l}(x'')^{+}K_{i}^{l}-(-1)^{p(li)p(x'')-p(li)}K_{i}^{-l}\varrho^{i,l}(x'')^{+}}{1-(-1)^{p(li)}q_{i}^{2l}}\\
&+(-1)^{p(li)p(x'x'')}b_{il}x'^{+}x''^{+}\\
&=(-1)^{p(li)p(x'x'')}b_{il}x'^{+}x''^{+}+\frac{x'\varrho_{i,l}(x'')^{+}K_{i}^{l}+(-1)^{p(li)p(x'')}q^{(l\alpha_{i},|x''|)}\varrho_{i,l}(x')^{+}x''^{+}K_{i}^{l}}{1-(-1)^{p(li)}q_{i}^{2l}})\\
&-\frac{(-1)^{p(li)p(x'x'')-p(li)}(K_{i}^{-l}\varrho^{i,l}(x')^{+}x''+K_{i}^{-l}(-1)^{p(li)p(x')-p(li)}q^{(l\alpha_{i},|x'|)}\varrho^{i,l}(x'')^{+})}{1-(-1)^{p(li)}q_{i}^{2l}}\\
&=(-1)^{p(li)p(x'x'')}b_{il}x'^{+}x''^{+}+\frac{\varrho_{i,l}(x'x'')^{+}K_{i}^{l}-(-1)^{p(li)p(x'x'')-p(li)}K_{i}^{-l}\varrho^{il}(x'x'')^{+}}{1-(-1)^{p(li)}q_{i}^{2l}}.\\
\end{align*}
 Apply $\omega^{-1}$ to (a) and  we get (b).
\end{proof}

\vskip 3mm
Let $M$ be a $\U$-module. $M=M_{\bar{0}}\oplus M_{\bar{1}}$, we say that M has a weight space decomposition if
\[M=\bigoplus_{\mu\in P}M_{\mu}, \ M_{\mu}=\{m\in M|q^{h}m=q^{\langle h,\mu\rangle}m,\ \forall h\in P^{\vee}\},\]
such that $M_{\mu}=M_{\mu,\bar{0}}\oplus M_{\mu,\bar{1}}$, where $M_{\mu,\bar{0}}=M_{\mu}\cap M_{\bar{0}} $ and $M_{\mu,\bar{1}}=M_{\mu}\cap M_{\bar{1}} $.

We denote $wt(M)=\{\mu\in \h^{*}|M_{\mu}\neq 0\}.$ When $\text{dim}\ M_{\mu}<\infty$ for all $\mu\in P$, we define the character of $M$
\[\text{ch}\ M=\sum_{\mu\in P}(\text{dim}\ M_{\mu})e^{\mu}\]
where $e^{\mu}$ denotes the additive basis of $\C[\h^{*}]$.
 \vskip 3mm
\begin{definition}
A $\U$ module V is called a highest weight module with highest weight $\lambda$ if there is a non-zero vector $v_{\lambda}$ in V such that
\begin{itemize}
\item[(a)] $V=\U v_{\lambda},$
\item[(b)] $a_{il}v_{\lambda}=0,\ \forall (i,l)\in I^{\infty},$
\item[(c)]$q^{h}v_{\lambda}=q^{\langle h,\mu\rangle}v_{\lambda},\ \forall h\in P^{\vee}.$
\end{itemize}
The vector $v_{\lambda}$ is called the highest weight vector with highest weight $\lambda$ and $V$ has a weight decomposition $V=\oplus_{\mu \le \lambda}V_{\mu}$.
\end{definition}
\vskip 3mm

For $\lambda\in P$, let $R(\lambda)$ be the left ideal of $\U$ generated by $q^{h}-q^{\langle h,\lambda\rangle}1, \ h\in P^{\vee}$ and $a_{il},\ (i,l)\in I^{\infty}$. Let $M(\lambda)=\U\slash R(\lambda)$. Clearly $M(\lambda)$ is a free $U^{-}$-module of rank $1$ and a highest weight module with highest weight $\lambda$ .

Let $V=\U v_{\lambda}$ be an arbitrary  highest weight module with highest weight $\lambda$, we have a surjective $\U$-module homomorphism from $V$ to $M(\lambda)$ by sending $v_{\lambda}$ to $\mathbf{1}$, hence every highest weight module with highest weight $\lambda$ is a quotient module of $M(\lambda)$.

Let $J(\lambda)$ be the sum of all submodules of $M(\lambda)$, it is routine to show that the maximal submodule of $M(\lambda)$ is unique and  exactly equals $J(\lambda)$. Let $V(\lambda)=M(\lambda)\slash J(\lambda)$, we get a irreducible highest weight module with highest weight $\lambda$.

 \vskip 3mm
 \begin{proposition}
 Let $\lambda\in P^{+}$ and let $V(\lambda)=\U v_{\lambda}$ be the irreducible highest weight module with highest weight $\lambda$ and highest weight vector $v_{\lambda}$, $\mu\in \text{wt}(V(\lambda))$, we have
 \begin{itemize}
 \item[(a)] If $i\in I^{re}$,
$b_{i}^{\langle h_{i},\lambda\rangle+1}v_{\lambda}=0$.
 \item[(b)] If $i\in I^{im}$ and $\langle h_{i},\lambda\rangle=0$, then $b_{ik}v_{\lambda}=0$ for all $k>0$.
 \item[(c)] If $i\in I^{im}$, then  $\langle h_{i},\mu\rangle\ge 0$.
 \item[(d)]If $i\in I^{im}$ and $\langle h_{i},\mu\rangle=0$, then $V(\lambda)_{\mu-l\alpha_{i}}=0$ for all $l>0$. In particularly, $b_{il}V(\lambda)_{\mu}=0$.
 \item[(e)]If $i\in I^{im}$ and $\langle h_{i},\mu\rangle\le -la_{ii}$, then $a_{il}V(\lambda)_{\mu}=0.$

 \end{itemize}
 \end{proposition}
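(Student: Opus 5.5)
The common strategy is the standard one: a vector of weight $\mu<\lambda$ in $V(\lambda)$ that is killed by all $a_{jk}$ must vanish. If it were nonzero, it would generate a proper highest weight submodule of $M(\lambda)$, or rather its preimage would give a proper submodule of $M(\lambda)$ contained in $J(\lambda)$. To test whether $a_{jk}$ kills a vector of the form $y^{-}v_\lambda$, we use Proposition~\ref{2.3}(b). Since $a_{jk}v_\lambda=0$, it gives
\[
a_{jk}y^{-}v_\lambda=\frac{K_j^{k}\varrho^{j,k}(y)^{-}-(-1)^{p(kj)p(y)-p(kj)}\varrho_{j,k}(y)^{-}K_j^{-k}}{1-(-1)^{p(kj)}q_j^{2k}}\,v_\lambda ,
\]
and the right-hand side is a combination of $\varrho_{j,k}(y)^{-}v_\lambda$ with coefficients of the shape $q_j^{\pm k\langle h_j,\lambda\rangle}$ times signs.

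\textbf{Part (a).} Set $N=\langle h_i,\lambda\rangle+1$ and take $y=a_i^{N}$. For $(j,k)\neq(i,1)$ both $\varrho_{j,k}(y)$ and $\varrho^{j,k}(y)$ vanish. For $(j,k)=(i,1)$, the twisted Leibniz rule gives $\varrho_i(a_i^N)$ and $\varrho^i(a_i^N)$ as explicit $q$-integer multiples of $a_i^{N-1}$. Hence $a_ib_i^Nv_\lambda$ is a scalar times $b_i^{N-1}v_\lambda$, and the scalar is a quantum-integer-type expression $[N]_i$-like evaluated against $K_i^{\pm1}v_\lambda=q_i^{\pm(N-1)}v_\lambda$. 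One checks that it vanishes exactly when $N-1=\langle h_i,\lambda\rangle$. For $i\in I_{\bar1}$ this uses the hypothesis $\langle h_i,\lambda\rangle\in2\Z$ built into $P^+$, which is the point where the sign $(-1)^{p(i)}$ in the super $q$-integers must cancel. So $b_i^Nv_\lambda$ is a highest weight vector of lower weight and is therefore $0$.

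\textbf{Part (b).} The same computation with $y=a_{ik}$ gives
\[
a_{jl}b_{ik}v_\lambda=\delta_{ij}\delta_{kl}\,\frac{q_i^{k\langle h_i,\lambda\rangle}-q_i^{-k\langle h_i,\lambda\rangle}}{1-(-1)^{p(ki)}q_i^{2k}}\,v_\lambda ,
\]
up to sign. This is $0$ when $\langle h_i,\lambda\rangle=0$, so $b_{ik}v_\lambda=0$.

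\textbf{Parts (c) and (d).} I would prove these together by induction on $\mathrm{ht}(\lambda-\mu)$. Every $\mu$ has the form $\lambda-\sum_t k_t\alpha_{j_t}$, and for $i\in I^{\rm im}$ we have $\langle h_i,\alpha_j\rangle\le0$ whenever $j\ne i$. So lowering by $b_{jk}$ with $j\neq i$ never decreases $\langle h_i,\cdot\rangle$. Lowering by $b_{il}$ changes $\langle h_i,\cdot\rangle$ by $-la_{ii}\ge0$; the only dangerous case is $a_{ii}=0$, where it stays unchanged. Thus $\langle h_i,\mu\rangle\ge\langle h_i,\lambda\rangle\ge0$ for $\mu=\lambda-\beta$, since $\langle h_i,\beta\rangle\le0$ whenever $\beta\in Q_+$. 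This is immediate and proves (c).

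For (d), if $\langle h_i,\mu\rangle=0$, then $\langle h_i,\lambda\rangle=0$ and $\beta=\lambda-\mu$ involves only $\alpha_i$ (when $a_{ii}\ne0$) and simple roots $\alpha_j$ with $a_{ij}=0$. A weight $\mu-l\alpha_i$ is spanned by monomials $b_{j_1k_1}\cdots b_{j_rk_r}v_\lambda$ involving some $b_{il'}$. By Corollary~\ref{1.10} (in its $b$-version), the $b_{il'}$ commute up to sign with the $b_{jk}$ having $a_{ij}=0$. The $b_{il'}$ can therefore be moved to the right, where they annihilate $v_\lambda$ by (b). When $a_{ii}<0$, the condition $\langle h_i,\mu-l\alpha_i\rangle\ge0$ from (c) is violated anyway. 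Hence $V(\lambda)_{\mu-l\alpha_i}=0$. I expect this bookkeeping to be the main obstacle, especially making sure that no other $j$ with $a_{ij}<0$ can appear.

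\textbf{Part (e).} I would apply $a_{il}$ to $V(\lambda)_\mu$. It lands in weight $\nu=\mu+l\alpha_i$, where $\langle h_i,\nu\rangle=\langle h_i,\mu\rangle+la_{ii}\le0$. By (c) this forces $\langle h_i,\nu\rangle=0$, so (d) applied at $\nu$ gives $V(\lambda)_{\nu-l\alpha_i}=V(\lambda)_\mu=0$ unless $l=0$, unless the inequality is strict, in which case $V(\lambda)_\nu=0$ directly. I read the hypothesis as strict or as forcing the boundary case. In the boundary case, $V(\lambda)_\mu=V(\lambda)_{\nu-l\alpha_i}=0$ by (d), so $a_{il}V(\lambda)_\mu=0$ trivially.
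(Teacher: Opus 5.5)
Your proposal is correct and follows the paper's proof essentially step by step. For (a) and (b) you use Proposition~\ref{2.3}(b) (resp.\ the defining commutator relation) to show the candidate vector is killed by every $a_{jk}$ and then invoke irreducibility; (c) comes from $a_{ij}\le 0$; (d) moves a $b_{il'}$ past the $b_{jk}$ with $a_{ij}=0$ onto $v_\lambda$, and your version proves the full vanishing of $V(\lambda)_{\mu-l\alpha_i}$, whereas the paper only writes out $b_{il}V(\lambda)_\mu=0$; (e) applies (c) and (d) at $\mu+l\alpha_i$. One slip in (d): $\alpha_i$ can occur in $\lambda-\mu$ only when $a_{ii}=0$ (not when $a_{ii}\neq 0$), and for $a_{ii}<0$ one has $\langle h_i,\mu-l\alpha_i\rangle=-la_{ii}>0$, so (c) is not violated --- but that remark is superfluous, since moving the rightmost $b_{il'}$ to the end only crosses $b_{jk}$ with $a_{ij}=0$, so your commutation argument already covers that case.
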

 \begin{proof}
Assume that $b_{i}^{\langle h_{i},\lambda\rangle+1}v_{\lambda}\neq 0$
  By proposition \ref{2.3}(b), we have
  $$a_{i}b_{i}^{\langle h_{i},\lambda\rangle+1}v_{\lambda}=\frac{K_{i}^{}\varrho^{i}(a_{i}^{\langle h_{i},\lambda\rangle+1})^{-}-(-1)^{p(i)\langle h_{i},\lambda\rangle}\varrho_{i}(a_{i}^{\langle h_{i},\lambda\rangle+1})^{-}K_{i}^{-1}}{1-(-1)^{p(i)}q_{i}^{2}}v_{\lambda}.$$

 If $p(i)=0$, then $(-1)^{p(i)\langle h_{i},\lambda\rangle}=1$. Otherwise, by the definition of $P^{+}$, $\langle h_{i},\lambda \rangle\in 2\Z$, hence $(-1)^{p(i)\langle h_{i},\lambda\rangle}=1$.

  By lemma \ref{1.7} (a), we have $$\varrho_{i}(a^{\langle h_{i},\lambda \rangle+1})=\varrho^{i}(a^{\langle h_{i},\lambda \rangle+1})=q_{i}^{\langle h_{i},\lambda \rangle}[n]_{i}a_{i}^{\langle h_{i},\lambda \rangle}.$$

Sicne
$$K_{i}b_{i}^{\langle h_{i},\lambda \rangle}v_{\lambda}=q_{i}^{-\langle h_{i},\lambda \rangle}b_{i}^{\langle h_{i},\lambda \rangle}v_{\lambda}=b_{i}^{\langle h_{i},\lambda \rangle}K_{i}^{-1}v_{\lambda},$$
we know that $a_{i}b_{i}^{\langle h_{i},\lambda\rangle+1}v_{\lambda}=0$.

When $(j,l)\neq i $, we have $a_{jl}b_{i}^{\langle h_{i},\lambda\rangle+1}v_{\lambda}=0$.
Since $V(\lambda)$ is irreducible, we get a contradiction.

(b) For each $(i,l)\in I^{\infty}$, if $(j,k)\neq (i,l)$, then $a_{jk}b_{il}v_{\lambda}=b_{il}a_{jk}v_{\lambda}=0$. If $(j,k)= (i,l)$, then
$$a_{il}b_{il}v_{\lambda}=\frac{K_{i}^{l}-K_{i}^{-l}}{1-(-1)^{p(li)}q_{i}^{2l}}v_{\lambda}.$$
Since $\langle h_{i},\lambda\rangle=0$, we get $K_{i}^{l}v_{\lambda}-K_{i}^{-l}v_{\lambda}=0$. Hence, $a_{il}b_{il}v_{\lambda}=0$  and we get a contradiction.

(c) Since $\mu\in \text{wt}(V(\lambda))$, $\mu=\lambda-\beta$ for some $\beta\in Q{+}$ with the form $\beta=\sum_{t=1}^{r}l_{t}\alpha_{i_{t}}$.
since $a_{ij}\le 0$ for all $j\in I$, we have
$$\langle h_{i},\mu\rangle=\langle h_{i},\lambda\rangle-(\sum_{t=1}^{r}l_{t}a_{ii_{t}})\ge0 .$$

 (d) For any $u\in V(\lambda)_{\mu}$, write $u=b_{i_{1}l_{1}}\cdots b_{i_{r}l_{r}}v_{\lambda}$. If $\langle h_{i},\mu\rangle=0$, then $\langle h_{i},\lambda\rangle=0$ and $a_{i,i_{t}}=0$ for all $1\le t\le r$. From the definition of $\U$ and (b), we have $b_{il}u=(-1)^{p(li)p(u)}b_{i_{1}l_{1}}\cdots b_{i_{r}l_{r}}b_{il}v_{\lambda}=0$. We have proved (d).

 (e) Suppose $a_{il}V(\lambda)_{\mu}\neq 0$, then $\mu+l\alpha_{i}\in \text{wt}(V(\lambda))$. By (c)
 $$0\le \langle h_{i},\mu+l\alpha_{i}\rangle=\langle h_{i},\mu\rangle+la_{ii}\le 0,$$
 which yields  $ \langle h_{i},\mu+l\alpha_{i}\rangle=0$, By (d),  $\mu\notin \text{wt}(V(\lambda))$, we get a contradiction.
 \end{proof}
  \vskip 3mm
 \begin{definition}\label{2.6}
The category $O_{int}$ consists of $\U$-modules $M$ such that
\begin{itemize}
\item[(a)] $M$ has a weight decomposition $M=\oplus_{\mu\in P}M_{\mu}$ and $\text{dim} M_{\mu}<\infty$ for all $\mu\in P$.
\item[(b)]There exists finitely many weights $\lambda_{1},\cdots ,\lambda_{r}\in P$ such that $\text{wt}(M)\subset\cup_{j=1}^{r}(\lambda_{j}-Q_{+})$.
\item[(c)]If $i\in I^{re}$, $b_{i}$ is locally nilpotent on $M$.
\item[(d)] If $i\in I^{im}$, then $\langle h_{i},\mu\rangle\ge0$ for all $\mu\in \text{wt}(M)$.
\item[(e)] If $i\in I^{im}$ and $\langle h_{i},\mu\rangle=0$, then $b_{il}M(\mu)=0$ for all $l>0$.
\item[(f)] If $i\in I^{im}$ and $\langle h_{i},\mu\rangle\le -la_{ii}$, then $a_{il}(M_{\mu})=0$  for all $l>0$.
\end{itemize}
\end{definition}
\begin{remark}
\begin{itemize}
\item[(a)] The irreducible highest weight $\U$-module $V(\lambda)$ with $\lambda\in P^{+}$ is an object of $O_{int}$ .
\item[(b)]  Finite direct sums of modules belonging to $O_{int}$  is also belonging to $O_{int}$,
\end{itemize}
\end{remark}
 \vskip 7mm
\section{Triangular decomposition}
\begin{lemma}
Let $U^{\ge0 }$ (resp. $U^{\le0}$) be the subalgebra of $U$ generated by $U^{0}$ and $U^{+}$ (resp. $U^{-}$ and $U^{0}$), then $U^{\le0}\cong U^{-}\otimes U^{0},\ U^{\ge0}\cong U^{0}\otimes U^{+}.$
\end{lemma}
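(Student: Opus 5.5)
We prove $U^{\ge 0}\cong U^{0}\otimes U^{+}$ by realizing both factors inside an explicit faithful representation. The statement for $U^{\le 0}$ then follows by applying the automorphism $\omega$. Throughout, $U^{+}$ means the algebra $F/\mathscr{R}$ of Section 1, and $U^{0}$ is the group algebra of $P^{\vee}$ spanned by the $q^{h}$.

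\emph{Step 1: the map is surjective.} Multiplication gives a linear map $m: U^{0}\otimes U^{+}\to U^{\ge0}$, $q^{h}\otimes x\mapsto q^{h}x^{+}$. The relations $q^{h}a_{il}q^{-h}=q^{l\langle h,\alpha_i\rangle}a_{il}$ let us move every $q^{h}$ to the left of every $a_{il}$. Hence each monomial in the generators of $U^{\ge0}$ can be rewritten as $q^{h}$ times a word in the $a_{il}$, and $m$ is surjective. We must also check that $m$ is well defined, i.e. that the defining relations of $U^{+}$ in $U$ hold in $F/\mathscr{R}$. These are the higher Serre relations and the relations for $a_{ij}=0$, and they hold by Proposition \ref{1.9} and Corollary \ref{1.10}. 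Note that Proposition \ref{1.9} is stated with divided powers; it matches the relation in the definition of $U$ after multiplying through by $[1-ka_{ij}]_i^{!}$.

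\emph{Step 2: injectivity, the main step.} We construct an algebra $\mathcal{B}=U^{0}\ltimes F/\mathscr{R}$. As a vector space it is $U^{0}\otimes U^{+}$, with product
\[
(q^{h}\otimes x)(q^{h'}\otimes y)=q^{\langle h',|x|\rangle}\,q^{h+h'}\otimes xy .
\]
This product is associative because $|\cdot|$ is additive. Its defining data satisfy all the relations of $U^{\ge0}$, so there is an algebra map $U^{\ge0}\to\mathcal{B}$. Composing it with $m$ gives the identity on generators, so $m$ is injective.

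The subtle point is that $U^{\ge0}$ is defined as a subalgebra of $U$, not abstractly. So the map $U^{\ge0}\to\mathcal{B}$ must come from $U$ itself, or equivalently we need a faithful $U$-module on which $U^{0}\otimes U^{+}$ acts injectively. The standard route is Lusztig's: build a $U$-module structure on $U^{0}\otimes U^{+}$. Here $a_{il}$ acts by left multiplication twisted by $q^{h}$, and $b_{il}$ acts through the skew-derivations $\varrho_{i,l},\varrho^{i,l}$. The operator is the right-hand side of Proposition \ref{2.3}(a) with the commutator removed, namely
\[
b_{il}\cdot(q^{h}\otimes x)=\frac{\varrho_{i,l}(x)K_i^{l}\cdots-\cdots K_i^{-l}\varrho^{i,l}(x)}{1-(-1)^{p(li)}q_i^{2l}} ,
\]
with the appropriate parity signs.

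We then verify the relations of $U$ on this module:
\begin{itemize}
\item the $q^{h}$-relations, immediately;
\item the mixed relation, from the twisted Leibniz rules, as in the proof of Proposition \ref{2.3};
\item the Serre-type relations among the $b_{il}$.
\end{itemize}
The last item is the expected obstacle. The operators $\varrho_{i,l}$ and $\varrho^{i,l}$ are well defined on $F/\mathscr{R}$, since $\varrho_{i,l}(\mathscr{R}),\varrho^{i,l}(\mathscr{R})\subset\mathscr{R}$. For the Serre relations among the $b$'s, we plan to use duality: under the bilinear form, these operators are adjoint to right and left multiplication by $a_{il}$, by the identities preceding Lemma \ref{1.4}. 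Relations among the $b$'s then follow from the same relations among the $a$'s, which hold in $U^{+}$ by Proposition \ref{1.9} and Corollary \ref{1.10}. The form is nondegenerate on $U^{+}$, so these relations transfer.

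\emph{Step 3: conclusion.} Acting on $1\otimes 1$ recovers $q^{h}x\mapsto q^{h}\otimes x$. This inverts $m$, so $U^{\ge0}\cong U^{0}\otimes U^{+}$. Applying $\omega$, which exchanges $a_{il}$ with $b_{il}$ up to sign and sends $q^{h}\mapsto q^{-h}$, gives $U^{\le0}\cong U^{-}\otimes U^{0}$.
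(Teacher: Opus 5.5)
There is a genuine gap in Step 1, and Step 2 does not close it. To define $m(q^{h}\otimes \bar{x})=q^{h}x^{+}$ on $U^{0}\otimes F/\mathscr{R}$, you need $x^{+}=0$ in $U$ for every $x\in\mathscr{R}$. Proposition \ref{1.9} and Corollary \ref{1.10} prove the opposite inclusion: the ideal $\mathscr{J}\subset F$ generated by the higher Serre and commutation relations of Definition \ref{def:BBsuper} is contained in $\mathscr{R}$. Since $U$ imposes only the relations in $\mathscr{J}$, what you actually need is essentially $\mathscr{R}\subseteq\mathscr{J}$. That is a Gabber--Kac type theorem, and nothing in the paper supplies it in this super Borcherds--Bozec setting.

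Your module does not supply it either. Evaluating at $1\otimes 1$ gives a surjection $\psi: U^{\ge 0}\to U^{0}\otimes F/\mathscr{R}$, $q^{h}x^{+}\mapsto q^{h}\otimes\bar{x}$. Its kernel contains $x^{+}$ for every $x\in\mathscr{R}$, and it is injective exactly when all those elements vanish in $U$. So the step ``composing with $m$ gives the identity'' presupposes the missing fact. Even granting the $b$-Serre relations on the module, the argument only shows that $U^{0}\otimes F/\mathscr{R}$ is a quotient of $U^{\ge0}$.

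The paper avoids this by taking $U^{\pm}$ to be the subalgebras of $U$ generated by the $a_{il}$ (resp. $b_{il}$). Then the multiplication map is automatically defined, and only injectivity is at stake. It proves injectivity with the coproduct:
\begin{itemize}
\item fix a basis $\{b_\tau\}$ of $U^{-}$ and reduce to a relation $\sum c_{\tau,h}b_\tau q^{h}=0$ in a single degree $-\beta$;
\item apply $\Delta$ and keep the bidegree $(0,-\beta)$ part, $\sum_{\tau,h}c_{\tau,h}\,q^{h}\otimes b_\tau q^{h}=0$;
\item linear independence of the $q^{h}$ gives $\sum_\tau c_{\tau,h}b_\tau=0$ for each $h$, hence all $c_{\tau,h}=0$.
\end{itemize}

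Under this reading your Step 1 becomes trivial. However, a module built on $F/\mathscr{R}$ still cannot prove injectivity, because it sees $x^{+}$ only through $\bar{x}\in F/\mathscr{R}$. Any $x\in\mathscr{R}$ with $x^{+}\neq 0$ would be invisible to it. You would need either the coproduct argument above, or the standard two-step route. In that route you first prove the decomposition for the algebra with free positive and negative parts. You then pass to the quotient by the Serre ideals, using that Serre elements supercommute with the opposite generators.
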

\begin{proof}
$U^{-}$ is spanned by monomials in $b_{il}$ and we can construct a monomial basis $B^{-}=\{b_{\tau}|\tau\in \Omega\}$ of $U^{-}$ where $\Omega$ is an ordered set. By the defining relations of $U$, we have a surjective homomorphism
$$U^{-}\otimes U^{0}\to U^{\le0}$$
given by
$$b_{\tau}\otimes q^{h}\to b_{\tau}q^{h},$$
where $\tau\in \Omega,\ h\in P^{\vee}$. We will show $b_{\tau}q^{h}\ (\tau\in \Omega,\ h\in P^{\vee})$ are linearly independent.
Note that $B^{-}=\sqcup_{\beta\in Q_{+}}B_{-\beta}$, where $B_{-\beta}=\{b_{\tau}|\text{deg}(b_{\tau})=-\beta\}$. Consider the linear dependence relation $$\sum_{\tau,h}c_{\tau,h}b_{\tau}q^{h}=0\ \ \text{with}\ \tau\in \Omega,\ h\in P^{\vee},\ c_{\tau,h}\in\Q(q),$$
this relation can be written as
$$\sum_{\beta\in Q_{+}}(\sum_{\substack{\text{deg}\ b_{\tau}=-\beta\\h\in P^{\vee}}}c_{\tau,h}b_{\tau}q^{h})=0,$$
which implies
$$\sum_{\substack{\text{deg}\ b_{\tau}=-\beta\\h\in P^{\vee}}}c_{\tau,h}b_{\tau}q^{h}=0,\ \ \forall \beta\in Q_{+}.$$
We write $b_{\tau}=b_{i_{1}l_{1}}\cdots b_{i_{r}l_{r}}$ with $\sum_{i=1}^{r} l_{i}\alpha_{i}=-\beta$ and we have
\begin{align*}
0&=\Delta(\sum_{\substack{\text{deg}\ b_{\tau}=-\beta\\h\in P^{\vee}}}c_{\tau,h}b_{\tau}q^{h})\\
&=\sum_{\substack{\text{deg}\ b_{\tau}=-\beta\\h\in P^{\vee}}}c_{\tau,h}\Delta(b_{\tau})q^{h}\otimes q^{h}\\
&=\sum_{\tau,h}c_{\tau,h}(b_{\tau}q^{h}\otimes q^{-h_{\tau}}q^{h}+(\text{intermediate terms})+q^{h}\otimes b_{\tau}q^{h}).
\end{align*}
Consider the bi-degree $(0,-\beta)$,
\begin{align*}
0&=\sum_{\tau,h}c_{\tau,h}q^{h}\otimes b_{\tau}q^{h}\\
&=\sum_{h}(q^{h}\otimes (\sum_{\tau}c_{\tau,h}b_{\tau}q^{h})).
\end{align*}
Since $q^{h}$ are linearly independent. By the property of tensor product, we have
$$\sum_{\tau}c_{\tau,h}b_{\tau}q^{h}=0,\ \ \forall h\in P^{\vee}.$$
which implies $\sum_{\tau}c_{\tau,h}b_{\tau}=0$. By the independence of $b_{\tau}$, we have $c_{\tau,h}=0$ for all $\tau\in \Omega,\ h\in P^{\vee}$, hence $b_{\tau}q^{h}\ (\tau\in \Omega,\ h\in P^{\vee})$ are linearly independent.
\end{proof}
\begin{theorem}
The quantum Borcherds-Bozec superalgebra $U_{q}(\g)$ has triangular decomposition $U_{q}(\g)\cong U^{-}\otimes U^{0}\otimes U^{+}$.
\end{theorem}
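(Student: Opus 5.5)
The plan follows the standard argument of Lusztig and Kang, adapted to the super setting. We exhibit $U$ as a quotient of a ``free'' triangular algebra, and detect linear independence through a faithful representation.

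\textbf{Step 1: surjectivity.} The multiplication map $m: U^{-}\otimes U^{0}\otimes U^{+}\to U$ is surjective. The defining relations let us move every $a_{il}$ to the right of every $b_{jk}$ by the commutation relation
\[
a_{il}b_{jk}=(-1)^{p(li)p(kj)}b_{jk}a_{il}+\delta_{ij}\delta_{kl}\frac{K_{i}^{l}-K_{i}^{-l}}{1-(-1)^{p(li)}q_{i}^{2l}}.
\]
Every $q^{h}$ can be moved to the middle by the weight relations. An induction on the number of ``inversions'' in a monomial then shows that the products $b\,q^{h}\,a$ span $U$.

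\textbf{Step 2: a free cover.} Let $\tilde U$ be the algebra with generators $a_{il}$, $b_{il}$, $q^{h}$, and only the relations of Definition~\ref{def:BBsuper} that involve the Cartan part and the mixed commutator $a_{il}b_{jk}$. Here $\tilde U^{\pm}\cong F$. On the vector space $F^{-}\otimes U^{0}\otimes F^{+}$ we construct a $\tilde U$-module structure:
\begin{itemize}
\item $b_{il}$ and $q^{h}$ act by left multiplication, with the weight twist;
\item $a_{il}$ acts on $y^{-}\otimes q^{h}\otimes x^{+}$ through the formula of Proposition~\ref{2.3}(b), applied to $y$, followed by left multiplication by $a_{il}$ on the $F^{+}$ factor.
\end{itemize}
We then check the mixed relation. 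This shows $\tilde U\cong F^{-}\otimes U^{0}\otimes F^{+}$ as vector spaces.

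\textbf{Step 3: the radical is a two-sided ideal.} Consider the ideal of $\tilde U$ generated by $\mathscr{R}^{+}$ and $\mathscr{R}^{-}$. We show it equals $\mathscr{R}^{-}\otimes U^{0}\otimes F^{+}+F^{-}\otimes U^{0}\otimes\mathscr{R}^{+}$. The key point is that $\mathscr{R}^{-}\otimes U^{0}\otimes F^{+}+F^{-}\otimes U^{0}\otimes \mathscr{R}^{+}$ is stable under left and right multiplication by generators.

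For $x\in\mathscr{R}$, Proposition~\ref{2.3}(a) gives $[x^{+},b_{il}]$ in terms of $\varrho_{i,l}(x)^{+}$ and $\varrho^{i,l}(x)^{+}$. These lie in $\mathscr{R}^{+}$ because $\varrho_{i,l}(\mathscr{R})\cup\varrho^{i,l}(\mathscr{R})\subset\mathscr{R}$. The case of $b$ is symmetric.

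Hence
\[
U=\tilde U/(\mathscr{R}^{\pm})\cong (F/\mathscr{R})^{-}\otimes U^{0}\otimes (F/\mathscr{R})^{+}.
\]
This uses that $U^{\pm}$ is $F/\mathscr{R}$. The higher Serre relations and the $a_{ij}=0$ commutations lie in $\mathscr{R}$ by Proposition~\ref{1.9}, Corollary~\ref{1.10} and the higher Serre theorem.

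The remaining issue is the identification of the subalgebras $U^{\pm}$ of $U$ with $F/\mathscr{R}$. The defining relations only kill the listed Serre elements, so one must either take the definition of $U^{+}$ as $F/\mathscr{R}$, as the notation suggests, or show that the listed relations generate $\mathscr{R}$. We adopt the first reading, consistent with the Lemma just proved on $U^{\le0}$. In that reading the statement reduces to Steps 1--3 together with the previous lemma giving $U^{\le 0}\cong U^{-}\otimes U^{0}$ and $U^{\ge0}\cong U^{0}\otimes U^{+}$.

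\textbf{Main obstacle.} We expect the main obstacle to be Step 2: verifying that the action of $a_{il}$ is well defined and satisfies the mixed relation with the correct super signs $(-1)^{p(li)p(kj)}$ and $q$-twists. We would handle this via the twisted Leibniz rules for $\varrho_{i,l}$ and $\varrho^{i,l}$, mirroring the computation in the proof of Proposition~\ref{2.3}.
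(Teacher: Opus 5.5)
Your route is correct in structure and genuinely different from the paper's. The paper builds no free cover and no module. It fixes monomial bases $\{b_\tau\}$, $\{a_\mu\}$ of the subalgebras $U^{\mp}$ of $U$ and applies $\Delta$ to a homogeneous relation $\sum c_{\tau,h,\mu}\,b_\tau q^h a_\mu=0$. It then extracts the component of extreme bidegree (maximal $U^{+}$-degree in the first factor, minimal $U^{-}$-degree in the second), kills those coefficients using the preceding lemma that the $b_\tau q^h$ are independent, and inducts along a total order on $Q$. That argument is short and stays inside $U$ as presented, but it does not identify $U^{+}$ with any quotient of $F$. It also quietly needs the $q^h$ to be linearly independent in $U$ (asserted without proof in the lemma on $U^{\le 0}$), which requires some faithful representation. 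Your construction supplies exactly this. The module $F^{-}\otimes U^{0}\otimes F^{+}$ with cyclic vector $1\otimes1\otimes1$ shows at once that $U^{0}$ is the group algebra of $P^{\vee}$ and that $\tilde U\cong F^{-}\otimes U^{0}\otimes F^{+}$, and Step 3 identifies the positive part explicitly. The price is the bookkeeping in Step 2, and it does go through. Expanding $\varrho^{i,l}(a_{jk}y)$ and $\varrho_{i,l}(a_{jk}y)$ by the twisted Leibniz rules, everything cancels except $\delta_{ij}\delta_{kl}(K_i^{l}-K_i^{-l})/(1-(-1)^{p(li)}q_i^{2l})$, just as in the proof of Proposition~\ref{2.3}.

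The one real gap is your last step. $U_q(\mathfrak g)$ is \emph{defined} by the relations of Definition~\ref{def:BBsuper}, i.e.\ $U=\tilde U/(S^{+}+S^{-})$. Here $S\subseteq F$ is the two-sided ideal generated by the listed Serre elements and the commutators for $a_{ij}=0$. You only know $S\subseteq\mathscr{R}$, and nothing in the paper gives equality. So quotienting by $\mathscr{R}^{\pm}$ proves the decomposition for a possibly proper quotient of $U$, and your ``first reading'' amounts to changing the definition.

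You do not need $S=\mathscr{R}$: run Step 3 with $S$ in place of $\mathscr{R}$.
\begin{enumerate}
\item The computations in Proposition~\ref{1.9} and Corollary~\ref{1.10} are carried out in $F$, before Lemma 1.6 is invoked. They show that every $\varrho_{k,t}$ annihilates each generator of $S$.
\item Each generator is $\sigma$-invariant up to sign. For the Serre elements this uses that $N=1-ka_{ij}$ is odd when $p(i)=1$. Hence by Lemma~\ref{1.4} every $\varrho^{k,t}$ annihilates it as well.
\item The twisted Leibniz rules then give $\varrho_{k,t}(S)+\varrho^{k,t}(S)\subseteq S$.
\end{enumerate}
Your stability argument now shows that $S^{-}\otimes U^{0}\otimes F^{+}+F^{-}\otimes U^{0}\otimes S^{+}$ is the ideal generated by $S^{\pm}$. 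Hence $U\cong (F/S)^{-}\otimes U^{0}\otimes (F/S)^{+}$, with $F/S$ mapping isomorphically onto the subalgebras $U^{\pm}$ of $U$, which is the statement as written. With this change the lemma on $U^{\le 0}$ is not needed at all.
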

\begin{proof}
By the relations of $U_{q}(\g)$, there is a surjective homomorphism
$$U^{-}\otimes U^{0}\otimes U^{+}\to U_{q}(\g).$$
We now prove the homomorphism is injective. Let $B^{+}=\{a_{\tau}|\tau\in \Omega\}$ denote a monomial basis of $U^{+}$, we need to show that $B=\{b_{\tau}q^{h}a_{\mu}|\tau,\mu\in \Omega,h\in P^{\vee}\}$ is linearly independent.
As in previous lemma, we only have to consider the linear dependence relation
\begin{equation}\label{3.1}
\sum_{\substack{h\in P^{\vee}\\ \text{deg}\ b_{\tau}+\text{deg}\ a_{\mu}=\gamma}}c_{\tau,h,\mu}b_{\tau}q^{h}a_{\mu}=0\end{equation}
for all $\gamma\in Q$. Then we have
\begin{align*}
0=&\Delta(\sum_{\substack{h\in P^{\vee}\\ \text{deg}\ b_{\tau}+\text{deg}\ a_{\mu}=\gamma}}c_{\tau,h,\mu}b_{\tau}q^{h}a_{\mu})\\
=&\sum_{\substack{h\in P^{\vee}\\ \text{deg}\ b_{\tau}+\text{deg}\ a_{\mu}=\gamma}}c_{\tau,h,\mu}(b_{\tau}\otimes q^{-h_{\tau}}+(\text{intermediate terms})+1\otimes b_{\tau})(q^{h}\otimes q^{h})\\
&\times(a_{\mu}\otimes 1+(\text{intermediate terms})+q^{h_{\mu}}\otimes a_{\mu})
\end{align*}
Take a total ordering $\le$ on $Q$ given by the height and lexicographic ordering. Let $\Omega_{0}$ (resp. $\Omega_{1}$) be the set of all $\tau\in \Omega$ (resp. $\mu\in \Omega$) such that $\text{deg}\ b_{\tau}$ (resp. $\text{deg}\ a_{\mu}$) is minimal (resp. maximal)
among the terms appearing in (\ref{3.1}). It is obvious that $\tau\in \Omega_{0}$ if and only if $\mu\in \Omega_{1}$.

Come back to the terms of bi-degree (max,min) in (\ref{3.1}), we have
$$\sum_{\substack{h\in P^{\vee}\\ \tau\in \Omega_{0}\\ \mu\in \Omega_{1}}}(-1)^{p(\mu)p(\tau)}c_{\tau,h,\mu}q^{h}a_{\mu}\otimes b_{\tau}q^{h}=\sum_{\substack{\tau\in\Omega_{0}\\h\in P^{\vee}}}(\sum_{\mu\in\Omega_{1}}(-1)^{p(\mu)p(\tau)}c_{\tau,h,\mu}q^{h}a_{\mu})\otimes b_{\tau}q^{h}=0. $$
Since $b_{\tau}q^{h}\ (\tau\in\Omega_{0},\ h\in P^{\vee})$ are linear independent, we have
$$\sum_{\mu\in\Omega_{1}}(-1)^{p(\mu)p(\tau)}c_{\tau,h,\mu}q^{h}a_{\mu}=0.$$
Therefore $c_{\tau,h,\mu}=0$ for all $\tau\in \Omega_0,\ \mu\in \Omega_1$. Continue this process while maintaining total ordering $\le$ on Q, we can conclude that $c_{\tau,h,\mu}=0$ for all $h\in P^{\vee}\ \tau,\mu\in \Omega$.
\end{proof}
\begin{corollary}
Let $x\in U^{+}$,
\begin{itemize}
\item[(a)] if $xb_{il}=(-1)^{p(li)p(x)}b_{il}x$ for all $(i,l)\in I^{\infty}$, then $x=0,$
\item[(b)] if $a_{il}x^{-}=(-1)^{p(li)p(x)}x^{-}a_{il}$ for all $(i,l)\in I^{\infty}$, then $x=0.$
\end{itemize}

\end{corollary}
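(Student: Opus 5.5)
Both parts follow the same route: combine Proposition \ref{2.3} with the triangular decomposition, and then invoke Lemma \ref{1.6}. Part (b) can also be obtained from part (a) by applying $\omega$. Below I give the argument for (a), with homogeneous $x\in U^{+}_{\nu}$. This is harmless because the hypothesis respects the weight and parity grading, so it holds componentwise.

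If $\nu=0$, then $x$ is a scalar. The hypothesis then says $xb_{il}=b_{il}x$, which carries no information, so the statement is only meaningful for $\nu\neq0$. I will treat it with that understood, as the intended reading, just as Lemma \ref{1.6} requires $\nu\neq 0$.

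Now take $\nu\neq0$. Lift $x$ to $F$; Proposition \ref{2.3}(a) only depends on the image in $U^{+}$, because $\varrho_{i,l}(\mathscr R)$ and $\varrho^{i,l}(\mathscr R)$ lie in $\mathscr R$. The hypothesis $x^{+}b_{il}-(-1)^{p(li)p(x)}b_{il}x^{+}=0$ then gives, for every $(i,l)\in I^{\infty}$,
\[
\varrho_{i,l}(x)^{+}K_{i}^{l}-(-1)^{p(li)p(x)-p(li)}K_{i}^{-l}\varrho^{i,l}(x)^{+}=0 .
\]
The key step is to separate the two terms. Commute $K_i^{-l}$ to the right past $\varrho^{i,l}(x)^{+}$; this costs only a nonzero power of $q$. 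The relation becomes
\[
\varrho_{i,l}(x)^{+}K_i^{l}=c\,\varrho^{i,l}(x)^{+}K_i^{-l}, \qquad c\in\Q(q)^{\times}.
\]
By the triangular decomposition $U\cong U^{-}\otimes U^{0}\otimes U^{+}$, and likewise $U^{\ge0}\cong U^{0}\otimes U^{+}$, elements of the form $y^{+}K$ with $y\in U^{+}$ and $K$ ranging over distinct group-like elements of $U^{0}$ are linearly independent. Here $K_i^{l}\neq K_i^{-l}$ since $l>0$ and $d_ih_i\neq0$. Hence both $\varrho_{i,l}(x)=0$ and $\varrho^{i,l}(x)=0$ in $U^{+}$ for all $(i,l)$.

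Lemma \ref{1.6}(a) now yields $x=0$. For (b), either repeat the argument with Proposition \ref{2.3}(b) and the decomposition $U^{\le0}\cong U^{-}\otimes U^{0}$, or apply $\omega$ to reduce to (a), keeping track of the signs $(-1)^{p(x)}$ that $\omega$ introduces.

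The main obstacle is the separation step. It requires the injectivity of $U^{+}\to U$, $y\mapsto y^{+}$, which is implicit in the triangular decomposition, together with the linear independence of $U^{+}K_i^{l}$ and $U^{+}K_i^{-l}$ inside $U^{0}\otimes U^{+}$. The sign and $q$-power bookkeeping is routine, since it only produces nonzero scalars.
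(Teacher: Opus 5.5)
Your proposal is correct and is the argument the paper intends. The paper states this corollary without proof, right after the triangular decomposition, and it follows as in Lusztig's treatment from Proposition~\ref{2.3}, the linear independence of $U^{+}K_i^{l}$ and $U^{+}K_i^{-l}$ given by the triangular decomposition, and Lemma~\ref{1.6}, with (b) reduced to (a) via $\omega$. Your restriction to $x\in U^{+}_{\nu}$ with $\nu\neq 0$ is a genuine correction to the statement as printed, since $x=1$ satisfies the hypothesis but is nonzero.
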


\vskip 3mm
For $\nu\in \N I^{\infty}$, write $\nu=\sum_{(i,l)}\nu_{i,l}(i,l)$. We set \begin{align*}
c(\nu)&=\sum_{p<q}(l_{p}\alpha_{p},l_{q}\alpha_{q})\\
e(\nu)&=\sum_{p<q}p(l_{q}\alpha_{q})p(l_{p}\alpha_{p})
\end{align*}
\begin{lemma}
Let $\nu\in \N I^{\infty}$
\begin{itemize}
\item[(a)] There is a unique $\Q(q)$ linear map $S:\ U\to U$ such that
$$S(a_{il})=-K_{i}^{-l}a_{il},\ S(b_{il})=-b_{il}K_{i}^{l},\ S(q^{h})=q^{-h}$$
and $S(xy)=(-1)^{p(x)p(y)}S(y)S(x)$ for all $x,y\in U$.
\item[(b)] For any $x\in U_{\nu}^{+}$, we have
\begin{align*}
S(x^{+})&=(-1)^{\text{ht}\nu}(-1)^{e(\nu)}(-q)^{c(\nu)}K_{-\nu}\sigma(x)^{+}\\
S(x^{-})&=(-1)^{\text{ht}\nu}(-1)^{e(\nu)}q^{-c(\nu)}\sigma(x)^{-}K_{\nu}
\end{align*}
\item[(c)]There is a unique $\Q(q)$ linear map $S':\ U\to U$ such that
$$S'(a_{il})=-a_{il}K_{i}^{-l},\ S'(b_{il})=-K_{i}^{l}b_{il},\ S'(q^{h})=q^{-h}$$
and $S'(xy)=(-1)^{p(x)p(y)}S'(y)S'(x)$ for all $x,y\in U$.
\item[(d)] $SS'=S'S=1$.
\item[(e)] If $x\in U_{\nu}^{+}$, then $S(x^{+})=q^{-\sum l_{i}^{2}(\alpha_{i},\alpha_{i})}S'(x^{+})$ and $S(x^{-})=q^{\sum l_{i}^{2}(\alpha_{i},\alpha_{i})}S'(x^{-})$
 \end{itemize}
\end{lemma}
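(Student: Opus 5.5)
For (a) and (c), I will construct $S$ and $S'$ on the presentation of Definition~\ref{def:BBsuper}, defining them on generators and extending as super anti-homomorphisms. Concretely, I regard $S$ as a superalgebra homomorphism $U\to U^{\mathrm{sop}}$, where $U^{\mathrm{sop}}$ is $U$ with product $x\cdot y=(-1)^{p(x)p(y)}yx$. It then suffices to check that the images satisfy every defining relation in $U^{\mathrm{sop}}$.

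The relations have the following status.
\begin{itemize}
\item The $q^{h}$ relations and the weight relations are immediate.
\item For $a_{il}b_{jk}-(-1)^{p(li)p(kj)}b_{jk}a_{il}$, I compute $S(b_{jk})\cdot S(a_{il})$ in $U^{\mathrm{sop}}$, move the $K$'s past each other (they contribute $q^{\pm}$ factors that cancel), and compare with $S\bigl(\frac{K_i^l-K_i^{-l}}{1-(-1)^{p(li)}q_i^{2l}}\bigr)=\frac{K_i^{-l}-K_i^{l}}{1-(-1)^{p(li)}q_i^{2l}}$.
\item The commutation relations for $a_{ij}=0$ hold because the $K$'s commute past the generators up to $q^{(l\alpha_i,k\alpha_j)}=1$.
\item The Serre relations are the main obstacle, treated below.
\end{itemize}

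Rather than verifying the Serre relations directly, I reduce them to part (b). I first prove the formula of (b) for the free algebra $F$. Define $\tilde S:F\to U^{\geq 0}$ on monomials by the anti-rule. By induction on the length of a monomial $x=x'a_{il}$, using $S(x'a_{il})=(-1)^{p(x')p(li)}S(a_{il})S(x')=-(-1)^{p(x')p(li)}K_i^{-l}a_{il}K_{-|x'|}\sigma(x')^{+}\cdots$ and commuting $a_{il}$ past $K_{-|x'|}$, the accumulated factors $q^{(l\alpha_i,|x'|)}$ and signs produce exactly $(-1)^{\mathrm{ht}\nu}(-1)^{e(\nu)}(-q)^{c(\nu)}$ (or $q^{c(\nu)}$; I will need to track the bar-consistency sign carefully here) times $K_{-\nu}\sigma(x)^{+}$. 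Since $\sigma$ preserves $\mathscr R$ by Lemma~\ref{1.2}(b), $\sigma(\mathscr R)^{+}=0$. Hence $\tilde S$ kills the Serre elements, noting that the scalar depends only on $\nu$, so it factors out of the homogeneous relation. The same argument applies to the $b$-side. This establishes (a) and (b) simultaneously.

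Part (c) follows by the same argument, with $K$'s placed on the other side. For (d), I check that $SS'$ and $S'S$ are superalgebra endomorphisms (a composite of two anti-homomorphisms with the same sign rule is a homomorphism) and fix generators: $S(S'(a_{il}))=S(-a_{il}K_i^{-l})=-S(K_i^{-l})S(a_{il})=K_i^{l}K_i^{-l}a_{il}=a_{il}$, and similarly for the other generators. For (e), I compare the analogue of (b) for $S'$, namely $S'(x^{+})=(\pm)\,\sigma(x)^{+}K_{-\nu}$, with (b). Moving $K_{-\nu}$ across $\sigma(x)^{+}$ gives a factor $q^{(\nu,\nu)}$, which combined with the $c(\nu)$ factors yields the stated power $q^{-\sum l_i^2(\alpha_i,\alpha_i)}$, since $(\nu,\nu)-2c(\nu)=\sum l_i^2(\alpha_i,\alpha_i)$. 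The negative half is analogous.
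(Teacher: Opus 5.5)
The paper states this lemma without proof, so there is no argument to compare against. Your plan is the standard Lusztig-style one: define $S$ and $S'$ as maps $U\to U^{\mathrm{sop}}$, compute them on monomials to get (b), then obtain (d) and (e) by direct comparison. The following checks are correct:
\begin{itemize}
\item the $q^h$-relations;
\item the $a_{ij}=0$ commutators;
\item the $a_{il}b_{jk}$ relation (for $(i,l)\neq(j,k)$ the $K$-factors do not cancel, but both terms acquire the same factor, so you get a multiple of the left-hand side, which is $0$);
\item part (d);
\item part (e), via $(\nu,\nu)-2c(\nu)=\sum_p l_p^2(\alpha_{i_p},\alpha_{i_p})$.
\end{itemize}

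One step is not justified as written: the Serre relations. You need $\sigma(X)^+=0$ in $U$ for each Serre element $X\in F$. You get this from ``$\sigma(\mathscr R)\subset\mathscr R$, hence $\sigma(\mathscr R)^+=0$.'' The second half presupposes that the map $F\to U$, $x\mapsto x^+$, kills all of $\mathscr R$. That means $\mathscr R$ would be generated by the Serre elements and the $a_{ij}=0$ commutators, which is a presentation theorem (the analogue of Lusztig's 33.1.3). The paper proves only the easy inclusion (Proposition~\ref{1.9}, Corollary~\ref{1.10}). Your argument would be fine if $U$ were defined as the quotient by all $x^{\pm}$ with $x\in\mathscr R$, but not for the explicit presentation of Definition~\ref{def:BBsuper}.

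You do not need that theorem, because $\sigma$ sends each Serre element to $\pm$ itself. After reindexing, the coefficient of $a_i^{u}a_{jk}a_i^{u'}$ in $\sigma(X)$, divided by its coefficient in $X$, is $(-1)^{u-u'}(-1)^{p(i)((u-u')p(kj)+\binom{u}{2}-\binom{u'}{2})}$; the $q$-binomials agree by the factorial formula. This ratio does not depend on $u$:
\begin{itemize}
\item $u-u'\equiv N=1-ka_{ij}$ modulo $2$;
\item $\binom{u}{2}+\binom{u'}{2}=\binom{N}{2}-uu'$;
\item when $p(i)=1$, $N$ is odd because $a_{ij}\in 2\Z$, so $uu'$ is even.
\end{itemize}
Hence $\tilde S(X)$ is a scalar times $K_{-\nu}X^+=0$. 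The same argument works on the $b$-side and for $S'$.

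Finally, you can drop your hedge in (b). No bar involution enters the computation. Moving $K_{i_p}^{-l_p}$ leftward past $a_{i_ql_q}$ gives exactly $q^{(l_p\alpha_{i_p},l_q\alpha_{i_q})}$, so the coefficient is $(-1)^{\mathrm{ht}\,\nu+e(\nu)}q^{c(\nu)}$. This equals the printed $(-q)^{c(\nu)}$ only because every $(\alpha_i,\alpha_j)=d_ia_{ij}$ is even under bar-consistency, which makes $c(\nu)$ even. Without that hypothesis the printed sign is wrong, and (e) as stated is consistent only with $q^{c(\nu)}$.
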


\vskip 7mm

\section{The quasi-R-Matrix and the Quantum Casimir}
\begin{definition}
Let $U\hat{\otimes}U$ be the completion of $U\otimes U $ with respect to the following sequence ($N\ge 1$)
$$H_{N}=(U^{+}U^{0}\sum_{\text{ht}\ \alpha\ge N} U_{\alpha}^{-})\otimes U+U\otimes (U^{-}U^{0}\sum_{\text{ht}\ \alpha\ge N} U_{\alpha}^{+})$$
where $U_{\alpha}^{+}=\{x\in U^{+}||x|=\alpha\}$ and $U_{\alpha}^{-}=\{x\in U^{-}||x|=-\alpha\}$.
\end{definition}

\begin{proposition}
For any $\alpha\in \N I^{\infty}$. let $B_{\alpha}$ be the basis of $U_{\alpha}^{+}$, $B_{\alpha}^{*}$ be the dual basis with respect to $(\cdot,\cdot)$. There is a unique family of elements $\Theta_{\alpha}\in U_{\alpha}^{-}\otimes U_{\alpha}^{+}$ such that $\Theta_{\alpha}=1\otimes 1$ and $\Theta=\sum_{\alpha}\Theta_{\alpha}\in U\hat{\otimes}U$ satisfies $\Delta(u)\Theta=\Theta\bar{\Delta}(u)$ and $\Theta_{\alpha}=(-1)^{\text{ht}(\alpha)+e(\alpha)}\sum_{b\in  B_{\alpha}}b^{-}\otimes b^{*+}\in U_{\alpha}^{-}\otimes U_{\alpha}^{+}$. We call the element $\theta$ the quasi-$\mathcal{R}$-matrix for U.

\end{proposition}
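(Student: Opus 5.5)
We follow Lusztig's construction of the quasi-$\mathcal R$-matrix, adapted to the super setting and the primitive generators. Write $\Theta=\sum_\alpha\Theta_\alpha$ with $\Theta_\alpha\in U^-_\alpha\otimes U^+_\alpha$ and $\Theta_0=1\otimes1$. The equation $\Delta(u)\Theta=\Theta\bar\Delta(u)$ is compatible with products, since both $\Delta$ and $\bar\Delta$ are algebra homomorphisms into the completion $U\hat\otimes U$. It therefore suffices to impose it for the generators $u=q^h$, $a_{il}$, $b_{il}$.

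\textbf{Reduction to recursive equations.} For $u=q^h$ the identity holds automatically, because each $\Theta_\alpha$ has total weight $0$. For $u=a_{il}$ we expand
\[
\Delta(a_{il})=a_{il}\otimes1+K_i^l\otimes a_{il},\qquad \bar\Delta(a_{il})=a_{il}\otimes1+K_i^{-l}\otimes a_{il}.
\]
Comparing components in $U^-_{\alpha-l\alpha_i}U^0U^+\otimes U^+_\alpha$, the condition becomes a pair of recursions for each $(i,l)\in I^\infty$:
\begin{align*}
&(a_{il}\otimes1)\Theta_\alpha-(-1)^{\ast}\Theta_\alpha(a_{il}\otimes1)=(-1)^{\ast}\Theta_{\alpha-l\alpha_i}(K_i^{-l}\otimes a_{il})-(K_i^l\otimes a_{il})\Theta_{\alpha-l\alpha_i},
\end{align*}
together with the analogous recursion coming from $b_{il}$. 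Here $\ast$ denotes the Koszul signs from the super tensor product. We rewrite the left side with Proposition~\ref{2.3}(b), which expresses $a_{il}x^- \mp x^-a_{il}$ through $\varrho^{i,l}(x)^-K_i^l$ and $\varrho_{i,l}(x)^-K_i^{-l}$. Separating the $K_i^l$ and $K_i^{-l}$ parts, the recursion becomes two conditions of the form
\[
(\varrho^{i,l}\otimes1)\Theta_\alpha=c\,\Theta_{\alpha-l\alpha_i}(1\otimes a_{il}),\qquad (\varrho_{i,l}\otimes1)\Theta_\alpha=c'\,(1\otimes a_{il})\Theta_{\alpha-l\alpha_i},
\]
where $c,c'$ are explicit scalars involving $(1-(-1)^{p(li)}q_i^{2l})$ and signs. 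The $b_{il}$ recursion yields the mirror conditions, obtained by applying $\omega$.

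\textbf{Uniqueness.} We argue by induction on $\operatorname{ht}\alpha$. If two families agree below $\alpha$, their difference $D$ in degree $\alpha$ satisfies $(\varrho_{i,l}\otimes1)D=0$ for all $(i,l)$. Expanding $D=\sum_b b^-\otimes y_b$ over a basis, Lemma~\ref{1.6}(a) gives $D=0$. Equivalently, one can use the corollary after the triangular decomposition: an element of $U^-$ that supercommutes with all $a_{il}$ vanishes.

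\textbf{Existence.} We verify that the candidate
\[
\Theta_\alpha=(-1)^{\operatorname{ht}\alpha+e(\alpha)}\sum_{b\in B_\alpha}b^-\otimes b^{*+}
\]
satisfies the two recursions. This candidate is independent of the choice of basis. The key identity is the adjunction
\[
(a_{il}y,x)=(a_{il},a_{il})(y,\varrho^{i,l}(x)),\qquad (ya_{il},x)=(y,\varrho_{i,l}(x))(a_{il},a_{il}).
\]
It shows that, in terms of dual bases, $\sum_b\varrho^{i,l}(b)\otimes b^*=(a_{il},a_{il})\sum_{b'}b'\otimes a_{il}b'^*$, and similarly for $\varrho_{i,l}$ with right multiplication. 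Substituting this into the recursions turns them into identities between $\Theta_\alpha$ and $\Theta_{\alpha-l\alpha_i}(1\otimes a_{il})$, up to scalars. The factor $(a_{il},a_{il})=(1-(-1)^{p(li)}q_i^{2l})^{-1}$ cancels the denominator in Proposition~\ref{2.3}.

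\textbf{Main obstacle.} The hardest step is matching the signs and powers of $q$. Moving $K_i^{\pm l}$ past $b^-$ and $a_{il}$, and applying the Koszul rule in $U\otimes U$, produces factors $q^{(l\alpha_i,\alpha)}$ and $(-1)^{p(li)p(b)}$. These must be reconciled with the normalization $(-1)^{\operatorname{ht}\alpha+e(\alpha)}$, using the recursion
\[
e(\alpha)=e(\alpha-l\alpha_i)+p(li)\,p(\alpha-l\alpha_i)\pmod 2.
\]
We will check this on the base case $\alpha=l\alpha_i$, which reduces to the defining relation for $a_{il}b_{il}$, and then track the exponents through the induction. If the signs fail to close, the fallback is to adjust the normalization sign by a parity-dependent factor chosen so that the base case and the recursion agree.
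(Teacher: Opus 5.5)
Your strategy is the paper's own. You impose $\Delta(u)\Theta=\Theta\bar\Delta(u)$ on generators, rewrite the supercommutators with Proposition~\ref{2.3}, separate the $K_i^{l}$- and $K_i^{-l}$-parts by the triangular decomposition, verify the dual-basis candidate through the adjunction formulas, and get uniqueness by induction on height from Lemma~\ref{1.6}. The paper does the same after pairing with an auxiliary $z\in U^+$, which yields \eqref{cas1}--\eqref{cas4}. Several details need repair, however, and the step you leave open is exactly the content of the normalization.

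\textbf{Transposed recursions and a wrong scalar.} Put $\alpha'=\alpha-(i,l)$. The $K_i^{l}$-part of Proposition~\ref{2.3}(b) carries $\varrho^{i,l}$ and must match $(K_i^l\otimes a_{il})\Theta_{\alpha'}$, so $(\varrho^{i,l}\otimes1)\Theta_\alpha\propto(1\otimes a_{il})\Theta_{\alpha'}$. The $K_i^{-l}$-part gives $(\varrho_{i,l}\otimes1)\Theta_\alpha\propto\Theta_{\alpha'}(1\otimes a_{il})$. Your displayed recursions swap left and right multiplication, so they are inconsistent with your own adjunction identity, which pairs $\varrho^{i,l}$ with left multiplication. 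That identity should read $\sum_b\varrho^{i,l}(b)\otimes b^*=(a_{il},a_{il})^{-1}\sum_{b'}b'\otimes a_{il}b'^*$. It is $(a_{il},a_{il})^{-1}=1-(-1)^{p(li)}q_i^{2l}$ that cancels the denominator of Proposition~\ref{2.3}.

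\textbf{Uniqueness expansion.} Expand $D=\sum_c x_c^-\otimes c^+$ over a basis of the second factor, not the first. Then $(\varrho_{i,l}\otimes1)D=0$ gives $\varrho_{i,l}(x_c)=0$ for all $(i,l)$, and Lemma~\ref{1.6} applies.

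\textbf{The sign.} You cannot fall back on renormalizing, because the sign is part of the statement, but the check does close. Write $\Theta_\alpha=s(\alpha)\sum_b b^-\otimes b^{*+}$. The super-tensor signs $(-1)^{p(li)p(\alpha')}$ and the factor $(-1)^{p(li)p(x)-p(li)}$ in Proposition~\ref{2.3} combine so that each of the four component identities (two for $a_{il}$, two for $b_{il}$) reduces to $s(\alpha)=-(-1)^{p(li)p(\alpha')}s(\alpha')$, with $s(0)=1$. For $\alpha=(i,l)$ this is just the defining relation for $a_{il}b_{il}-(-1)^{p(li)}b_{il}a_{il}$.

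Your recursion for $e$ then gives $s(\alpha)=(-1)^{\mathrm{ht}(\alpha)+e(\alpha)}$, but only under two conditions. First, $U^+$ must be graded by $\N I^\infty$; this is legitimate, since with primitive generators the form pairs distinct $\N I^\infty$-degrees to zero. Second, $\mathrm{ht}(\alpha)$ must count the generators $a_{jk}$ in a monomial of degree $\alpha$. Under the weight grading that your notation $\alpha-l\alpha_i$ suggests, the formula already fails at $\Theta_{(i,2)}=-(a_{i2},a_{i2})^{-1}b_{i2}\otimes a_{i2}$ for $i\in I^{\mathrm{im}}$, where $\mathrm{ht}(2\alpha_i)=2$ and $e=0$. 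Moreover, $e$ is not well defined on the weight space $U^+_{2\alpha_i}$.
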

\begin{proof}
Consider the element $\Theta\in U\hat{\otimes}U$ of the form $\Theta=\sum_{\alpha}\Theta_{\alpha}$ with $\Theta_{\alpha}=\sum_{b,b'\in B_{\alpha}}c_{b',b}b'^{-}\otimes b*^{+},\ c_{b,b'}\in \Q(q)$. The set of $u\in U$ such that $\Delta(u)\Theta=\Theta\bar{\Delta}(u)$ is a subalgebra of U containing $U^{0}$. To prove this set is equal to U, it is necessary and sufficient that it contains $a_{il}$ and $b_{il}$ for all $(il)\in I^{\infty}$. This amounts to showing that
\begin{align*}
&\sum_{b_{1},b_{2}\in B_{\alpha}}c_{b_1,b_2}a_{il}b_{1}^{-}\otimes b_{2}^{*+}+\sum_{b_{3},b_4\in B_{\alpha-l\alpha_{i}}}(-1)^{p(li)p(b_{3})}c_{b_3,b_4}K_{i}^{l}b_{3}^{-}\otimes a_{il}b_{4}^{*+}\\
=&\sum_{b_{1},b_{2}\in B_{\alpha}}(-1)^{p(li)p(b_{2})}c_{b_1,b_2}b_{1}^{-}a_{il}\otimes b_{2}^{*+}+\sum_{b_{3},b_4\in B_{\alpha-l\alpha_{i}}}c_{b_3,b_4}b_{3}^{-}K_{i}^{-l}\otimes b_{4}^{*+}a_{il},
\end{align*}
and
\begin{align*}
&\sum_{b_{1},b_{2}\in B_{\alpha}}(-1)^{p(li)p(b_{1})}c_{b_1,b_2}b_{1}^{-}\otimes b_{il}b_{2}^{*+}+\sum_{b_{3},b_4\in B_{\alpha-l\alpha_{i}}}c_{b_3,b_4}b_{il}b_{3}^{-}\otimes K_{i}^{-l}b_{4}^{*+}\\
=&\sum_{b_{1},b_{2}\in B_{\alpha}}c_{b_1,b_2}b_{1}^{-}\otimes b_{2}^{*+}b_{il}+\sum_{b_{3},b_4\in B_{\alpha-l\alpha_{i}}}(-1)^{p(li)p(b_{4})}c_{b_3,b_4}b_{3}^{-}b_{il}\otimes b_{4}^{*+}K_{i}^{l}.
\end{align*}
Let $z\in U^{+}$, since the inner product is nondegenerate, these equalities are equivalent to
\begin{align*}
&\sum_{b_{1},b_{2}\in B_{\alpha}}c_{b_1,b_2}(a_{il}b_{1}^{-}-(-1)^{p(li)p(b_{2})}b_{1}^{-}a_{il})(b_{2}^{*},z)\\
&+\sum_{b_{3},b_4\in B_{\alpha-l\alpha_{i}}}c_{b_3,b_4}((-1)^{p(li)p(b_{3})}K_{i}^{l}b_{3}^{-}( a_{il}b_{4}^{*},z)-b_{3}^{-}K_{i}^{-l}(b_{4}^{*}a_{il},z))\\
&=0,
\end{align*}
and
\begin{align*}
&\sum_{b_{1},b_{2}\in B_{\alpha}}c_{b_1,b_2}((-1)^{p(li)p(b_{1})}b_{il}b_{2}^{*+}-b_{2}^{*+}b_{il})(b_{1},z)\\
&+\sum_{b_{3},b_4\in B_{\alpha-l\alpha_{i}}}c_{b_3,b_4}(K_{i}^{-l}b_{4}^{*+}(a_{il}b_{3},z)-(-1)^{p(li)p(b_{4})}b_{4}^{*+}K_{i}^{l}(b_{3} a_{il},z))\\
&=0,
\end{align*}
Note that $p(b_{1})=p(b_{2})=p(b_{3})+p(li)=p(b_{4})+p(li)$.  we have
\begin{align*}
&\sum_{b_{1},b_{2}\in B_{\alpha}}c_{b_1,b_2}(1-(-1)^{p(li)}q_{i}^{2l})^{-1}(K_{i}^{l}\varrho^{i,l}(b_{1})^{-}-(-1)^{p(li)p(b_{1})-p(li)}\varrho_{i,l}(b_{1})^{-}K_{i}^{-l})(b_{2}^{*},z)\\
&+\sum_{b_{3},b_4\in B_{\alpha-l\alpha_{i}}}c_{b_3,b_4}(1-(-1)^{p(li)}q_{i}^{2l})^{-1}((-1)^{p(li)p(b_{3})}K_{i}^{l}b_{3}^{-}( b_{4}^{*},\varrho^{i.l}(z))-b_{3}^{-}K_{i}^{-l}(b_{4}^{*},\varrho_{i.l}(z)))\\
&=0,
\end{align*}
and
\begin{align*}
&-\sum_{b_{1},b_{2}\in B_{\alpha}}c_{b_1,b_2}(1-(-1)^{p(li)}q_{i}^{2l})^{-1}(\varrho_{i,l}(b_2)^{+}K_{i}^{l}-(-1)^{p(li)p(b_2)-p(li)}K_{i}^{-l}\varrho^{i,l}(b_2)^{+})(b_{1},z)\\
&+\sum_{b_{3},b_4\in B_{\alpha-l\alpha_{i}}}c_{b_3,b_4}(1-(-1)^{p(li)}q_{i}^{2l})^{-1}(K_{i}^{-l}b_{4}^{*+}(b_{3},\varrho^{i,l}(z))-(-1)^{p(li)p(b_{4})}b_{4}^{*+}K_{i}^{l}(b_{3} ,\varrho_{i,l}(z)))\\
&=0,
\end{align*}
Using the triangular decomposition, this is equivalent to the equalities

\begin{align}
\label{cas1}
\sum_{b_1,b_2}c_{b_1,b_2} (b_2^*,z)\varrho^{i,l}(b_1)
 +\sum_{b_3,b_4}(-1)^{p(li)p(b_4)} c_{b_3,b_4}(b_4^*,\varrho^{i,l}(z))b_3&=0, \\
 \label{cas2}
\sum_{b_1,b_2}c_{b_1,b_2} (-1)^{p(b_1)p(li)-p(li)}(b_2^*,z)\varrho_{i,l}(b_1)
 +\sum_{b_3,b_4} c_{b_3,b_4}(b_4^*,\varrho_{i,l}(z))b_3&=0, \\
 \label{cas3}
\sum_{b_1,b_2}c_{b_1,b_2} (b_1,z)\varrho_{i,l}(b_2)
 +\sum_{b_3,b_4}(-1)^{p(li)p(b_4)}c_{b_3,b_4}(b_3,\varrho_{i,l}(z))b_4^{*}&=0, \\
 \label{cas4}
\sum_{b_1,b_2}(-1)^{p(li)p(b_2)-p(li)}c_{b_1,b_2} (b_1,z)\varrho^{i,l}(b_2)
 +\sum_{b_3,b_4}c_{b_3,b_4}(b_3,\varrho^{i,l}(z))b_4^{*}&=0.
\end{align}

When $c_{b,b'}=(-1)^{e(b)+l(b)}\delta_{b,b'}$, we have
\begin{align*}
\sum_{b}(-1)^{e(\mu)} (b^*,z)\varrho^{i,l}(b)
 -\sum_{b'}(-1)^{e(\mu)} (b'^*,\varrho^{i,l}(z))b'&=0, \\
\sum_{b} (-1)^{e(\mu-(i,l))}(b^*,z)\varrho_{i,l}(b)
 -\sum_{b'} (-1)^{e(\mu-(i,l))}(b'^*,\varrho_{i,l}(z))b'&=0, \\
\sum_{b}(-1)^{e(\mu)} (b,z)\varrho_{i,l}(b)
 -\sum_{b'}(-1)^{e(\mu)} (b',\varrho_{i,l}(z))b'^{*}&=0, \\
\sum_{b} (-1)^{e(\mu-(i,l))}(b,z)\varrho^{i,l}(b)
 -\sum_{b'} (-1)^{e(\mu-(i,l))}(b',\varrho^{i,l}(z))b'^{*}&=0.
\end{align*}
These equalities can be easily verified by checking when $z$ is a basis or dual basis element.

We will show that $\Theta$ is unique. Suppose $\Theta_{\alpha}'$ and $\Theta$ also satisfy the conditions. Then $\Theta-\Theta'=\sum c_{b,b'}b^{-}\otimes b'^{+}$ satisfy \eqref{cas1}-\eqref{cas4} and $c_{b,b}=0$ for $b\in B_{0}$. Suppose $c_{b,b'}=0$ for $\text{ht}\ (\alpha')\textless n$ and $\text{ht}\ \alpha=n$. Then the second sum in \eqref{cas1} is zero, hence $\varrho^{i,l}(\sum_{b_1,b_2}c_{b_1,b_2} (b_2^*,z)b_1)=0$, so $\sum_{b_1,b_2}c_{b_1,b_2} (b_2^*,z)b_1=0$ and $(\sum_{b_1,b_2}c_{b_1,b_2} b_2^*,z)=0$ for all $z\in F\slash \R$. Therefore $c_{b_{1},b_{2}}=0$ for all $b_{1},b_{2}\in B_{\alpha}$.
\end{proof}

\begin{corollary}
We have $\Theta\bar{\Theta}=\bar{\Theta}\Theta=1\otimes 1$ with equality in the completion.
\end{corollary}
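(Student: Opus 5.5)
The plan is the standard Lusztig argument (Theorem 4.1.2 in his book), adapted to the super signs. Put $\bar\Theta=\sum_\alpha\bar\Theta_\alpha$, where the bar involution acts factorwise on $U\otimes U$. Applying the bar involution to the intertwining identity $\Delta(u)\Theta=\Theta\bar\Delta(u)$ for $\bar u$ gives
\[
\bar\Delta(u)\bar\Theta=\bar\Theta\Delta(u).
\]
Here we use $\overline{\Delta(\bar u)}=\bar\Delta(u)$ and $\overline{\bar\Delta(\bar u)}=\Delta(u)$, which are the definition of $\bar\Delta$ on $U$, compatible with the twisted coproduct $\bar\varrho$ of Section 1. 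Multiplying the two identities gives
\[
\Delta(u)\,\Theta\bar\Theta=\Theta\bar\Delta(u)\bar\Theta=\Theta\bar\Theta\,\Delta(u)
\qquad\text{for all } u\in U,
\]
so $\Theta\bar\Theta$ commutes with $\Delta(U)$. Likewise $\bar\Theta\Theta$ satisfies $\bar\Delta(u)\bar\Theta\Theta=\bar\Theta\Theta\bar\Delta(u)$. All products are taken in the completion $U\hat\otimes U$; they are well defined because $\Theta_\alpha\Theta_\beta$ lies in $U^-_{\alpha+\beta}U^0$-free parts of large height modulo $H_N$ for large $\alpha,\beta$. Concretely, $\Theta\bar\Theta=\sum_\gamma\sum_{\alpha+\beta=\gamma}\Theta_\alpha\bar\Theta_\beta$, and each $\gamma$-component is a finite sum lying in $U^-_\gamma\otimes U^+_\gamma$, using the super multiplication signs in $U\otimes U$.

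Next I show that any element $X=\sum_\gamma X_\gamma\in U\hat\otimes U$ with $X_\gamma\in U^-_\gamma\otimes U^+_\gamma$, $X_0=1\otimes1$, and $\Delta(u)X=X\Delta(u)$ for all $u$ must equal $1\otimes1$. This is the same uniqueness mechanism as in the preceding proposition. Suppose $X_\gamma=0$ for $0<\operatorname{ht}\gamma<n$, and take $\operatorname{ht}\gamma=n$. Compare the components of $\Delta(a_{il})X=X\Delta(a_{il})$ lying in $U^-_{\gamma-l\alpha_i}U^0\otimes U^+_{\gamma}$-type degrees. Using Proposition~\ref{2.3}(b) for the commutator $a_{il}x^--(-1)^{p(li)p(x)}x^-a_{il}$, the terms coming from $X_{\gamma-l\alpha_i}$ vanish by the induction hypothesis, unless $\gamma=l\alpha_i$. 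If $\gamma=l\alpha_i$, the $X_0$ contributions cancel, since $1\otimes1$ commutes with $\Delta$. What remains is
\[
\sum c_{b,b'}\bigl(K_i^l\varrho^{i,l}(b)^- -(\pm)\varrho_{i,l}(b)^-K_i^{-l}\bigr)\otimes b'^+=0 .
\]
By the triangular decomposition, this forces $\sum_b c_{b,b'}\varrho^{i,l}(b)=0$ for all $(i,l)$ and all $b'$. Lemma~\ref{1.6}(b) then gives $\sum_b c_{b,b'}b=0$, hence $X_\gamma=0$. The same argument works verbatim with $\bar\Delta$ in place of $\Delta$, which handles $\bar\Theta\Theta$.

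Finally, I check the normalization $(\Theta\bar\Theta)_0=\Theta_0\bar\Theta_0=1\otimes1$, which holds because $\bar{1}=1$. The criterion then gives $\Theta\bar\Theta=1\otimes1$, and similarly $\bar\Theta\Theta=1\otimes1$.

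The main obstacle I expect is the bookkeeping in the inductive step. One has to verify that the only surviving terms in the relevant bidegree are exactly those giving $\varrho^{i,l}$ applied to the first tensor factor. One also has to keep the super signs $(-1)^{p(li)p(b)}$ and the $K_i^{\pm l}$ placements consistent, so that the $U^0$-parts $K_i^l$ and $K_i^{-l}$ can be separated by the triangular decomposition. I would handle this by following the computation already done in the proof of the preceding proposition, specifically the equivalence with \eqref{cas1}, setting the second sum there to zero.
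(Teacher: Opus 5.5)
Your strategy is the standard argument from Lusztig's book, and it is surely what the paper intends: the paper states the corollary without proof, right after the uniqueness of $\Theta$. Your uniqueness half is fine. For $X$ with $X_0=1\otimes 1$ commuting with $\Delta(a_{il})$, the $X_0$-terms cancel, and Proposition~\ref{2.3}(b), the triangular decomposition and Lemma~\ref{1.6}(b) then kill $X_\gamma$, exactly as in the paper's uniqueness argument. Moreover, $\bar\Theta\Theta=\overline{\Theta\bar\Theta}$, so the second identity follows from the first once bar is an algebra automorphism.

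The gap is your first step. Applying bar to $\Delta(\bar u)\Theta=\Theta\bar\Delta(\bar u)$ needs an antilinear algebra involution of $U$, and the paper never constructs one. With the paper's normalizations, the natural candidate is not an algebra map. That candidate is $\overline{a_{il}}=a_{il}$, $\overline{b_{il}}=b_{il}$, $\overline{K_i}=K_i^{-1}$, $\bar q=-q^{-1}$, i.e.\ the bar of Section 1 on $U^{\pm}$. The left side $a_{il}b_{il}-(-1)^{p(li)}b_{il}a_{il}$ is bar-invariant, but the bar of $\frac{K_i^l-K_i^{-l}}{1-(-1)^{p(li)}q_i^{2l}}$ is $(-1)^{p(li)}q_i^{2l}$ times itself.

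This is not a technicality. Put $\epsilon=(-1)^{p(i)}$. The intertwining property forces $\Theta_{\alpha_i}=-(1-\epsilon q_i^{2})\,b_i\otimes a_i$, so with that bar
\[
(\Theta\bar\Theta)_{\alpha_i}=\Theta_{\alpha_i}+\bar\Theta_{\alpha_i}=\bigl(\epsilon(q_i^{2}+q_i^{-2})-2\bigr)\,b_i\otimes a_i\neq 0 ,
\]
and the statement itself fails.

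To make your argument (and the corollary) correct, you must first fix the bar on $U$. One choice that works is $\overline{a_{il}}=a_{il}$, $\overline{b_{il}}=(-1)^{p(li)}q_i^{2l}b_{il}$, $\overline{K_j}=K_j^{-1}$. Work with the $K_j$ rather than arbitrary $q^h$: under $\bar q=-q^{-1}$ the relation $q^{h}a_{il}q^{-h}=q^{l\langle h,\alpha_i\rangle}a_{il}$ picks up a sign $(-1)^{l\langle h,\alpha_i\rangle}$. By contrast, $d_ja_{ji}$ is always even, by bar-consistency and condition (iii). You then need to check three things:
\begin{itemize}
\item[(i)] this map respects all defining relations;
\item[(ii)] it is an involution;
\item[(iii)] $\overline{\Delta(\bar u)}$ reproduces the $\bar\Delta$ used in the proposition, namely $a_{il}\otimes 1+K_i^{-l}\otimes a_{il}$ and $b_{il}\otimes K_i^{l}+1\otimes b_{il}$. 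A direct computation confirms (iii).
\end{itemize}
With $\bar\Theta$ taken with respect to this bar, $\bar\Theta_{\alpha_i}=-\Theta_{\alpha_i}$, and your proof then goes through verbatim.
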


$$(a_{il}\otimes 1)\Theta_{\alpha}+(K_{i}^{l}\otimes a_{il})\Theta_{\alpha-li}=\Theta_{\alpha}(a_{il}\otimes 1)+\Theta_{\alpha-li}(K_{i}^{-l}\otimes a_{il}),$$
$$(1\otimes b_{il})\Theta_{\alpha}+(b_{il}\otimes K_{i}^{-l})\Theta_{\alpha-li}=\Theta_{\alpha}(1\otimes b_{il})+\Theta_{\alpha-li}(b_{il}\otimes K_{i}^{l}).$$
Let $\Theta_{\le n}=\sum_{\text{ht}\alpha\le n}\Theta_{\alpha}$, we have
\begin{align*}
&(a_{il}\otimes 1+K_{i}^{l}\otimes a_{il})\Theta_{\le n}-\Theta_{\le n}(a_{il}\otimes 1+K_{i}^{l}\otimes a_{il})\\
&=\sum_{\text{ht}\alpha=n}(K_{i}^{l}\otimes a_{il})\Theta_{\alpha}-\sum_{\text{ht}\alpha=n}\Theta_{\alpha}(K_{i}^{-l}\otimes a_{il}),\\
&(b_{il}\otimes K_{i}^{-l}+1\otimes b_{il})\Theta_{\le n}-\Theta_{\le n}(b_{il}\otimes K_{i}^{l}+1\otimes b_{il})\\
&=\sum_{\text{ht}\alpha=n}(b_{il}\otimes K_{i}^{-l})\Theta_{\alpha}-\sum_{\text{ht}\alpha=n}(b_{il}\otimes K_{i}^{l})\Theta_{\alpha}.
\end{align*}

Let $S$ be the antipode and $\mathbf{m}:\ U\otimes U\to U$ be the multiplication map $x\otimes x'\to xx'$. Applying $\mathbf{m}(S\otimes 1)$ to the identities above, for any $n\ge 0$, we have
\begin{align*}
&\sum_{\text{ht}\alpha\le n}\sum_{b\in B_{\alpha}}(-1)^{\text{ht}\alpha+e(\alpha)}(S(a_{il}b^{-})b^{*+}+(-1)^{p(\alpha)p(li)}S(K_{i}^{l}b^{-})a_{il}b^{*+}\\
&-(-1)^{p(\alpha)p(li)}S(b^{-}a_{il})b^{*+}-S(b^{-}K_{i}^{l})b^{*+}a_{il})\\
&=\sum_{\text{ht}\alpha=n}\sum_{b\in B_{\alpha}}(-1)^{n+e(\alpha)}((-1)^{p(li)p(\alpha)}S(K_{i}^{l}b^{-})a_{il}b^{*+}-S(b^{-}K_{i}^{-l})b^{*+}a_{il}),
\end{align*}
and
\begin{align*}
&\sum_{\text{ht}\alpha\le n}\sum_{b\in B_{\alpha}}(-1)^{\text{ht}\alpha+e(\alpha)}((-1)^{p(\alpha)p(li)}S(b^{-})b_{il}b^{*+}+S(b_{il}b^{-})K_{i}^{-l}b^{*+}\\
&-S(b^{-})b^{*+}b_{il}-(-1)^{p(\alpha)p(li)}S(b^{-}b_{il})b^{*+}K_{i}^{l})\\
&=\sum_{\text{ht}\alpha=n}\sum_{b\in B_{\alpha}}(-1)^{n+e(\alpha)}(S(b_{il}b^{-})K_{i}^{-l}b^{*+}-(-1)^{p(li)p(\alpha)}S(b^{-}b_{il})b^{*+}K_{i}^{l}).
\end{align*}
Let $\Omega_{\le n}=\sum_{\text{ht}\le n}\sum_{b\in B_{\alpha}}(-1)^{\text{ht} \alpha+e(\alpha)}S(b^{-})b^{*+}$. By the definition of $S$, we obtain that
\begin{align*}
&S(a_{il}b^{-})b^{*+}+(-1)^{p(\alpha)p(li)}S(K_{i}^{l}b^{-})a_{il}b^{*+}\\
&=-(-1)^{p(li)p(\alpha)}S(b^{-})K_{i}^{-l}a_{il}b^{*+}+(-1)^{p(li)p(\alpha)}S(b^{-})K_{i}^{-l}a_{il}b^{*+}\\
&=0
\end{align*}
We have
\begin{align*}
&K_{i}^{-l}a_{il}\Omega_{\le n}-K_{i}^{l}\Omega_{\le n}a_{il}\\
&=\sum_{\text{ht}\alpha=n}\sum_{b\in B_{\alpha}}(-1)^{n+e(\alpha)}((-1)^{p(li)p(\alpha)}S(K_{i}^{l}b^{-})a_{il}b^{*+}-S(b^{-}K_{i}^{-l})b^{*+}a_{il}),
\end{align*}
and
\begin{align*}
&\Omega_{\le n}b_{il}-b_{il}K_{i}^{l}\Omega_{\le n}K_{i}^{l}\\
&=\sum_{\text{ht}\alpha=n}\sum_{b\in B_{\alpha}}(-1)^{n+e(\alpha)}(S(b_{il}b^{-})K_{i}^{-l}b^{*+}-(-1)^{p(li)p(\alpha)}S(b^{-}b_{il})b^{*+}K_{i}^{l})
\end{align*}
\begin{proposition}
Let $M\in O_{int}$, then for any $m\in M$, we have $\Omega(m)=\Omega_{\le n}m$ is independent of $n$ for large enough $n$, write $\Omega(m)=\sum_{b}(-1)^{\text{ht}|b|+e(|b|)}S(b^{-})b^{*+}m$. Then we have
$$K_{i}^{-l}a_{il}\Omega=K_{i}^{l}\Omega a_{il},\ \ \Omega b_{il}=b_{il}K_{i}^{l}\Omega K_{i}^{l},\ \ \Omega K_{i}^{l}=K_{i}^{l}\Omega.$$
\end{proposition}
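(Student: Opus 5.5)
We first show that $\Omega(m)$ is well defined. Take $m\in M_\mu$ with $M\in O_{int}$. By Definition \ref{2.6}(b) there are finitely many $\lambda_1,\dots,\lambda_r$ with $\text{wt}(M)\subset\bigcup_j(\lambda_j-Q_+)$. For $b\in B_\alpha$ the vector $b^{*+}m$ has weight $\mu+\alpha$. So $b^{*+}m=0$ unless $\mu+\alpha\le\lambda_j$ for some $j$, and only finitely many $\alpha\in Q_+$ satisfy this. Hence there is an $N$ with $b^{*+}m=0$ whenever $\text{ht}\,\alpha>N$. For $n\ge N$ the sum $\Omega_{\le n}m$ therefore stabilizes, and we set $\Omega(m)$ to be this common value. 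This gives a well-defined linear map $\Omega\colon M\to M$.

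Next, $\Omega K_i^l=K_i^l\Omega$. Each summand $S(b^-)b^{*+}$ has weight $-\alpha+\alpha=0$: indeed $S(b^-)$ equals a scalar times $\sigma(b)^-K_\alpha$ by the lemma on the antipode, so it lies in weight $-\alpha$. Hence every $\Omega_{\le n}$ commutes with $K_i^l$, and so does $\Omega$ on $M$.

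For the other two identities we use the identities already derived just before the statement:
\[
K_i^{-l}a_{il}\Omega_{\le n}-K_i^{l}\Omega_{\le n}a_{il}=\sum_{\text{ht}\alpha=n}\sum_{b\in B_\alpha}(-1)^{n+e(\alpha)}\bigl((-1)^{p(li)p(\alpha)}S(K_i^lb^-)a_{il}b^{*+}-S(b^-K_i^{-l})b^{*+}a_{il}\bigr),
\]
and the analogous identity for $b_{il}$. Fix $m$ and apply both sides to $m$. Every term on the right-hand side contains a factor $a_{il}b^{*+}m$, $b^{*+}a_{il}m$, $b^{*+}m$ or $b^{*+}b_{il}m$, with $b^{*+}\in U^+_\alpha$ and $\text{ht}\,\alpha=n$. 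Choose $n$ larger than the stabilization thresholds for $m$, $a_{il}m$ and $b_{il}m$. By the weight argument of the first step, all these terms vanish, so the right-hand sides are zero.

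Since $\Omega_{\le n}$ agrees with $\Omega$ on $m$, $a_{il}m$, $b_{il}m$ and $K_i^{l}m$ for such large $n$, we obtain
\[
K_i^{-l}a_{il}\Omega(m)=K_i^l\Omega(a_{il}m),\qquad \Omega(b_{il}m)=b_{il}K_i^l\Omega(K_i^lm).
\]
One subtlety: the left-hand sides involve $\Omega_{\le n}$ applied to $m$, while the displayed terms apply $\Omega_{\le n}$ to $a_{il}m$ and to $b_{il}m$. We therefore take $n$ at least the maximum of the three thresholds.

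The main obstacle is not this limiting argument, which is formal. It is confirming that the two displayed identities preceding the statement are correct, including the signs $(-1)^{p(li)p(\alpha)}$ and the placement of $K_i^{\pm l}$. Those identities come from applying $\mathbf m(S\otimes1)$ to the intertwining relations $\Delta(u)\Theta=\Theta\bar\Delta(u)$, truncated at height $n$. I would re-derive the cancellation
\[
S(a_{il}b^-)b^{*+}+(-1)^{p(\alpha)p(li)}S(K_i^lb^-)a_{il}b^{*+}=0,
\]
together with its analogue for $b_{il}$, from $S(xy)=(-1)^{p(x)p(y)}S(y)S(x)$ and $S(a_{il})=-K_i^{-l}a_{il}$. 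In particular I would check that the leftover boundary terms are exactly those at height $n$, so that the truncated identities hold as stated.
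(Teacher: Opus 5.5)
Your proposal is correct and follows the paper's route. The paper derives exactly these truncated identities by applying $\mathbf{m}(S\otimes 1)$ to $\Delta(u)\Theta_{\le n}-\Theta_{\le n}\bar{\Delta}(u)$ and then states the proposition without further proof. You supply the stabilization and vanishing-of-boundary-terms argument it leaves implicit, using condition (b) of $O_{int}$ and the fact that $S(b^-)b^{*+}$ has weight zero. Your argument also survives possible slips in those displayed identities, such as an overall sign in the $b_{il}$ case, or boundary terms spread over heights $n-l+1,\dots,n$ if height is measured in $Q_+$ rather than $\N I^{\infty}$: all that matters is that each boundary term applies some $b^{*+}$ of large height to a vector of bounded weight.
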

\vskip 7mm
Let $c$ be the linear map defined on the highest weight module $M(\lambda)\in O_{int}$ with highest weight $\lambda$, such that $$c(m)=q^{f(\mu)}\Omega m,\ \text{if} \ m\in M_{\mu},$$
where $f(\mu)=(\mu,\mu+2\rho)$ and $\rho$ is defined by $(\alpha_{i},2\rho)=(\alpha_{i},\alpha_{i})$ for any $i\in I$. Know that $\forall (i,l)\in I^{\infty}$,
$$f(\mu-l\alpha_{i})-f(\mu)+2(l\alpha_{i},\mu)=(l^{2}-l)(\alpha_{i},\alpha_{i}).$$
Since $\Omega b_{il}=b_{il}\Omega K_{i}^{2l}$, for any $m\in M_{\mu}$, we have
\begin{align*}
c(b_{il}m)&=q^{f(\mu-l\alpha_{i})}\Omega b_{il}m\\
&=q^{f(\mu-l\alpha_{i})}b_{il}\Omega K_{i}^{2l}m\\
&=q^{f(\mu-l\alpha_{i})+2(l\alpha_{i},\mu)}b_{il}\Omega m\\
&=q^{f(\mu-l\alpha_{i})+2(l\alpha_{i},\mu)-f_{\mu}}b_{il}c(m)\\
&=\begin{cases}
 q^{l(l-1)(\alpha_{i},\alpha_{i})}b_{il}c(m) \ \ & \ \text{if} \ i \in I^{\text{im}}, \\
 b_{i}c(m) \ \ & \ \text{if} \ i \in I^{\text{re}}.
 \end{cases} \\
\end{align*}
Setting $\lambda-\mu=\sum(i_{k},l_{k})\in \N I^{\infty}$, we have
$$c(m)=q^{f(\mu)}\Omega m=q^{f(\lambda)+\sum l_{k}(l_{k}-1)(\alpha_{k},\alpha_{k})}m.$$
If $M(\lambda)$ has a nontrivial submodule $M_{1}$, there is a nonzero element $v_{\mu}=\prod b_{i_{k}l_{k}}v_{\lambda}\neq 0$ such that $a_{il}v_{\mu}=0$, for any $(i,l)\in I^{\infty}$. Moreover  $\Omega(v_{\mu})=1$, hence
$$f(\mu)-f(\lambda)=\sum l_{k}(l_{k}-1)(\alpha_{k},\alpha_{k}),$$
and
$$2(\lambda,\sum l_{k}\alpha_{k})=2\sum_{p<q}l_{p}l_{q}(\alpha_{p},\alpha_{q})$$
If $\lambda\in P^{+}$, $(\lambda,l_{k}\alpha_{k})=0$. By the definition of $O_{int}$, $b_{i_{k}l_{k}}v_{\lambda}=0$, So $M_{1}=0$ and we get a contradiction. We have the following proposition.
\begin{proposition}\label{4.5}
If $M_{\lambda}\in O_{\text{int}}$ is a highest weight module with highest weight $\lambda\in P^{+}$, then $M_{\lambda}$ is irreducible.
\end{proposition}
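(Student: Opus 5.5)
We will prove the statement by combining the quantum Casimir operator $c$ just constructed with the defining conditions of $O_{int}$. Let $M=U v_\lambda$ with $\lambda\in P^+$, $M\in O_{int}$. Suppose $N\subsetneq M$ is a nonzero proper submodule. Since $N$ inherits a weight decomposition and all weights of $M$ lie in $\lambda-Q_+$ with $M_\lambda=\Q(q)v_\lambda$, the space $N_\lambda$ is zero. Choose a weight $\mu$ of $N$ that is maximal among the weights of $N$; such a weight exists because $\dim M_\nu<\infty$ and the weights are bounded above. Then any nonzero $v\in N_\mu$ satisfies $a_{il}v=0$ for all $(i,l)\in I^\infty$, i.e.\ $v$ is a highest weight vector of weight $\mu<\lambda$. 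Write $\lambda-\mu=\sum_k l_k\alpha_{i_k}$, where $v$ is a combination of monomials $b_{i_1l_1}\cdots b_{i_rl_r}v_\lambda$.

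The first key step compares two computations of $c$ on $v$. On one hand, $c$ acts on $M$ through the formula derived above. From $c(b_{il}m)=q^{l(l-1)(\alpha_i,\alpha_i)}b_{il}c(m)$ and $c(v_\lambda)=q^{f(\lambda)}v_\lambda$, each monomial $b_{i_1l_1}\cdots b_{i_rl_r}v_\lambda$ is an eigenvector with eigenvalue $q^{f(\lambda)+\sum_k l_k(l_k-1)(\alpha_{i_k},\alpha_{i_k})}$. Because $\Omega$ kills nothing of weight zero and $\Omega_{\le 0}=1$, on a vector annihilated by all $a_{il}$ only the $\alpha=0$ term of $\Omega$ survives. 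Hence $c(v)=q^{f(\mu)}v$ on the other hand. For this comparison we need all monomials contributing to $v$ to share the same eigenvalue. We will restrict to a fixed multiset $\{(i_k,l_k)\}$ of indices, decomposing $M_\mu$ by the $c$-eigenvalues, which are determined by the multiset. Since $N$ is $c$-stable, we may take $v$ to be an eigenvector.

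Equating the two eigenvalues gives $f(\mu)-f(\lambda)=\sum_k l_k(l_k-1)(\alpha_{i_k},\alpha_{i_k})$. Expanding $f(\mu)=(\mu,\mu+2\rho)$ with $\mu=\lambda-\beta$ and using $(\rho,\alpha_i)=\tfrac12(\alpha_i,\alpha_i)$, this becomes
\[
2(\lambda,\beta)+\sum_k l_k(\alpha_{i_k},\alpha_{i_k})\bigl(l_k-1\bigr)\cdot 0=-2\sum_{p<q}l_pl_q(\alpha_{i_p},\alpha_{i_q})-\sum_k l_k^2(\alpha_{i_k},\alpha_{i_k})+\sum_k l_k^2(\alpha_{i_k},\alpha_{i_k}),
\]
that is, $(\lambda,\beta)+\sum_{p<q}l_pl_q(\alpha_{i_p},\alpha_{i_q})=0$. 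Every term is nonnegative in the wrong direction: $(\lambda,l_k\alpha_{i_k})=d_{i_k}l_k\langle h_{i_k},\lambda\rangle\ge0$, while $(\alpha_{i_p},\alpha_{i_q})\le 0$ for $i_p\ne i_q$. The main obstacle is to force each term to vanish. Real indices must be handled with care, since for $i_p=i_q\in I^{re}$ we have $(\alpha_i,\alpha_i)>0$. Here we plan to first use the $W$-invariance of $(\,,\,)$ together with local nilpotence of the $b_i$ and the $\mathfrak{sl}_2$-type strings, exactly as in the real case, to reduce to $\mu$ dominant. After this reduction the left side is $|\lambda+\rho|^2-|\mu+\rho|^2$ plus the imaginary correction, which is strictly positive unless $\beta$ is supported on imaginary indices with $(\lambda,\alpha_{i_k})=0$ and $(\alpha_{i_p},\alpha_{i_q})=0$.

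In the remaining degenerate case, take an imaginary $i_k$ with $\langle h_{i_k},\lambda\rangle=0$. By condition (e) of Definition \ref{2.6}, $b_{i_kl_k}v_\lambda=0$. Since the other $b$'s in the monomial commute (up to sign) with $b_{i_kl_k}$ by the relation for $a_{ij}=0$, we can move $b_{i_kl_k}$ to act first on $v_\lambda$, so every contributing monomial vanishes and $v=0$. This contradiction shows that $N=0$, so $M$ is irreducible. We expect the positivity reduction for real indices to be the delicate step.
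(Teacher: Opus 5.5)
Your strategy is the paper's: compare the two evaluations of the Casimir $c$ on a singular vector $v$ of maximal weight $\mu<\lambda$ in a proper submodule, then finish with condition (e) of Definition~\ref{2.6}. The precautions you add are legitimate and go beyond the paper's sketch, which derives the identity and then simply asserts $(\lambda,l_k\alpha_{i_k})=0$. Since $c$ is not scalar on $M_\mu$, $v$ should be taken in a $c$-eigenspace of $N_\mu$. Repeated real indices contribute $(\alpha_i,\alpha_i)=2d_i>0$.

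However, the decisive inequality is not established, and the route you sketch toward it is partly wrong. First, your identity has a sign error. The correct expansion gives
\[
(\lambda,\beta)=\sum_{p<q}l_pl_q(\alpha_{i_p},\alpha_{i_q}),
\]
as in the paper. With this sign the left side is $\ge 0$, while every pair with $i_p\ne i_q$, or with $i_p=i_q\in I^{\text{im}}$, contributes $\le 0$ on the right. So the ``wrong direction'' problem is an artifact of the sign, and the only genuine obstruction is a repeated real index. As you wrote it, a nonnegative plus a nonpositive quantity equal to zero forces nothing.

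Second, if ``reduce to $\mu$ dominant via $W$-invariance'' means replacing $\mu$ by a $W$-conjugate, that is not legitimate. The identity is attached to the singular vector $v$, and the correction $\sum_k l_k(l_k-1)(\alpha_{i_k},\alpha_{i_k})$ is not $W$-invariant. It is also unnecessary. Since $Uv\subseteq M$ is a highest weight module in $O_{int}$, the $\mathfrak{sl}_2$ computation in the proposition after \ref{4.5} gives $\langle h_i,\mu\rangle\ge 0$ for $i\in I^{\text{re}}$. Condition (d) gives it for $i\in I^{\text{im}}$. Hence $\mu\in P^{+}$ outright.

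Third, ``strictly positive unless $\beta$ is degenerate'' is exactly what has to be proved, and the Kac--Moody argument does not carry over verbatim. We have $(\rho,\alpha_i)\le 0$ for $i\in I^{\text{im}}$ and the correction term is $\le 0$, so neither $|\lambda+\rho|^2-|\mu+\rho|^2$ nor the correction is termwise positive. What works is a regrouping. Let $\beta_{\text{re}}$ be the sum of the $l_k\alpha_{i_k}$ with $i_k$ real (so $l_k=1$), let $\beta_{\text{im}}$ be the sum over imaginary $i_k$, and let $J$ be the set of imaginary positions $k$. Then one checks
\begin{multline*}
2(\lambda,\beta)-2\sum_{p<q}l_pl_q(\alpha_{i_p},\alpha_{i_q})=(\lambda+\rho,\beta_{\text{re}})+(\mu+\rho,\beta_{\text{re}})\\
+2(\lambda,\beta_{\text{im}})-(\beta_{\text{re}},\beta_{\text{im}})-2\sum_{p<q,\ p,q\in J}l_pl_q(\alpha_{i_p},\alpha_{i_q}).
\end{multline*}
For $i\in I^{\text{re}}$ we have $(\lambda+\rho,\alpha_i)=d_i(\langle h_i,\lambda\rangle+1)>0$, and likewise for $\mu\in P^+$. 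The remaining three terms are $\ge 0$. So vanishing forces $\beta_{\text{re}}=0$, $(\lambda,\alpha_{i_k})=0$ for all $k$, and $(\alpha_{i_p},\alpha_{i_q})=0$ for $p\neq q$.

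This applies to every monomial in the expansion of $v$, since they share the eigenvalue and hence the identity. Your final paragraph then correctly kills each of them by (e). In fact the rightmost factor already acts on $v_\lambda$, so no commutation is needed. Without this computation, your proposal, like the paper's sketch, leaves open singular vectors involving repeated real indices.
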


\begin{proposition}
Let $V=\U v_{\lambda}\in O_{int} $ be a highest weight module with highest weight $\lambda\in P$, then $\lambda\in P^{+}$.
\end{proposition}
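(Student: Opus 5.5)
We must show: if $V=\U v_\lambda$ is a highest weight module lying in $O_{int}$, then $\langle h_i,\lambda\rangle\ge 0$ for all $i\in I$, and $\langle h_i,\lambda\rangle\in 2\Z$ whenever $i\in I^{\rm re}\cap I_{\bar 1}$. We treat imaginary and real indices separately.

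\emph{Imaginary indices.} For $i\in I^{\rm im}$, condition (d) of Definition~\ref{2.6} applied to $\mu=\lambda\in \text{wt}(V)$ gives $\langle h_i,\lambda\rangle\ge 0$ at once.

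\emph{Real indices: reduction to a rank-one computation.} Fix $i\in I^{\rm re}$ and put $n=\langle h_i,\lambda\rangle$. By condition (c), $b_i$ acts locally nilpotently on $V$, so there is a smallest $N\ge 1$ with $b_i^{N}v_\lambda=0$. Then $b_i^{N-1}v_\lambda\neq 0$ and $a_ib_i^{N}v_\lambda=0$. We compute $a_ib_i^{N}v_\lambda$ using Proposition~\ref{2.3}(b) with $x=a_i^{N}\in F$ (so $x^-=b_i^N$), together with $a_iv_\lambda=0$:
\[
a_ib_i^{N}v_\lambda=\frac{K_i\,\varrho^{i}(a_i^N)^- -(-1)^{p(i)N-p(i)}\varrho_{i}(a_i^N)^-K_i^{-1}}{1-(-1)^{p(i)}q_i^2}\,v_\lambda .
\]
By the twisted Leibniz rule (or Lemma~\ref{1.7}(a)), both $\varrho_i(a_i^N)$ and $\varrho^i(a_i^N)$ are explicit scalar multiples of $a_i^{N-1}$, namely sums $\sum_{s=0}^{N-1}((-1)^{p(i)}q_i^{2})^{s}$ up to normalization. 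Moving $K_i^{\pm1}$ past $b_i^{N-1}$ produces $q_i^{\pm(n-2(N-1))}$. The computation therefore reduces to
\[
0=c_N(\lambda)\,b_i^{N-1}v_\lambda,
\]
where $c_N(\lambda)$ is a scalar proportional to
\[
\frac{q_i^{\,n-N+1}-(-1)^{p(i)(N-1)}\,q_i^{-(n-N+1)}\cdot(\text{correction})}{1-(-1)^{p(i)}q_i^2}.
\]
After simplification, this is, up to a unit, a super quantum integer of the shape $[N]_i\,\bigl(((-1)^{p(i)}q_i)^{\,n-N+1}-q_i^{-(n-N+1)}\bigr)$.

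\emph{Conclusion from the scalar.} Since $b_i^{N-1}v_\lambda\neq 0$, we must have $c_N(\lambda)=0$. Because $[N]_i\neq 0$ for $N\ge1$, the vanishing forces
\[
((-1)^{p(i)}q_i)^{\,n-N+1}=q_i^{-(n-N+1)}
\]
as elements of $\Q(q)$. Comparing powers of $q$ gives $n=N-1\ge 0$. If $p(i)=1$, the sign $(-1)^{n-N+1}$ must also equal $1$, which holds trivially once $n=N-1$. Consequently, the evenness condition for $i\in I^{\rm re}\cap I_{\bar1}$ does not follow from the leading-power comparison alone.

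\emph{Main obstacle: parity for odd real $i$.} The delicate step is the sign bookkeeping in the factor $(-1)^{p(i)N-p(i)}$ coming from Proposition~\ref{2.3}(b), combined with the signs inside the Leibniz expansion of $\varrho^i(a_i^N)$. My plan is to redo that expansion carefully, as in the computation used for Proposition~2.5(a), where evenness of $\langle h_i,\lambda\rangle$ was exactly what made the two terms cancel. The aim is to show that the true scalar is
\[
\frac{((-1)^{p(i)}q_i)^{n}\cdot(\ast)-q_i^{-n}\cdot(\ast)}{\dots},
\]
with an extra sign $(-1)^{p(i)n}$ that must equal $1$, forcing $n$ even. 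If that sign is absent, the alternative is to use the bar-consistency assumption $d_i\equiv p(i)\pmod 2$: then $q_i$ is an odd power of $q$, and the relation $(-1)^{p(i)(N-1)}q_i^{\,n-N+1}=q_i^{-(n-N+1)}$ should be read together with the requirement that $[N]_i$ be nonzero of the correct parity, yielding $N-1$ even. Once these signs are settled, $\lambda\in P^+$ follows.
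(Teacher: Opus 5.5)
Your handling of imaginary indices and your reduction for real $i$ (take the minimal $N$ with $b_i^N v_\lambda=0$, then compute $a_ib_i^Nv_\lambda$ via Proposition~\ref{2.3}(b)) follow the paper's route, and they correctly give $\langle h_i,\lambda\rangle=N-1\ge 0$. The gap is the evenness condition for $i\in I^{\rm re}\cap I_{\bar 1}$, which your proposal does not prove. The reason is that your ``simplified'' scalar is wrong.

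By the twisted Leibniz rules, $\varrho_i(a_i^N)=\varrho^i(a_i^N)=C_N\,a_i^{N-1}$ with the same factor $C_N=\sum_{s=0}^{N-1}((-1)^{p(i)}q_i^2)^s\neq 0$. So nothing compensates the factor $(-1)^{p(i)N-p(i)}$ in Proposition~\ref{2.3}(b). Moving $K_i^{\pm1}$ onto the weight vectors, with $n=\langle h_i,\lambda\rangle$, gives
\[
a_ib_i^Nv_\lambda=\frac{C_N}{1-(-1)^{p(i)}q_i^2}\Bigl(q_i^{\,n-2(N-1)}-(-1)^{p(i)(N-1)}q_i^{-n}\Bigr)\,b_i^{N-1}v_\lambda .
\]
The sign on the $q_i^{-n}$ term depends on $N-1$, not on $n-N+1$ as in your $((-1)^{p(i)}q_i)^{n-N+1}-q_i^{-(n-N+1)}$. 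Your intermediate expression with $(-1)^{p(i)(N-1)}$ was already correct, with the ``correction'' equal to $1$. The error is in the step that moved the sign into the base of the power. Vanishing means $q_i^{2(n-N+1)}=(-1)^{p(i)(N-1)}$ in $\Q(q)$. This forces $n=N-1$ and, when $p(i)=1$, $N-1$ even, so $n$ is even. This is exactly the paper's conclusion.

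Your fallback via bar-consistency is not a valid argument. $[N]_i\neq 0$ for every $N\ge 1$ regardless of parity, and $d_i\equiv p(i)\pmod 2$ plays no role here. Your formula would in fact allow odd $n$, which is false. For $p(i)=1$ and $n=1$ the scalar above is a nonzero multiple of $q_i^{3-2N}-(-1)^{N-1}q_i^{-1}$, which never vanishes. Inductively $b_i^Nv_\lambda\neq 0$ for all $N$, so $b_i$ is not locally nilpotent. As written, your proposal establishes only $\langle h_i,\lambda\rangle\ge 0$ for all $i$, not $\lambda\in P^+$.
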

\begin{proof}
If $i\in I^{im}$ , by the definition of $O_{int}$, we have $\langle h_{i},\lambda\rangle\ge0$.

If $i\in I^{re}$, since $b_{i}$ is locally nilpotent, we can find an integer $r\ge1$, such that $b_{i}^{(r)}v_{\lambda}=0$ and $b_{i}^{(r-1)}v_{\lambda}\neq 0$. So
\begin{align*}
0=a_{i}b_{i}^{(r)}v_{\lambda}=\frac{q_{i}^{-(r-1)+\langle h_{i},\lambda\rangle}-(-1)^{p(i)(r-1)}q_{i}^{(r-1)-\langle h_{i},\lambda\rangle}}{1-(-1)^{p(i)}q_{i}^{2}}b_{i}^{(r-1)}v_{\lambda}.
\end{align*}
Hence $r-1=\langle h_{i},\lambda\rangle\ge 0$ and if $p(i)=1$, $r-1$ and $\langle h_{i},\lambda\rangle$ are even.
\end{proof}
\vskip 3mm
\begin{remark}\label{4.7}
Every simple module in $O_{int}$ is a highest weight module with highest weight $\lambda\in P^{+}$.
\end{remark}
\vskip 3mm

\begin{lemma}
Let $M$ be a $\U$-module in the category $O_{int}$ and let $V=\U v_{\lambda}$ be a submodule of $M$ with highest weight vector $v_{\lambda}$ of highest weight $\lambda$. Then we have
 $$M\cong V\oplus M\slash V.$$
\end{lemma}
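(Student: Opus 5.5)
We plan to prove the lemma in the form "$V$ is a direct summand of $M$" and argue with the quantum Casimir. First, by the proposition preceding Remark~\ref{4.7}, $V\in O_{int}$ forces $\lambda\in P^{+}$, and then Proposition~\ref{4.5} shows that $V$ is irreducible. By Zorn's lemma we choose a submodule $W\subset M$ that is maximal among submodules with $W\cap V=0$. The image of $V$ in $\overline{M}=M/W$ is an essential submodule isomorphic to $V$, and $\overline{M}\in O_{int}$. It then suffices to show $\overline{M}=V$, because that gives $M=V\oplus W$ and $W\cong M/V$.

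Suppose $\overline{M}\neq V$. By condition (b) of Definition~\ref{2.6}, the weights of $\overline{M}/V$ lie in finitely many cones $\lambda_j-Q_{+}$. We pick a weight $\mu$ of $\overline{M}/V$ that is maximal with respect to $\le$, and a nonzero $m\in\overline{M}_{\mu}$ with $m\notin V$. Maximality gives $a_{il}m\in V_{\mu+l\alpha_i}$ for all $(i,l)\in I^{\infty}$. If $\mu\not<\lambda$, these weight spaces of $V$ vanish, so $m$ is already a highest weight vector.

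In the remaining case we modify $m$ using the Casimir operator $c$, given by $c(x)=q^{f(\nu)}\Omega x$ for $x$ of weight $\nu$. By the proposition on $\Omega$, $c$ commutes with $U^{0}$ and with every $a_{il}$ and $b_{il}$, up to the explicit factors $q^{l(l-1)(\alpha_i,\alpha_i)}$ computed before Proposition~\ref{4.5}. In particular $c$ preserves each finite-dimensional weight space $\overline{M}_{\mu}$ and the subspace $V_{\mu}$. On $V_{\mu}$, writing $\lambda-\mu=\sum l_k\alpha_{i_k}$, $c$ acts by the scalar $q^{f(\lambda)+\sum l_k(l_k-1)(\alpha_{i_k},\alpha_{i_k})}$. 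On the other hand, $\Omega m=m+\sum_{b\neq1}(\pm)S(b^-)b^{*+}m$, and each term $b^{*+}m$ with $b\neq 1$ lies in $V$, so $c\,m\equiv q^{f(\mu)}m \pmod{V_{\mu}}$. If $q^{f(\mu)}$ differs from the eigenvalue of $c$ on $V_{\mu}$, linear algebra on the finite-dimensional space $\overline{M}_{\mu}$ lets us replace $m$ by its component $m'$ in the generalized $q^{f(\mu)}$-eigenspace. This $m'$ is still nonzero modulo $V$ and still has weight $\mu$. For each $(i,l)$, $a_{il}m'\in V_{\mu+l\alpha_i}$, and by the twisted commutation of $c$ with $a_{il}$ it is a generalized eigenvector with eigenvalue different from the scalar by which $c$ acts on $V_{\mu+l\alpha_i}$. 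Hence $a_{il}m'=0$, and $m'$ is a highest weight vector of weight $\mu$.

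The main obstacle is the eigenvalue separation, that is, showing $f(\mu)\neq f(\lambda)+\sum l_k(l_k-1)(\alpha_{i_k},\alpha_{i_k})$ when $\mu<\lambda$ and $\lambda,\mu$ arise as above. We plan to rewrite this equality as $2(\lambda,\sum l_k\alpha_{i_k})=2\sum_{p<q}l_pl_q(\alpha_{i_p},\alpha_{i_q})$, as in the computation before Proposition~\ref{4.5}. We then use $\lambda\in P^{+}$ (so each $(\lambda,\alpha_{i_k})\ge0$), $a_{ij}\le0$ for $i\neq j$, and conditions (d) and (e) of Definition~\ref{2.6} applied to $\mu+\ldots$. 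The aim is to force every term to vanish, which would mean $b_{i_kl_k}$ kills the relevant vectors and $V_{\mu}=0$. In that degenerate case $a_{il}m\in V_{\mu+l\alpha_i}$ must be handled directly, and we expect this case analysis (real versus imaginary $i_k$) to be the delicate point.

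Once $m'$ is a highest weight vector, $Um'$ is a highest weight module in $O_{int}$. Its highest weight $\mu$ therefore lies in $P^{+}$, and $Um'$ is irreducible by Proposition~\ref{4.5}. Since $m'\notin V$ and $V$ is irreducible, $Um'\cap V=0$, contradicting the essentiality of $V$ in $\overline{M}$. Hence $\overline{M}=V$.
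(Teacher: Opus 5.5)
\textbf{Verdict: there is a genuine gap.} The paper simply quotes Lemma~5.9 of \cite{KK20} to get a retraction $t\colon M\to V$ with $t\circ\iota=\mathrm{id}_V$. Your self-contained Casimir route is a different approach, and its reduction steps are sound: the Zorn complement $W$, essentiality of $V$ in $\overline{M}=M/W$, the maximal weight $\mu$ of $\overline{M}/V$, the congruence $\Omega m\equiv m \pmod V$, and the final contradiction all hold up. The gap is the eigenvalue separation. It carries the entire content of the argument, you leave it open, and as you formulate it it is incorrect, for two reasons.

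First, $c$ is not a scalar on $V_\mu$. Since $c\,b_{il}=q^{l(l-1)(\alpha_i,\alpha_i)}b_{il}\,c$, the eigenvalue on $b_{i_1l_1}\cdots b_{i_rl_r}v_\lambda$ depends on the multiset $\{(i_k,l_k)\}$, not only on $\lambda-\mu$. For example, take $i\in I^{\mathrm{im}}$ with $a_{ii}<0$ and $\langle h_i,\lambda\rangle>0$. Then the nonzero vectors $b_{i2}v_\lambda$ and $b_{i1}^2v_\lambda$ in $V_{\lambda-2\alpha_i}$ have eigenvalues $q^{f(\lambda)+2(\alpha_i,\alpha_i)}\neq q^{f(\lambda)}$.

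Second, coincidences of eigenvalues really occur.
\begin{itemize}
\item If $i\in I^{\mathrm{im}}$, $\langle h_i,\lambda\rangle=0$ and $\mu=\lambda-\alpha_i$ (a dominant weight), then $f(\mu)=f(\lambda)-2(\lambda,\alpha_i)=f(\lambda)$. So the Casimir does not prevent $a_{i1}m'$ from being a nonzero multiple of $v_\lambda$.
\item If $i\in I^{\mathrm{re}}\cap I_{\bar 0}$, $\langle h_i,\lambda\rangle=1$ and $\mu=\lambda-2\alpha_i$, then again $f(\mu)=f(\lambda)$, and $a_im'\in V_{\lambda-\alpha_i}\neq 0$ is not forced to vanish.
\end{itemize}

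To close the argument you need the analysis that your proposal only announces as ``the delicate point''.

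First, prove $\mu\in P^{+}$ \emph{before} separating eigenvalues. The image of $m$ in $\overline{M}/V$ is a highest weight vector of a module in $O_{int}$, so the proposition preceding Remark~\ref{4.7} applies. This excludes the real-root coincidence above.

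Next, take any $\mathbb{N}I^{\infty}$-type $\beta=\sum_k l_k\alpha_{i_k}$ of $\lambda-\mu$. The coincidence $f(\mu)=f(\lambda)+\sum_k l_k(l_k-1)(\alpha_{i_k},\alpha_{i_k})$ is equivalent to $\sum_k l_k\bigl[(\lambda,\alpha_{i_k})+(\mu,\alpha_{i_k})+l_k(\alpha_{i_k},\alpha_{i_k})\bigr]=0$.
\begin{itemize}
\item Each real summand is positive once $\mu\in P^{+}$.
\item Each summand can be rewritten as $l_k\bigl[2(\lambda,\alpha_{i_k})-\sum_{p\neq k}l_p(\alpha_{i_p},\alpha_{i_k})\bigr]$, which is $\ge 0$ when $i_k$ is imaginary.
\end{itemize}
So equality forces all $i_k$ to be imaginary, mutually orthogonal, and orthogonal to $\lambda$. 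In that case every monomial $b_{j_1l_1}\cdots b_{j_sl_s}v_\lambda$ with $s\ge 1$ built from such indices vanishes, because already $b_{j_sl_s}v_\lambda=0$.

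Hence the only surviving situation, which the $a_{il}m'$ step must handle, is $\mu=\lambda-l\alpha_i$ with $i\in I^{\mathrm{im}}$ and $\langle h_i,\lambda\rangle=0$. There $\langle h_i,\mu\rangle=-la_{ii}$, and condition (f) of Definition~\ref{2.6} (not (d) or (e)) gives $a_{il}\overline{M}_\mu=0$. Without this case analysis your argument does not prove the statement.
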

\begin{proof}
We have a short exact sequence
\[0 \to V \xrightarrow{\iota} M \to M/V \to 0,\]
where $\iota$ is an embeding.
According to the lemma $5.9$ in \cite{KK20}, we have a $\U$-module homomorphism $t: M\to V$, such that $t \circ\iota=id_{V}$
which implies the short exact sequence is split, hence $M\cong V\oplus M\slash V.$
\end{proof}

\vskip 3mm
\begin{proposition}
Any integrable module in $O_{int}$ is a direct sum of irreducible highest weight modules.
\end{proposition}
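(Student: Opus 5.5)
We follow the standard argument for semisimplicity of $O_{int}$, using the preceding lemma (a highest weight submodule splits off) together with Proposition~\ref{4.5} and Remark~\ref{4.7}. Let $M\in O_{int}$ be nonzero. By condition (b) of Definition~\ref{2.6}, $\text{wt}(M)\subset\bigcup_{j=1}^{r}(\lambda_j-Q_+)$, so every nonempty subset of $\text{wt}(M)$ has a maximal element for the partial order $\ge$. Let $\lambda$ be a maximal weight of $M$ and $0\neq v\in M_\lambda$. Then $a_{il}v=0$ for all $(i,l)\in I^{\infty}$, since $\lambda+l\alpha_i\notin\text{wt}(M)$. Hence $V=\U v$ is a highest weight submodule with highest weight $\lambda$.

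First we check that $V$ lies in $O_{int}$. Conditions (a)--(f) of Definition~\ref{2.6} pass to submodules, since each concerns the weights of, or the action on, elements of $M$. The proposition preceding Remark~\ref{4.7} then gives $\lambda\in P^{+}$, and Proposition~\ref{4.5} shows that $V\cong V(\lambda)$ is irreducible. The preceding lemma gives $M\cong V\oplus M/V$. Moreover $M/V$ is isomorphic to a submodule (a complement) of $M$, so $M/V\in O_{int}$ as well.

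Next we iterate while controlling the process, since $M$ may be infinite dimensional. Fix a weight $\mu\in\text{wt}(M)$. Only weights $\lambda\ge\mu$ with $\lambda\le\lambda_j$ for some $j$ can be highest weights of summands contributing to $M_\mu$, and this set is finite. We organize the splitting weight by weight. Choose a maximal $\lambda$ and split off copies of $V(\lambda)$ for a basis of the space of highest weight vectors in $M_\lambda$. Since $\dim M_\lambda<\infty$, each step strictly lowers $\dim$ of the complement at weight $\lambda$. After finitely many steps no vector of weight $\lambda$ remains in the complement, and we pass to the next maximal weight.

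Concretely, we will use a Zorn's lemma argument. Consider the families $\{V_k\}$ of irreducible highest weight submodules with $\lambda_k\in P^{+}$ whose sum is direct. Take a maximal family, and put $N=\bigoplus V_k$. We need to show $N=M$. If $N\neq M$, pick a maximal weight $\lambda$ of $M/N$ (using condition (b)) and lift a weight vector $\bar v\in (M/N)_\lambda$ to $v\in M_\lambda$. The difficulty is that $v$ need not be a highest weight vector in $M$, because $a_{il}v$ may land in $N$. This is the main obstacle.

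To handle it, I would first show that $N$ splits off with a complement in $O_{int}$, and then find the missing highest weight vector inside that complement. Because $N$ involves only finitely many summands of any fixed weight, it can be handled weight-space by weight-space. One route is to apply the splitting lemma repeatedly to the finitely many summands meeting the weights $\ge\mu$. This shows that $M_{\ge\mu}$-truncations decompose, and the decompositions for decreasing $\mu$ are compatible, which forces $N_\nu=M_\nu$ for every $\nu$. An alternative route is to use the Casimir-type operator $c$, which acts on $V(\lambda)$ by a scalar determined by $\lambda$, to separate generalized eigenspaces. That would give a decomposition of $M$ into finitely many blocks at each weight, to each of which the splitting lemma applies finitely often.
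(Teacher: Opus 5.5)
Your setup is sound. Maximal weights exist by condition (b) of Definition~\ref{2.6}. A maximal-weight vector generates a highest weight submodule in $O_{int}$, which is irreducible with highest weight in $P^{+}$ and splits off. Zorn's lemma then gives a maximal family with $N=\bigoplus V_k$. But the argument stops exactly at the step you call the main obstacle, and neither proposed route resolves it. The claim that the ``$M_{\ge\mu}$-truncations'' decompose compatibly as $\mu$ decreases is given no proof, and complements are not canonical, so compatibility is not automatic. More to the point, you never exhibit a highest weight vector generating a submodule that meets $N$ trivially, which is what contradicting maximality requires. You also do not need $N$ itself to split off. The Casimir route fails in this setting. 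The computation before Proposition~\ref{4.5} gives $c(b_{il}m)=q^{l(l-1)(\alpha_i,\alpha_i)}b_{il}c(m)$ for $i\in I^{\mathrm{im}}$. So as soon as some imaginary $i$ has $a_{ii}<0$, $c$ is neither $U$-linear nor a scalar on $V(\lambda)$, and its generalized eigenspaces need not be submodules.

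The missing step can be supplied within your framework.
\begin{enumerate}
\item Fix $\mu$ with $N_\mu\neq M_\mu$, and let $F$ be the sum of the $V_k$ with $\lambda_k\ge\mu$. There are finitely many such $V_k$, since their highest weight vectors are linearly independent in the finitely many finite-dimensional spaces $M_\nu$ with $\mu\le\nu\le\lambda_j$.
\item Apply the splitting lemma finitely often, each time to the projection of the next $V_k$ into the current complement. This gives $M=F\oplus C$ with $C$ a submodule.
\item Since $(V_k)_\nu\neq0$ forces $\lambda_k\ge\nu$, one has $N_\nu=F_\nu$ for all $\nu\ge\mu$. Hence $C_\mu\neq0$.
\item Take $\lambda$ maximal among the weights of $C$ that are $\ge\mu$, and $0\neq v\in C_\lambda$. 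Then $v$ is a highest weight vector because $C$ is a submodule, and $\U v$ is irreducible.
\item Finally $\U v\cap N=0$: otherwise $v\in N_\lambda=F_\lambda$, contradicting $F\cap C=0$. This contradicts maximality.
\end{enumerate}
The paper avoids Zorn and the infinite sum altogether. For each $m$, $U^{\ge0}m$ is finite dimensional by conditions (a) and (b). Induction on its dimension with the splitting lemma shows every cyclic module $\U m$ is a finite direct sum of irreducibles. Thus $M=\sum_m\U m$ is a sum of simple submodules, hence a direct sum of a subfamily (Curtis--Reiner, Proposition~3.12), and Remark~\ref{4.7} identifies the summands. With that lemma your Zorn argument also closes at once: for $m\notin N$, some irreducible summand of $\U m$ is not contained in $N$, and therefore meets it trivially.
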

\begin{proof}
For any $M$ in the category $O_{int}$, if $M=\U V$ and  $V$ is a finite dimensional $U^{\ge0}$-submodule. By induction on the dimension of $V$, we can conclude that $M$ is a direct sum of irreducible submodules. So, for each $m\in M$,  $\U m$ is completely irreducible. Hence $M=\sum_{m\in M}\U m$ can be written in the sum of irreducible  $\U$-submodules. From Proposition 3.12 in \cite{CR} and Remark \ref{4.7}, M is a direct sum of irreducible highest weight modules with highest weight $\lambda\in P^{+}$.
\end{proof}

\vskip 3mm
We define $E_{\lambda}$ to be the set of elements of the form
 $\alpha = \sum_{k=1}^{r} a_{k} \alpha_{i_k}$ $(a_{k} \in \Z_{>0})$
 satisfying the following conditions:

 \begin{itemize}

\item[(i)] all $\alpha_{i_k}$ are {\it even} imaginary simple roots for $1 \le k \le r$,

\vskip 2mm

\item[(ii)] $(\alpha_{i_k}, \alpha_{i_l}) = 0$ for all $1 \le k, l  \le r$,

\vskip 2mm

\item[(iii)] $(\alpha_{i_k}, \lambda) = 0$ for all $1 \le k \le r$.

 \end{itemize}

 \vskip 2mm

 For an element $\alpha = \sum_{k=1}^{r} a_{k} \alpha_{i_k}  \in E_{\lambda}$, we define
 \begin{equation} \label{eq:signE}
 \begin{aligned}
 & d_{i}(\alpha) = \begin{cases}
 \#\{k \mid i_k = i \} \ \ & \ \text{if} \ i \notin I^{\text{iso}}, \\
 \sum_{i_k = i} a_{k} \ \ & \ \text{if} \ i \in I^{\text{iso}},
 \end{cases} \\
 & \epsilon(\alpha) =
 \prod_{i \notin I^{\text{iso}}} (-1)^{d_{i}(\alpha)} \prod_{i \in I^{\text{iso}}} \phi(d_i(\alpha)),
 \end{aligned}
 \end{equation}
 where $\phi(n)$ is defined by $\prod_{k=1}^{\infty} (1 - q^k) = \sum_{n=0}^{\infty} \phi(n) q^n$.

 \vskip 2mm

 On the other hand, we define $O_{\lambda}$ to be the set of elements of the form
 $\beta = \sum_{l=1}^{s} b_{l} \alpha_{i_l}$ $(b_{l} \in \Z_{>0})$
  satisfying the following conditions:
 \begin{itemize}

\item[(i)] all $\alpha_{i_l}$ are {\it odd} imaginary simple roots for $1 \le l \le s$,

\vskip 2mm

\item[(ii)] $(\alpha_{i_k}, \alpha_{i_l}) = 0$ for all $1 \le k, l  \le s$,

\vskip 2mm

\item[(iii)] $(\alpha_{i_l}, \lambda) = 0$ for all $1 \le l \le s$.

 \end{itemize}

 \vskip 2mm

 For an element $\beta =  \sum_{l=1}^{s} b_{l} \alpha_{i_l}  \in O_{\lambda}$, we define
 \begin{equation} \label{eq:signO}
 \begin{aligned}
 & d_{i}(\beta) = \begin{cases}
 \#\{l \mid i_l = i \} \ \ & \ \text{if} \ i \notin I^{\text{iso}}, \\
 \sum_{i_l = i} b_{l} \ \ & \ \text{if} \ i \in I^{\text{iso}},
 \end{cases} \\
 & \epsilon(\beta) =
 \prod_{i \notin I^{\text{iso}}} (-1)^{d_{i}(\beta)} \prod_{i \in I^{\text{iso}}}
 c(\lambda - d_{i}( \beta) \alpha_{i}).
 \end{aligned}
 \end{equation}

\vskip 2mm

Finally, we define $F_{\lambda}$ to be the elements of the form $s = \alpha + \beta$
with $\alpha \in E_{\lambda}$, $\beta \in O_{\lambda}$ such that
$(\alpha, \beta) = 0$.
Set
$\epsilon(s) = \epsilon(\alpha) \epsilon(\beta)$,
and define
$$S_{\lambda} = \sum_{s \in F_{\lambda}} \epsilon(s) e^{-s}.$$

\vskip 2mm

\begin{theorem} \label{thm:main}
{\rm
Let  $V(\lambda)$ be the irreducible highest weight module with
a highest weight $\lambda \in P^{+}$.
Then the character of $V(\lambda)$ is given by the formula
\begin{equation} \label{eq:main}
(e^{\rho} R) \text{ch} V(\lambda) = \sum_{w \in W} \epsilon(w) e^{w(\lambda + \rho)} w(S_{\lambda}).
\end{equation}
}
\end{theorem}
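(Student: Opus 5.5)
The plan is to reduce the character formula to the known Weyl--Kac type formula for (classical) Borcherds--Bozec superalgebras proved in \cite{FHKS}, by a classical-limit / deformation argument. I would not attempt a direct Verma-module resolution in the quantum setting.

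\textbf{Step 1: construct an $\mathcal{A}$-form of $V(\lambda)$.} Let $\mathcal{A}=\Z[q,q^{-1}]$ and let $U_{\mathcal{A}}$ be the $\mathcal{A}$-subalgebra generated by the divided powers $a_i^{(n)},b_i^{(n)}$ for $i\in I^{\text{re}}$, the elements $a_{il},b_{il}$ for $i\in I^{\text{im}}$, and suitable $K$-binomials. Set $V(\lambda)_{\mathcal{A}}=U^-_{\mathcal{A}}v_\lambda$. The key points are:
\begin{itemize}
\item[(a)] each weight space $V(\lambda)_{\mathcal{A},\mu}$ is a free $\mathcal{A}$-module whose rank equals $\dim_{\Q(q)}V(\lambda)_\mu$;
\item[(b)] specialization $q\to 1$ produces a module $\overline{V}$ over the enveloping superalgebra of the Borcherds--Bozec superalgebra $\g$, with the same weight multiplicities.
\end{itemize}
Freeness follows since $\mathcal{A}$ is a PID and the weight spaces are finitely generated and torsion-free.

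\textbf{Step 2: show that $\overline{V}$ is the irreducible $\g$-module $V^{\mathrm{cl}}(\lambda)$.} The specialized module $\overline{V}$ is a highest weight module of highest weight $\lambda$, so it is a quotient of the classical Verma module. It inherits the integrability conditions of Definition \ref{2.6}: local nilpotence of $b_i$, vanishing of $b_{il}$ when $\langle h_i,\mu\rangle=0$, and so on. These hold because the relevant identities, namely the conditions in the proposition on $V(\lambda)$ and the higher Serre relations (the theorem on $F_{i,j,m,n,\mathbf c}$), are defined over $\mathcal{A}$. The classical analogue of Proposition \ref{4.5} (every integrable highest weight module with $\lambda\in P^+$ is irreducible) then gives $\overline{V}\cong V^{\mathrm{cl}}(\lambda)$. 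Consequently $\operatorname{ch}V(\lambda)=\operatorname{ch}V^{\mathrm{cl}}(\lambda)$.

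\textbf{Step 3: invoke \cite{FHKS}.} The character formula of \cite{FHKS} for $V^{\mathrm{cl}}(\lambda)$ is exactly \eqref{eq:main}, with the same $E_\lambda$, $O_\lambda$, $F_\lambda$, $\epsilon$, and $S_\lambda$. Combining this with Step 2 finishes the proof.

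\textbf{Alternative route.} As a check independent of the classical theory, I would also outline a direct argument. Write $\operatorname{ch}V(\lambda)$ as a combination of Verma characters $\operatorname{ch}M(\mu)=e^\mu/R$, using the Casimir element of Section 4: $c$ acts on $M(\mu)$ by $q^{f(\mu)}$, so only $\mu$ with the appropriate $f$-value can occur. Then use $W$-anti-invariance of $e^\rho R\operatorname{ch}V(\lambda)$, which comes from the $U_q(\mathfrak{sl}_2)$-integrability of Proposition \ref{4.5} and the $W$-invariance of $(\ ,\ )$. Finally, identify the dominant terms $\lambda+\rho-s$ by an Euler-characteristic computation along imaginary directions. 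This last identification is where the factors $\phi(n)$ for isotropic indices and the odd sign $c(\lambda-d_i\alpha_i)$ arise.

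\textbf{Main obstacle.} The hardest step is Step 1(b) combined with Step 2: proving that specialization preserves weight multiplicities and does not lose irreducibility. The difficulty is the imaginary generators $a_{il}$, where $(a_{il},a_{il})=(1-(-1)^{p(li)}q_i^{2l})^{-1}$ has a pole at $q=1$ when $i\in I^{\text{iso}}$ or $p(li)=0$. My first attempt would be to rescale the imaginary generators by $(1-(-1)^{p(li)}q_i^{2l})$ so that the Shapovalov-type form on $V(\lambda)_{\mathcal{A}}$ becomes $\mathcal{A}$-valued with a nondegenerate specialization. Nondegeneracy of the specialized form forces $\overline{V}$ to be irreducible, which would complete Step 2.
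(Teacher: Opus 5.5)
The paper states Theorem~\ref{thm:main} with no proof; it is the last item before the end of the document. Your reduction to \cite{FHKS} therefore has to stand on its own. As written it breaks at Steps 1(b)--2, and the cause is the odd generators.

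\textbf{The specialization at $q=1$ fails for odd generators.} In this paper's normalization, for $p(li)=1$ the relation is $a_{il}b_{il}+b_{il}a_{il}=(K_i^l-K_i^{-l})/(1+q_i^{2l})$. The denominator is a unit at $q=1$, while the numerator vanishes there on every weight space. So in the limit this anticommutator acts by $0$, not by a multiple of $h_i$. Concretely, take $i\in I^{\mathrm{re}}\cap I_{\bar 1}$ with $n=\langle h_i,\lambda\rangle>0$ (even). Then $V(\lambda)_{\mathcal A,\lambda-\alpha_i}=\mathcal A\,b_iv_\lambda$, and $a_ib_iv_\lambda=\frac{q_i^{n}-q_i^{-n}}{1+q_i^{2}}\,v_\lambda$. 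This coefficient lies in $\mathcal A$ and vanishes at $q=1$, so $\bar b_i\bar v_\lambda$ is a nonzero singular vector of $\overline V$.

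A second problem comes from your divided powers. Here $[2]_i=-(q_i-q_i^{-1})$ vanishes at $q=1$, so $\bar b_i^{\,2}=\overline{[2]_i}\,\bar b_i^{(2)}=0$ on $\overline V$. In $V^{\mathrm{cl}}(\lambda)$, however, $f_i^2v_\lambda\neq0$, since the $\mathfrak{osp}(1|2)$-string through $v_\lambda$ has length $n+1\ge 3$. Hence $\overline V$ is not a $U(\mathfrak g)$-module via $\bar b_{il}\mapsto f_{il}$. The classical irreducibility theorem cannot be applied to it, and the identification $\overline V\cong V^{\mathrm{cl}}(\lambda)$ has no basis.

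\textbf{Your ``main obstacle'' paragraph misdiagnoses this.}
\begin{itemize}
\item The pole of $(a_{il},a_{il})$ for $p(li)=0$ is the standard one, as for Lusztig's $(\theta_i,\theta_i)$. It is exactly what makes $\frac{K_i^l-K_i^{-l}}{1-q_i^{2l}}$ specialize to $-h_i$.
\item Isotropy is irrelevant: for $i\in I^{\mathrm{iso}}$ with $p(li)=1$ there is no pole.
\item Rescaling by $1-(-1)^{p(li)}q_i^{2l}$ goes the wrong way. It turns the even commutators into $K_i^l-K_i^{-l}\to 0$ and leaves the odd ones degenerate.
\end{itemize}

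\textbf{What a repair needs.} Use the opposite renormalization: multiply only the generators with $p(li)=1$ by $\frac{1+q_i^{2l}}{1-q_i^{2l}}$. This factor is not in $\mathcal A$, so you must work over the discrete valuation ring $\mathbb{Q}[q]_{(q-1)}$ and drop divided powers for odd $i$. This also repairs a second error: $\mathbb{Z}[q,q^{-1}]$ is not a PID, since $(2,q-1)$ is not principal, so your freeness argument for $V(\lambda)_{\mathcal A,\mu}$ does not work as stated. You then still have to check two things.
\begin{itemize}
\item The super $q$-binomials in the Serre relations, which the rescaling does not affect, must specialize to the coefficients of $\operatorname{ad}(e_i)^{1-la_{ij}}(e_{jl})$ for odd $e_i$. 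They do: for example, for $1-a_{ij}=3$ and $e_j$ even, both give $e_i^3e_j-e_i^2e_je_i-e_ie_je_i^2+e_je_i^3$.
\item The resulting presentation must match the one for which \cite{FHKS} proves the formula.
\end{itemize}
Only after this do the rank count and classical irreducibility give $\operatorname{ch}V(\lambda)=\operatorname{ch}V^{\mathrm{cl}}(\lambda)$.

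\textbf{The alternative route does not avoid this work.} By the computation in Section~4, $c(b_{il}m)=q^{l(l-1)(\alpha_i,\alpha_i)}b_{il}c(m)$. So $c$ is not central, and it does not act on $M(\mu)$ by the scalar $q^{f(\mu)}$ once some $a_{ii}<0$.
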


\end{document}